\documentclass[12pt]{article}	
\usepackage{graphicx}
\usepackage[utf8]{inputenc}
\usepackage{indentfirst,csquotes}

\usepackage[margin=2.5cm]{geometry}
\usepackage[english]{babel}
\usepackage{tikz}
\usepackage{fancyhdr} 
\usepackage{amsthm}
\usepackage{nicematrix}
\usepackage{xcolor} 
\usepackage{bbm}

\usepackage{mathrsfs}
\usepackage{hyperref}
\usepackage{amssymb}
\usepackage{mathtools}
\usepackage{amsmath}
\usepackage{latexsym}
\usepackage{esint}
\usepackage{wasysym}
\usepackage{pst-eucl}
\usepackage{comment}
\usepackage{orcidlink}
\usepackage{hyperref}
\usepackage{blkarray}
\usepackage{soul}
\hypersetup{
    colorlinks=true,
    linkcolor=blue,
    citecolor=teal
    }

\let\P\undefined 
\newcommand{\norm}[1]{\left\lVert#1\right\rVert}
\newcommand{\nor}[1]{\left\lvert#1\right\rvert}

\makeatletter
\newenvironment{customproof}[1]{%
  \par\pushQED{\qed}%
  \normalfont\topsep6pt \trivlist
  \item[\hskip\labelsep\itshape
    Proof of #1\@addpunct{.}]\ignorespaces
}{%
  \popQED\endtrivlist\@endpefalse
}
\makeatother

\newtheorem{Theorem}{Theorem}
\numberwithin{Theorem}{section}
\newtheorem{Proposition}[Theorem]{Proposition}
\newtheorem{Lemma}[Theorem]{Lemma}
\newtheorem{Corollary}[Theorem]{Corollary}
\theoremstyle{definition}
\newtheorem{Definition}[Theorem]{Definition}
\theoremstyle{remark}
\newtheorem{Remark}[Theorem]{Remark}
\numberwithin{equation}{section}

 \DeclareMathOperator{\C}{\mathcal{C}}
 \DeclareMathOperator{\R}{\mathbb{R}}

\DeclareMathOperator*{\supp}{supp}

\DeclareMathOperator*{\tr}{tr}
\newcommand*{\Mp}{\mathcal{M}_{K}^{+}}
\newcommand{\Mm}{\mathcal{M}_{K}^{-}}
\newcommand{\p}{\partial}

\newcommand{\osc}{\operatorname{osc}}

\newcommand{\P}{\mathcal{P}}
\newcommand{\eps}{\varepsilon}

\usepackage{dirtytalk}

\begin{document}
\title{A transmission problem arising from the two-phase Stefan problem}

\author{Fausto Ferrari$^{\orcidlink{0000-0002-2419-7018}}$
  \and
  Davide Giovagnoli$^{\orcidlink{0009-0005-3375-5199}}$
  \and 
  David Jesus$^{\orcidlink{0000-0001-7065-4737}}$
}

\author{Fausto Ferrari$^{\orcidlink{0000-0002-2419-7018}}$
\and 
Davide Giovagnoli$^{\orcidlink{0009-0005-3375-5199}}$
\and
David Jesus$^{\orcidlink{0000-0001-7065-4737}}$
}
\newcommand{\Addresses}{{
  \bigskip
  \footnotesize

  Fausto Ferrari, \textsc{Dipartimento di Matematica,
Universit\`a di Bologna\\ 
Piazza di Porta San Donato 5, 40126 Bologna, Italy}\par\nopagebreak
  \textit{E-mail address:} {\tt fausto.ferrari@unibo.it}\\

  \medskip

    Davide Giovagnoli, \textsc{Dipartimento di Matematica,
Universit\`a di Bologna\\ 
Piazza di Porta San Donato 5, 40126 Bologna, Italy}\par\nopagebreak
  \textit{E-mail address:} {\tt d.giovagnoli@unibo.it}\\

  \medskip

     David Jesus, \textsc{Applied Mathematics and Computational Sciences (AMCS), King
Abdullah University of Science and Technology (KAUST),  Thuwal 23955-6900, Kingdom of Saudi Arabia}\par\nopagebreak
  \textit{E-mail address:} {\tt djbj1993@gmail.com} \\
 }
}

\date{}

\maketitle

\noindent{\bf Abstract.}
We study a parabolic transmission problem whose interface condition depends on the time derivative of the solution. This model arises naturally as the linearized limiting profile of a two-phase inhomogeneous Stefan problem after the hodograph transform. Our main result shows the existence and uniqueness of a classical solution. We also prove a Harnack inequality for a general class of transmission problems and establish \(C^{1,\alpha}\) estimates up to the interface from both sides. These results can be used to obtain \(C^{1,\alpha}\) regularity of flat free boundaries for the two-phase Stefan problem with distributed sources. 

\smallskip
\noindent{\bf Keywords.} Inhomogeneous Stefan problem, free boundary problems, flat free boundaries, Transmission problems, viscosity solutions.

\smallskip
\noindent\textbf{MSC.}
Primary 35B65; Secondary 35K20, 35D40, 35R35, 80A22.

\section{Introduction}

In this paper, we investigate the regularity of solutions to a  parabolic transmission problem which arises as the linearized limiting profile of the two-phase Stefan problem
\begin{equation} \label{eq:main1}
\begin{cases}
        \lambda \p_t u - \tr(A^{\pm}(t) D^2 u)= 0 & \text{ in }\C_{1}^{\pm}:= (Q_1 \cap \{ \pm \,x_n> 0\}) \times (-1,0] \\
        \p_t u= -\hat{\gamma}(t) \cdot  \nabla' u  + \gamma^+( t) \p_n u^+ - \gamma^-(t) \p_n u^- & \text{ on }\C_{1}':= (Q_1 \cap \{ x_n= 0\}) \times (-1,0].
\end{cases}
\end{equation}

Here, $Q_1$ denotes the unit cube in $\mathbb{R}^n$, and $\lambda > 0$ is a parameter stemming from the lack of scaling of the Stefan problem.  Both the symmetric $n \times n$ matrices $A^\pm$ and the transmission coefficients $\hat{\gamma}$ and $\gamma^\pm$ depend only on time. Throughout the paper, $\nabla' u$ denotes the spatial gradient with respect to the first $n-1$ variables, whereas $\partial_n u^\pm$ refer to the normal spatial derivatives of $u^\pm := u\vert{}_{\overline{\mathcal{C}_1^\pm}}$, respectively.

In addition to its physical motivation from the Stefan problem, the main  novelty of this paper lies in the transmission condition. The interface condition explicitly involves the time derivative $\partial_t u$ coupled with the jump of the normal derivatives. This specific feature introduces significant analytical challenges in the study of parabolic transmission problems. 

The regularity of solutions to the transmission problem \eqref{eq:main1} plays a key role in establishing the free boundary regularity for nondegenerate flat solutions to the two-phase inhomogeneous Stefan problem. We make this connection precise in Section \ref{sec:motivation}, where we derive \eqref{eq:main1} as the linearized limiting profile associated with the Stefan problem in the nondegenerate regime.

It is worth mentioning that, in \cite[Open problem 7]{salsa2012two}, Salsa remarked that the linearization arguments developed by De Silva in \cite{DeSilva2011} to treat inhomogeneous elliptic one-phase problems, recently extended to the parabolic one-phase case in \cite{DFS23}, could potentially be applied to the two-phase Stefan problem with a source term. This intuition is partly confirmed by the results of this paper.

Transmission problems arise in the modeling of physical phenomena exhibiting abrupt changes across a fixed interface. 
Since Picone’s pioneering work in elasticity \cite{picone1954probleme},
several now-classical results helped establish the elliptic theory.
Among them, for instance, we recall \cite{Campanato,Lions,Schechter,Stampacchia}
and for a comprehensive monograph we refer to \cite{Borsuk}.
For some  recent advances, we mention
\cite{LiVogelius,LiNirenberg,CittiFerrari,DFSFully,desilva2018,CSS_Trans,soria2023regularity,GJ_transmission,erneta2026regularity}.

In contrast, the parabolic counterpart remains much less explored, although a few recent contributions can be found in \cite{jesus2026fully,KS}. The results in this paper represent a further step in the understanding of parabolic transmission problems.  We also refer to \cite{bianca2022transmission,Pini2025Transmission} for an overview of regularity results for both elliptic and parabolic transmission problems and their connection with free boundary problems.

To properly frame our contribution, we  introduce the structural hypotheses on the coefficients of \eqref{eq:main1}. While these conditions might initially appear non-standard, they are intrinsically dictated by the scaling properties of the original physical model as shown in Section \ref{sec:motivation}. 

Let $ \lambda \in (0,\lambda_0]$ for some universal constant $\lambda_0$, and fix $K\geq1$.
We assume that the transmission coefficients $\gamma^\pm, \hat{\gamma}$ and the matrices $A^{\pm}$ belong to $C_t^{0,1}([-1,0])$ and satisfy, for any $t \in (-1,0]$, the following bounds
\begin{equation}
\begin{aligned} \label{Assump1}
&K^{-1} I_n\leq  A^{\pm}(t) \leq K I_n, \quad |{\p_t} A^{\pm}(t)| \leq K \lambda^{-1},\\
&|\hat \gamma(t)| \leq K, \quad K^{-1}\leq \gamma^{\pm}(t) \leq K, \\
&|{\p_t}\hat \gamma(t)| \leq \lambda^{-1}, \quad | {\p_t}\gamma^{\pm}(t)| \leq \lambda^{-1}.
\end{aligned}
\end{equation}

We now present our main results for the transmission problem \eqref{eq:main1}. In Theorem \ref{Thm:C1gamma}, we show that there exists a unique classical solution satisfying gradient and Hessian estimates uniform in $\lambda$. This result also provides a $C^{1,\alpha}$-type expansion at the interface by affine functions whose coefficients satisfy the transmission condition.

\begin{Theorem} \label{Thm:C1gamma}
Let $g$ be a continuous function on $\p_p \C_1$ with $\| g\|_{L^\infty(\p_p \C_1)}\leq 1$. Then there
exists a unique classical solution $u$  to \eqref{eq:main1} with coefficients satisfying \eqref{Assump1} such that $u\in C(\overline{\C_1})$ and $u=g$ on $\p_p \C_1$.
  Moreover, for some positive universal constants $C$ and $\alpha$, the solution $u$ satisfies
    \begin{align} \label{eq:C11}
        |\nabla u|\leq C, \qquad |D^2 u| \leq C \quad \text{ in }\C_{1/2}^\pm:= (Q_{1/2} \cap \{\pm \,x_n >0\}) \times (-1/2,0], 
    \end{align}
    and for any $r \leq \frac{1}{2}$ there exist two functions $\ell_{a^+,b}$ and $\ell_{a^-, b}$   affine in $x$ for each fixed $t$ of the form
    \[
    \ell_{ a^\pm, b}(x,t)=a^{\pm}(t) \cdot x + b(t),
    \] 
    such that
    \begin{equation}\label{eq:transmission_1}
        \| u - \ell_{a^{\pm},b} \|_{L^{\infty}(\C_r^\pm)} \leq C r^{1+{ \alpha}},
    \end{equation}
    where $a^\pm(t)=(\hat a, a_n^\pm(t))\in \mathbb{R}^{n-1}\times\mathbb{R}$ with $|\hat a| \leq C$, $|\p_t a_n^{\pm}| \leq C r^{{ \alpha}-1} \lambda^{-1}$ and
    \[ \p_t b(t) = -\hat{\gamma}(t) \cdot \hat a  + \gamma^+(t) a_n^+(t) - \gamma^-(t) a_n^-(t).\]
\end{Theorem}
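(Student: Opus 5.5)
The plan is to prove existence, uniqueness and regularity together, using a priori estimates that are uniform in $\lambda$ and then approximating.

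\textbf{Step 1: comparison principle and uniqueness.} Let $w$ be the difference of two solutions with zero parabolic boundary data. We perturb it to $w_\varepsilon = w - \varepsilon(1 + t + 1)$ or a similar strict supersolution, and suppose that $\max w>0$ is attained at a first time $t_0$.
\begin{itemize}
\item An interior maximum is impossible, by the strict parabolic inequality.
\item At a point of $\{x_n=0\}$ we have $\p_t w\ge 0$ and $\nabla' w=0$. Hopf's lemma on each side gives $\p_n w^+<0<\p_n w^-$. The transmission condition then yields $\p_t w = \gamma^+\p_n w^+ - \gamma^-\p_n w^- <0$, a contradiction.
\end{itemize}
The same argument, applied to viscosity sub- and supersolutions touched by piecewise-affine-in-$x$ test functions, gives a comparison principle for the viscosity formulation.

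\textbf{Step 2: Harnack and Hölder estimates.} We first prove a Harnack inequality (and hence interior-to-interface Hölder continuity) for viscosity solutions of the general class. The interface is treated like a dynamic boundary condition. We build barriers of the form
\[
\varphi = \alpha(t) + \beta x_n^{\pm} - M|x'|^2 + \dots
\]
with the time slope chosen through the transmission condition. We then apply the standard Krylov--Safonov measure estimates on each side and a "growth from the interface" lemma. Since $\lambda$ is small, the bulk equation is essentially elliptic in time. We therefore work in cylinders where time is not rescaled parabolically, and exploit $\lambda\p_t u$ being a small perturbation. This is the reason for the $\lambda^{-1}$ bounds in the conclusion.

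\textbf{Step 3: $C^{1,\alpha}$ expansion.} We use an improvement-of-flatness/compactness iteration.
\begin{itemize}
\item Because the coefficients are Lipschitz in $t$ and independent of $x$, tangential difference quotients in $x'$ are again solutions of the same problem. Harnack/Hölder applied to them gives $|\nabla' u|$ bounded and Hölder.
\item Time derivatives, in the weighted form $\lambda\p_t u$, are handled similarly. Differentiating in $t$ produces terms involving $\p_t A^\pm$ and $\p_t\gamma^\pm$. These are of size $\lambda^{-1}$ and are absorbed because they appear multiplied by $\lambda$.
\item With $\p_t u$ and $\nabla' u$ controlled, each side becomes, for fixed $t$, an elliptic problem $\tr(A^\pm D^2u)=\lambda \p_t u$ with bounded right-hand side. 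The interface then carries the Neumann-type jump $\gamma^+\p_nu^+-\gamma^-\p_nu^- = \p_t u+\hat\gamma\cdot\nabla' u$, which is Hölder.
\item Elliptic transmission $C^{1,\alpha}$ theory then yields the affine approximations $a^\pm(t)\cdot x+b(t)$. Here $\hat a$ is common because $u$ is continuous across the interface, and $\p_t b$ is forced by the interface condition.
\end{itemize}
The bound $|\p_t a_n^\pm|\le Cr^{\alpha-1}\lambda^{-1}$ should follow by comparing the approximations at nearby times in the iteration. The Hessian bound \eqref{eq:C11} comes from the fact that $\p_{x'}$ derivatives are again solutions and from the equation, which controls $\p_{nn}u$.

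\textbf{Step 4: existence.} We obtain existence by regularizing the data and coefficients and solving with Perron's method, using the comparison principle and barriers at $\p_p\C_1$ (or alternatively Galerkin/Rothe time-discretization). The uniform estimates give compactness, and the limit is classical by Step 3.

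\textbf{Main obstacle.} The step I expect to be hardest is the Harnack inequality at the interface with the dynamic condition, uniformly in $\lambda$. The difficulty is the mismatch between the parabolic scaling in the bulk and the first-order-in-time boundary law, which requires carefully designed barriers.
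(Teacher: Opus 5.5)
The genuine gap is in Step 3, in the control of the time derivative. Both your fixed-$t$ elliptic transmission argument and your bound on $\p_{nn}u$ ``from the equation'' depend on it, and it does not close. Differentiating in $t$ absorbs nothing: $W=\lambda\p_t u$ solves
\[
\lambda\p_t W-\tr(A^\pm D^2W)=\lambda\,\tr(\p_tA^\pm\,D^2u)\quad\text{in }\C_1^\pm,
\]
and on $\C_1'$ it satisfies the transmission condition with the extra source $\lambda\big(-\p_t\hat\gamma\cdot\nabla'u+\p_t\gamma^+\p_nu^+-\p_t\gamma^-\p_nu^-\big)$.

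Since $|\p_tA^\pm|\le K\lambda^{-1}$ and $|\p_t\gamma^\pm|,|\p_t\hat\gamma|\le\lambda^{-1}$, these sources have size $|D^2u|$ and $|\nabla u|$. Those are exactly the quantities in \eqref{eq:C11} you are trying to bound. A priori, interior estimates only give $|\p_nu|\lesssim|x_n|^{\beta-1}$ and $|D^2u|\lesssim|x_n|^{\beta-2}$ near the interface. Moreover, the Harnack inequality charges an interior source at the rate $\lambda^{-1}\|f^\pm\|_{L^\infty}$ and allows no source on the interface. So even bounded sources would not give estimates uniform in $\lambda$, and the scheme is circular.

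The same misconception appears in Step 2. There $\lambda\p_t u=\tr(A^\pm D^2u)$ is $O(1)$, not a small perturbation. The paper instead keeps the full parabolic structure on time steps of length $\lambda$, using the weak Harnack inequality for $u(x,t_0+\lambda t)$. It gains $c_0\lambda$ in the barrier coefficient per step, over $\sim\lambda^{-1}$ steps.

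The missing idea is to never differentiate in $t$, and to obtain spatial $C^{1,\alpha}$ \emph{before} any time regularity. The paper proceeds as follows.
\begin{itemize}
\item It uses only tangential difference quotients, which are exact solutions because the coefficients are $x$-independent. This gives $|D^2_{x'}u|\le C$ and $|\nabla'\p_nu|\le C|x_n|^{\alpha-1}$.
\item It freezes $x'$ and treats $u(0,x_n,t)$ as a solution of a one-dimensional problem with source $O(|x_n|^{\alpha-1})$.
\item For $v(x_n,t)=u(0,x_n,\lambda t)$, after removing the tangential term, the interface condition reads $\p_tv=\lambda(\gamma^+\p_nv^+-\gamma^-\p_nv^-)$. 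Parabolic rescaling replaces $\lambda$ by $\rho\lambda\to0$.
\item The blow-up limit is therefore two \emph{decoupled Dirichlet problems with time-independent trace}. Compactness then gives one-sided $C^{1,\alpha}$ (Lemma~\ref{lm:reg1d}).
\item Only afterwards does the interface condition put the trace in $C^{1,\alpha/2}_t$. Schauder estimates for the Dirichlet problem on each side then bound $\p_{nn}u$ and $\lambda\p_tu$.
\item A time-mollification of $\p_nu^\pm(0,\cdot)$ at scale $\lambda r^2$ gives the bound on $\p_ta_n^\pm$.
\end{itemize}

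There is a secondary gap in Steps 1 and 4. Your maximum-principle argument is fine for classical solutions. It does not, however, extend ``by the same argument'' to viscosity sub- and supersolutions. At an interface contact point neither function has one-sided normal derivatives, and a test function touching one says nothing about the other. You need this viscosity comparison for Perron's method. You also need it for tangential difference quotients of viscosity solutions to be solutions, as in Proposition~\ref{prop:diff_sol}. The paper builds it from four ingredients: $x'$ sup/inf-convolutions, replacement by bulk Dirichlet solutions, the trace estimate of Lemma~\ref{lm:lipschitz_trace}, and a bootstrap of regularity at the contact point. If you prefer to avoid this, you would instead have to supply an existence theory for classical solutions of the regularized dynamic transmission problem.
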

 The estimate \eqref{eq:transmission_1} can be applied to establish regularity of the free boundary for the two-phase inhomogeneous Stefan problem, using the techniques developed in \cite{DFS23} for the homogeneous one-phase problem, and subsequently extended to the inhomogeneous case in \cite{ferrari2026free}. 

A fundamental tool in the proof of Theorem \ref{Thm:C1gamma} is the following Harnack inequality, which 
holds for a more general class of transmission problems denoted by $S^*_{\lambda,T}(f^{\pm})$, that incorporates the model \eqref{eq:main1}. For the precise definition of $S^*_{\lambda,T}(f^{\pm})$, see Section \ref{sec:preli}.

\begin{Theorem}\label{Lm:Harnack}
    Let $u \in S^*_{\lambda,T}(f^{\pm})$ in $\C_1$. Then there exist positive universal constants $c < 1$ and $C$ such that
    \[
    \osc_{\C_{1/2}} u \leq (1-c) \osc_{\C_1} u + C \lambda^{-1}\|f^\pm\|_{L^\infty(\C_1)}.
    \]
\end{Theorem}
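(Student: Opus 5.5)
We will prove the oscillation decay by a one-sided Harnack argument: we show that, after normalization, either $u$ is pushed up by a fixed fraction in $\C_{1/2}$ or it is pushed down. Since the class $S^*_{\lambda,T}(f^\pm)$ is, as we expect from Section \ref{sec:preli}, defined through Pucci-type extremal inequalities $\lambda\p_t u-\Mp(D^2u)\le |f^\pm|$ and $\lambda \p_t u-\Mm(D^2u)\ge -|f^\pm|$ in $\C_1^\pm$, together with viscosity inequalities for the interface condition $\p_t u \lessgtr \pm K|\nabla' u| + (\gamma^+\p_n u^+-\gamma^-\p_n u^-)$ on $\C_1'$, the class is invariant under $u\mapsto -u$ and under affine renormalizations $u\mapsto (u-m)/M$. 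We may therefore assume $0\le u\le 1$ in $\C_1$, with $\osc_{\C_1}u=1$, and $\lambda^{-1}\|f^\pm\|_\infty\le\delta$ small; otherwise the estimate is trivial with $C\ge 1/\delta$.

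The key step is the following lemma. If $u\ge 0$ in $\C_1$, $u\in S^*_{\lambda,T}(f^\pm)$, and $u\ge 1/2$ on a fixed portion of the parabolic domain, then $u\ge c$ in $\C_{1/2}$. We take this portion to be, say, the set $\{x_n\ge 1/4\}$ at an earlier time slab, with positivity given in measure or at a point. By symmetry, either $u\ge 1/2$ or $1-u\ge 1/2$ at a chosen point of $\C_1^+$ in an early time slab, say $(e_n/2,-3/4)$. Away from the interface, interior Krylov--Safonov Harnack theory for the rescaled equation $\lambda\p_t-\Mp$ spreads this positivity to a neighborhood.

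To handle the $\lambda$ dependence, we rescale time by $s=t/\lambda$, so that the interior equations become $\p_s u-\Mp(D^2u)\le \lambda^{-1}|f|$. This explains the factor $\lambda^{-1}\|f\|$ in the statement. The interface condition, however, becomes $\lambda^{-1}\p_s u=\dots$, with a small coefficient on the time derivative; for $\lambda\le\lambda_0$ it is close to a Neumann-jump (elliptic-in-time) condition. The bounds must therefore be uniform in $\lambda$.

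To transport positivity to the interface and across it, we construct an explicit barrier $w$ on $\C_1^+\cup\C_1^-$, continuous across $\{x_n=0\}$. On each side we take a function of the form
\[
w=\eta(t)\bigl(|x-x_0|^{-p}-R^{-p}\bigr)
\]
or a simpler polynomial profile $w=\eta(t)\bigl(1-|x'|^2\bigr) \pm \beta x_n$-type piece, with kinks chosen so that $\gamma^+\p_n w^+-\gamma^-\p_n w^-\ge$ a large positive constant. Because $\gamma^\pm\ge K^{-1}$, a convex kink in the $x_n$ direction makes the jump term dominate both $\p_t w$ and $K|\nabla' w|$ at the interface. This makes $w$ a strict subsolution of the transmission condition, and by taking $p$ large it is a strict subsolution of the Pucci equations in the interior. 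A comparison principle for $S^*$ against strict sub/supersolutions (touching-from-below argument) then gives $u\ge w - C\lambda^{-1}\|f\|\ge c$ in $\C_{1/2}$. The touching cannot happen at the interface because of the strict inequality, and not in the interior by Pucci strictness.

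The main obstacle is the interface. On the one hand, the condition couples $\p_t u$ to a jump of normal derivatives with coefficient independent of $\lambda$, while the bulk has $\lambda\p_t$. A barrier with $\p_t w$ of order $\lambda^{-1}$ in the bulk produces a large $\p_t w$ on $\{x_n=0\}$, which must be absorbed by the kink. On the other hand, making the kink large worsens nothing in the bulk, since it is located only on the hyperplane. So we will choose $\eta(t)$ with $\lambda|\eta'|$ bounded, kink size of order $\lambda^{-1}$ times a universal constant, and check both conditions.

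If this direct barrier fails, the fallback is a measure-to-uniform (weak Harnack / $L^\varepsilon$) argument via flattening: even reflection in $x_n$ combined with an ABP-type estimate. This is more delicate, so we will try the barrier first. With the lemma in hand, applying it to $u$ or $1-u$ gives $\osc_{\C_{1/2}}u\le 1-c$ plus the error, which is the claim.
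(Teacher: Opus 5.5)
Your key lemma cannot hold with a constant independent of $\lambda$, and no barrier, with or without a kink, can rescue it. The underlying issue is that your heuristics reverse the two time scales. After $s=t/\lambda$ the interface condition reads $\p_s u=\lambda\,(\gamma^+\p_n u^+-\gamma^-\p_n u^- -\hat\gamma\cdot\nabla' u)$. So the factor $\lambda^{-1}$ in front of $\p_s u$ is \emph{large}, and on the bulk time scale the interface is essentially frozen (Dirichlet-like; compare the limit problem \eqref{eq:1d_transmission_redu_limit}). It is not close to a Neumann-jump condition. The bulk relaxes in time $O(\lambda)$, while interface values evolve in time $O(1)$.

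Here is a concrete obstruction. Take $A^\pm=I$, $\gamma^\pm=1$, $\hat\gamma=0$, $f^\pm=0$, and let $u=u(x_n,t)$ have zero initial datum and, as datum on $\{x_n=\pm1\}$, a pulse of height $1$ and duration $C\lambda$ ending at $t=-3/4$. At $t=-3/4$, $u$ is bounded below by a universal constant on $\{|x_n|\ge 1/2\}$. The normal fluxes at the interface are $O(1)$ only during the pulse and then decay exponentially on the time scale $\lambda$, so their time integral is $O(\lambda)$ and $u(x',0,t)=O(\lambda)$ for all $t$. After the pulse the bulk relaxes to the quasi-static profile, hence $0\le u\le C\lambda$ in $\C_{1/2}$, and any admissible barrier below $u$ inherits this bound.

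So positivity at a point, or in measure on a slab of duration $O(\lambda)$, yields at best $u\ge c\lambda$ in $\C_{1/2}$, i.e.\ a decay factor $1-c\lambda$. That is useless, because Proposition~\ref{Prop:holder} applies the estimate to rescalings with $\lambda r$ in place of $\lambda$, so $c$ must be independent of $\lambda$. With a measure condition on a slab of duration of order one the lemma would be true, but proving it needs exactly the window-by-window accumulation that a single comparison cannot provide.

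What is missing is the accumulation mechanism of Lemmas~\ref{Lm:Barrier_improvement} and~\ref{Lm:Harnack1}. The paper's lower barrier is $s(t)\phi(x)$, with $\phi$ a \emph{stationary} profile: $\Mm(D^2\phi)=0$ in $\Omega^\pm$, $\phi=\eta$ on the flat part of the interface, evenly reflected. Thus the bulk follows the interface quasi-statically, and no time derivative of order $\lambda^{-1}$ ever has to be absorbed by a kink. Hopf's lemma and the transmission condition let this bound be transported forward with $s'=-C_0 s$, a loss on the time scale $1$.

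In every window $[t_k,t_k+\lambda]$ in which $\{u\ge 1/4\}$ fills half of $Q_1^+\times[t_k,t_k+\lambda/8]$, three ingredients produce a gain of $c_0\lambda$ in the coefficient on both sides. They are the interior weak Harnack inequality, explicit barriers in the boundary layer $\Omega^+\setminus D^+$, and a transfer step to $\Omega^-$ that uses the improved interface values as Dirichlet data.

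The dichotomy is then applied to the \emph{number} of good windows among the $N\approx(2\lambda)^{-1}$ windows in $[-1,-1/2]$: either $u$ or $1-u$ is good on half of them. Then $N/2$ gains of size $c_0\lambda$ beat the exponential loss, giving a lower bound independent of $\lambda$. The errors, $\eps_0/\lambda$ per unit time and $\eps_0$ per window, add up to $O(\eps_0/\lambda)$, which is where $\lambda^{-1}\|f^\pm\|_{L^\infty}$ comes from.

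Your fallback (even reflection plus ABP) does not bypass this. Reflection does not convert the dynamic condition linking $\p_t u$ to the flux jump into an interior equation, and the measure information must still be summed over $\sim\lambda^{-1}$ windows.
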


In the one-phase Stefan problem, an estimate of this type yields an \textit{almost H\"older} modulus of continuity, which provides the compactness needed for the corresponding sequence of error functions; see \cite[Theorem 5.4]{DFS23} and \cite[Proposition 4.8]{ferrari2026free}. The result above, however, is not a direct application of these one-phase arguments. Its proof requires genuinely new two-phase barriers that transfer oscillation information across the interface while respecting the transmission condition.

Iterating Theorem \ref{Lm:Harnack}, we obtain uniform H\"older estimates with respect to an appropriate distance $d_\lambda$. This distance is consistent with the scaling of the problem, interpolating between the standard parabolic distance away from the interface and the hyperbolic distance close to $\{ x_n=0 \}$. The precise definition of $d_\lambda$, together with the corresponding H\"older continuity result, is provided in  Proposition \ref{Prop:holder}.

We conclude this introduction by briefly recalling the Stefan problem.
Let $T>0$ and let $\Omega$ be a bounded domain in $\R^n$, the two-phase Stefan problem can be modeled as
\begin{equation}\label{eq:twophaseStef}
 \begin{cases}
     H^+\partial_tu - k^{+}\Delta u=f& \quad \text{ in } \Omega_T^+(u):=\left( \Omega \times (0,T] \right) \cap \{u>0\},\\ H^{-}\partial_tu -  k^{-}\Delta u =f& \quad \text{ in }\Omega_T^-(u):=\left( \Omega \times (0,T] \right) \cap \{u\leq 0\}^0,  \\
     \displaystyle \frac{ \partial_{t} u^+}{\nor{\nabla u^+}} = k^{+} \nor{\nabla u^+}- k^{-}\nor{\nabla u^-} = -\frac{ \partial_{t} u^-}{\nor{\nabla u^-}} & \quad \text{on}\:\: \mathcal{F}(u):=\left(\Omega \times (0,T] \right) \cap \partial\{ u > 0  \},
 \end{cases}
 \end{equation} 
 where $H^+,H^-,k^{+},k^{-}>0$ denote, respectively,  the heat capacities  and the thermal conductivity of the two states. These quantities are related through the thermal diffusivities $D^{+},D^{-}$ of the two states, defined by
 \[
 D^{\pm} = \frac{k^{\pm}}{H^{\pm}}.
 \]

Here we assume that $H^\pm$ and $k^\pm$ are constant and independent of the temperature. This is the classical formulation of the two-phase Stefan problem and is the setting considered in the regularity theory developed by Athanasopoulos, Caffarelli, and Salsa in \cite{ACS,ACS2,ACS3}; see also \cite[Chapter XI]{carslaw1959conduction} for the physical background of the model.
For a comprehensive treatment of the various formulations of the Stefan problem, we refer the reader to \cite{vis,Friedman,Friedmanerr}.

Following the strategy introduced by De Silva in \cite{DeSilva2011} for elliptic equations and later adapted to the Stefan problem in the perturbative framework of \cite{DFS23}, we focus on the transmission problem \eqref{eq:main1} and its associated extremal class. In fact, this problem naturally arises as the linearized model in the improvement-of-flatness scheme, and the regularity results established in this paper provide the analytical input needed to study the free boundary of the two-phase Stefan problem \eqref{eq:twophaseStef}. The exact derivation of \eqref{eq:main1} from \eqref{eq:twophaseStef} is detailed in Section \ref{sec:motivation}.

Recent works \cite{ferrari2026free,ferrari2025geometry} have successfully implemented this strategy for the one-phase Stefan problem with distributed sources. For the homogeneous two-phase Stefan problem, we refer instead to the trilogy \cite{ACS,ACS2,ACS3}, whose approach differs substantially and is rooted in the elliptic theory developed in \cite{CS}.

The proof of Theorem \ref{Lm:Harnack} relies on the construction of suitable barriers that allow us to transfer  information about oscillation decay across the interface. This construction is carried out in Section \ref{sec:harnack}.
Subsequently, to obtain higher regularity in the proof of Theorem \ref{Thm:C1gamma}, we reduce the analysis to a one-dimensional problem along the direction perpendicular to the interface, see Section \ref{sec:C1gamma}.

\smallskip
\noindent\textbf{Plan of the paper.} The paper is organized as follows.  In Section \ref{sec:motivation}, we motivate our study by detailing the connection between the classical two-phase Stefan problem and our limiting transmission model via the hodograph transform.
Then, in Section \ref{sec:preli}, we collect some preliminary results, introduce notation, and define the notion of viscosity solution. Section \ref{sec:harnack} is devoted to the proof of the Harnack inequality of Theorem \ref{Lm:Harnack}, which leads to the H\"older continuity estimates established in Section \ref{sec:holder}. The comparison principle for viscosity solutions is proved in Section \ref{sec:compare}. Section \ref{sec:existence} is devoted to the existence of the solution of \eqref{eq:main1}. In Section \ref{sec:C1gamma}, we establish Theorem \ref{Thm:C1gamma} via a reduction to a one-dimensional problem. Finally, Section \ref{sec:1dcase} provides the technical analysis and Schauder estimates for the reduced one-dimensional problem.

\section{Motivation of the problem} \label{sec:motivation}

In this section we explain the role of the transmission problem
\eqref{eq:main1} 
as the natural linearized model associated with the
two-phase Stefan problem with distributed sources. We argue formally, assuming that the solution is classical, and we focus on the nondegenerate
flat regime in which both phases contribute to the motion of the free
boundary. Starting from a flat solution of \eqref{eq:twophaseStef}, we
apply the hodograph transform to fix the free boundary, and then
linearize the resulting nonlinear transmission problem around the
corresponding planar profile. This procedure yields precisely the limiting equation \eqref{eq:main1}.

In a $\lambda$-neighborhood of a regular free boundary point, a
nondegenerate classical solution is expected to be of size $\lambda$ and
to be well approximated by a two-plane profile, whose slopes encode the
incoming heat fluxes from the two phases. We therefore impose the
following flatness hypothesis.

Let us fix a large constant $K>1,$ possibly depending on $H^{+}$ and $H^{-}$. Let $0<\lambda \leq \lambda_0$ for some $\lambda_0$ small and let $u$ be a solution to \eqref{eq:twophaseStef} in $B_{\lambda} \times(-\lambda,0]$. We will make the following assumption, which we will refer to as $\varepsilon$-flatness.
Assume that $0\in \p \{u>0\},$ and there exist smooth functions $a_n^+, a_n^-$ and $b$  such that  
\begin{equation} \label{eps-flat}
\left|u - a_n^+(t) \, \left (x_n - b(t)\right)_+ + a_n^-(t) \, \left (x_n - b(t)\right)_- \right| \leq C\varepsilon \lambda \quad \text{in } B_\lambda \times [-\lambda, 0],
\end{equation}
 with 
 \begin{equation} \label{eq:nondeghp}
 K^{-1}\le a_n^\pm \le K, \quad \quad |{\p_t} a_n^{\pm}(t)| \le  c_0\lambda^{-2}, \quad \quad {\p_t} b(t)= k^{-}a_n^-(t)-k^{+}a_n^+(t),\quad\quad  \|f\|_{L^\infty}\leq \varepsilon^2,
 \end{equation}
for some small positive $\varepsilon$ depending only on $K $ and $n$.

We remark that the free boundary is invariant under hyperbolic dilation, which consequently sets the correct scale in its neighborhood. 
Notice that the above conditions provide a characterization of the flatness of the solution.
Furthermore, if $f \equiv 0$, the condition on $u$ can be  interpreted as the case in which $u$ is an $o(\lambda)$ perturbation of a two-phase traveling wave solution of the form
\begin{equation}
    u_0(x_n,t) = \frac{1}{H^+-H^-}\left[ \left(\exp\left(\frac{ax_n + a^2 t}{D^+}\right) -1\right)_+ -\left(\exp \left( \frac{a x_n + a^2 t}{D^{-}} \right) -1\right)_-  \right].
\end{equation}
Here, we assume  $H^+>H^{-}$. 
 In this case, the functions $a_n^+$, $a_n^-$ and $b$ are defined as
 \[
 a_n^+(t)= \frac{a}{D^+(H^+-H^-)}, \; a_n^{-}(t)=\frac{a}{D^-(H^+-H^-)} \in (K^{-1},K) \quad \text{and $b(t)=-at$}.
 \]
The relation ${\p_t} b(t)=k^{-}a_n^-(t)-k^{+}a_n^+(t)$ implies that the approximating linear functions in $x$ satisfy the free boundary condition. We will refer to this assumption as the $\eps$-flatness of the solution.

\begin{Remark}

The $\eps$-flatness condition on $u$ is restrictive compared to the general situations that may occur. As in the homogeneous case, we cannot expect flat free boundaries to regularize instantaneously. As shown in \cite[Section 8.3]{CS} and \cite[Section 10]{ACS2}, there are explicit counterexamples of persistent Lipschitz corners when the heat fluxes from both sides vanish. To prevent this phenomenon, it is natural to prescribe a nondegeneracy condition ensuring that at least one of the two heat fluxes does not vanish. The $\eps$-flatness of $u$ describes the setting where both heat fluxes are nonvanishing and of comparable size. Whenever one of the two phases is small but not negligible, the solution is expected to behave like a one-phase solution.
In this paper, we do not aim to provide a general characterization of flat free boundaries for the two-phase problem \eqref{eq:twophaseStef}, rather, we focus  on the nondegenerate case starting from the $\eps$-flatness condition. A full characterization remains a very interesting open problem.

\end{Remark}

To show the connection between the Stefan problem \eqref{eq:twophaseStef} and the parabolic transmission problem \eqref{eq:main1}, we employ the hodograph transform, following the strategy introduced in \cite{DFS23}.
 To avoid confusion, from now on, we will use $u_{S}$ to denote a solution to the Stefan problem \eqref{eq:twophaseStef} and $u_H$ its Hodograph transform.

Geometrically, we can view the graph of $u_S$ in $\mathbb{R}^{n+2}$, defined by
$$
\Gamma := \{ (x,x_{n+1},t) \mid x_{n+1}= u_S(x_1, x_2, \dots, x_n,t) \},
$$
as the graph of a possibly multi-valued function $u_H$ with respect to the $x_n$-coordinate
$$
\Gamma := \{ (x,x_{n+1},t) \mid x_n= u_H(x_1, x_2, \dots, x_{n-1}, x_{n+1}, t) \}.
$$
By deriving the equations satisfied by this transformed function in the new spatial coordinates $y = (x_1, \dots, x_{n-1}, x_{n+1})$, mimicking the computations and notation from \cite[Section 3]{ferrari2026free}, we obtain
\begin{align*}
 &\nabla u_S = - \frac{1}{\p_n u_H} \left( \nabla' u_H, -1 \right), \quad  \p_t u_S= - \frac{\p_t u_H}{\p_n u_H} , \\
 &D^{2}u_S= -\frac{1}{\partial_{n}u_H} A^T(\nabla u_H) \, D^2 u_H \, A(\nabla u_H).
 \end{align*}
 Here, $A(\nabla u_H)$ is a square matrix that coincides with the identity matrix everywhere except on the $n$-th row, where the entries are given by the right-hand side of $\nabla u_S$ above.

The advantage of this change of variables is that the unknown free boundary of \(u_S\) is mapped to the fixed interface
\(\{y_n=0\}\), while the Stefan condition becomes a nonlinear
transmission condition.
 
Consequently, we can transform the Stefan problem \eqref{eq:twophaseStef} into a parabolic transmission problem for $u_H$, coupled with an oblique derivative condition at the fixed interface $\{ y_n=0 \}$. This yields
\begin{equation}\label{eq:HodographStef}
        \begin{cases}
        \partial_{t} u_H - \tr( \Bar{A}^{\pm}(\nabla u_H) D^2 u_H) + (H^{\pm})^{-1} \partial_{n}u_H \,  f(y',u_H,t)=0 & \text{in} \,\, \{ \pm \, y_n > 0 \},\\
~
         \\
       \displaystyle \frac{\partial_{t} u_H^+}{ \nor{(\nabla' u_H^+, -1)}} = -k^{+}\frac{\nor{(\nabla' u_H^+, -1)}}{\partial_{n}u_H^+} + k^{-}\frac{\nor{(\nabla' u_H^-, -1)}}{\partial_{n}u_H^-} = \frac{\partial_{t} u_H^-}{ \nor{(\nabla' u_H^-, -1)}}& \text{on} \,\,  \,\, \{ y_n = 0 \},
    \end{cases}
    \end{equation}
    where, taking $p=(p',p_n) \in \R^n$, we can express $A$, as the following positive definite matrix
    \[
\Bar{A}^{\pm}(p)= D^{\pm} \cdot \begin{pNiceArray}{c|c}
\text{ \begin{Large} $I_{n-1}$ \end{Large}} &   \displaystyle -\frac{p'}{p_n}\\
  \hline
 \displaystyle -\frac{p'}{p_n} &  \displaystyle \frac{1}{p_n^2} \left( 1+\nor{p'}^2 \right)
\end{pNiceArray}.
\]

Since $u_H$ is continuous across the fixed interface $\{y_n=0\}$, its traces from both sides coincide. Consequently, the tangential derivatives of $u_H^\pm$ match, i.e., $\nabla' u_H^+ = \nabla' u_H^-$. Thus, \eqref{eq:HodographStef} reduces to
 
\begin{equation}\label{eq:ReduHodoStef}
        \begin{cases}
        \partial_{t} u_H - \tr( \Bar{A}^{\pm}(\nabla u_H) D^2 u_H) + (H^{\pm})^{-1} \partial_{n}u_H \,  f(y',u_H,t)=0 & \text{in} \,\, \{ \pm \, y_n > 0 \},\\
~
         \\
       \displaystyle \partial_{t} u_H^+ =g(\nabla' u, \p_n u_H^+, \p_n u_H^- )= \partial_{t} u_H^-& \text{on} \,\,  \,\, \{ y_n = 0 \},
    \end{cases}
    \end{equation}
where  
\begin{equation}\label{eq:def_g}
g(p', p_n^+, p_n^- )= -\left(1 +  |p'|^2 \right) \left(\frac{k^{+}}{p_n^+} - \frac{k^{-}}{p_n^-} \right).
    \end{equation}

    Defining the set 
\begin{equation*}
    \mathcal{R}_K:= B_{K}(0) \cap \{ p_n  \geq K^{-1}\} \subset \mathbb{R}^{n},
\end{equation*}
we notice that if $(p',p_n^\pm)\in \mathcal{R}_K$, then $\partial_{p_{n}^+} g(p',p_n^+,p_n^-) > 0$ and $\partial_{p_{n}^-} g(p',p_n^+,p_n^-) < 0$. 
By choosing $K > 1$ large enough, we may assume that $\tr(\Bar{A}^{\pm}(p',p_n^\pm)M)$ is uniformly elliptic in $M$ with ellipticity constants $K^{-1}$ and $K$ for every $(p',p_n^\pm) \in \mathcal{R}_K$, see \cite[Section 3]{ferrari2026free}.
Furthermore, up to enlarging $K$ if necessary, we can assume the following bounds inside $\mathcal{R}_K$
\begin{equation}\label{eq:boundg}
\begin{aligned}
        &\nor{\nabla_{p} \tr(\Bar{A}(p',p_n^\pm)M)} \leq K \nor{M}, \quad \norm{g}_{C^1} \leq K,  \\   & \partial_{p_{n}^+} g(p',p_n^+,p_n^-)  \geq K^{-1}, \quad \partial_{p_{n}^-} g(p',p_n^+,p_n^-)  \leq - K^{-1}.
\end{aligned}
\end{equation}

\begin{Remark}
    \label{Rmk:flat}
    Using the flatness assumption on the free boundary given by \eqref{eps-flat}, we can localize the set where $u_H$ is potentially multi-valued.  Specifically, one can  prove that $u_H$ is single-valued in the set $\{|y_n| \geq C_0\varepsilon\lambda\}$ and $\nabla u_H \in \mathcal{R}_{K}$.  
    
    Let $t\in [t_0-\lambda^2,t_0+\lambda^2]$, then using the bounds for $|{\p_t}a|$ and $|{\p_t}b|$ in the assumptions \ref{eq:nondeghp}, for any $t_1, t_2 \in (t_0,t)$ we get
    \begin{equation*}
        \begin{aligned}
            &\nor{a_{n}^\pm(t)-a_{n}^\pm(t_0)} \leq \nor{{\p_t}a_{n}^\pm(t_1)} \nor{t-t_0} \leq c_0  \\
            &\nor{b(t)-b(t_0)} \leq \nor{a_n^-(t_2) -a_{n}^+(t_2)} \nor{t-t_0}  \leq 2K\lambda^{2}.
        \end{aligned}
    \end{equation*}
Now following the reasoning of \cite{DFS23}, as in \cite[Remark 3.3]{ferrari2026free},  we can analyze the sets $\{y_n \geq C_0 \varepsilon \lambda\}$ and $\{y_n \leq -C_0 \varepsilon \lambda\}$ separately to reach the desired conclusion.

By applying the inverse function theorem, we can reformulate the flatness hypotheses in terms of the hodograph transform as
 \begin{align*}
        &\nor{u_H - (\bar a_n^\pm (t)y_n+\bar b(t))} \leq C\varepsilon \bar \lambda,\quad   \textit{ in } \mathcal{C}^\pm_{\bar \lambda},  \\
        &{\p_t} \bar b(t)=g(0, \bar a_n^+(t),  \bar a_n^-(t)), \quad  K^{-1} \leq \bar a_n^\pm \leq K, \quad \nor{ {\p_t} \bar a_n^\pm} \leq c_1 \bar \lambda^{-2},
    \end{align*}
    with $\bar \lambda\leq \lambda_1$ for some $ \lambda_1>0$ small. 
\end{Remark}

Setting aside the potential multi-valuedness of $u_H$ for the moment, we assume $\partial_{n}u_H^\pm > K^{-1}$, which is guaranteed to hold at least away from the interface as justified by Remark \ref{Rmk:flat}.
For simplicity of notation, from now on we will denote the transformed function $u_H$ simply as $u$, while the original solution to \eqref{eq:twophaseStef} will be denoted by $u_S$. We emphasize that $u$ inherits its core properties from being the transform of $u_S$. Additionally, we drop the bar from $\bar A^{\pm}, \bar \lambda, \bar a_{n}^\pm, \bar b$ and use the variable $x$ instead of $y$.

We now focus on linearizing the equation around the approximating functions $l_{a,b}^{\pm}$. For each fixed $t$, $l_{a,b}^{\pm}$ are linear in $x$, their coefficients in the $x'$ variables are independent of $t$, and they satisfy the transmission condition in \eqref{eq:ReduHodoStef}. Specifically we have

\[
l_{a,b}^{\pm}(x,t):= a^{\pm}(t) \cdot x + b(t)
\]
with 
\begin{equation} \label{Hp:condit-assum-flat}
a^{\pm}(t)=(\hat a, a_n^{\pm}(t)), \quad \hat a \in \mathbb{R}^{n-1}, \quad a^{\pm}(t) \in \mathcal{R}_{K},\quad \nor{{\p_t}a_n^{\pm}(t)} \leq \delta \varepsilon \lambda^{-2},
\end{equation}
 where
    \begin{equation*}
        \varepsilon \leq \varepsilon_{1}, \quad \lambda \leq \lambda_{1}, \quad \lambda \leq \delta \varepsilon,
    \end{equation*}
for some $\delta>0$ and such that 
\[
{\p_t}b(t)=g(\hat a, a_n^{+}(t),a_n^{-}(t)).
\]

To establish the $C^{1,\alpha}$ regularity of the free boundary, we rely on the strategy developed in \cite{DFS23} for the one-phase Stefan problem, which was originally introduced by De Silva \cite{DeSilva2011} for the elliptic case. This approach consists of obtaining an improvement of flatness for solutions to \eqref{eq:twophaseStef} that satisfy the flatness condition \eqref{eps-flat} under the assumptions \eqref{eq:nondeghp}.
These solutions inherit the regularity of the solutions to the limiting transmission problem \eqref{eq:main1}. 

The main idea is to linearize our problem near $l_{a,b}^{\pm}$.  For this, we introduce the error function $w$, defined as
\[
u(x,t)= l_{a,b}^{\pm}(x,t) + \varepsilon \lambda w\left( \frac{x}{\lambda},\frac{t}{\lambda}\right) \quad \text{for }(x,t) \in \C_{\lambda}^{\pm}:= Q_\lambda^{\pm} \times (-\lambda,0].
\]
The function $w$ is defined in $\C_1$ and, due to the flatness assumption, satisfies $|w| \leq 1$.
Since $u$ solves \eqref{eq:ReduHodoStef}, inserting the definition of $l_{a,b}^{\pm}$ reveals that $w$ formally satisfies
\begin{equation}\label{eq:LinTwoStefan}
        \begin{cases}
        \lambda {\p_t} a_n^{\pm} (\lambda t) x_n + {\p_t} b(\lambda t) + \varepsilon \partial_t w(x,t) - \tr\big(A^{\pm}(a^{\pm}(\lambda t) + \varepsilon \nabla w) \frac{\varepsilon}{ \lambda} D^2 w )\big) \\  +(H^{\pm})^{-1}\left( \varepsilon\partial_{n}w+ a_n^{\pm}(\lambda t) \right) f(\lambda x', l_{a,b}^{\pm} + \varepsilon\lambda w, \lambda t) =0  & \text{in } \mathcal{C}_1^\pm,\\
        {\p_t} b(\lambda t) +  \varepsilon \partial_{t} w =  g(\hat a + \varepsilon \nabla' w, a_n^{+}(\lambda t) + \varepsilon \p_n w^+,a_n^{-}(\lambda t) + \varepsilon \p_n w^-)& \text{on } \mathcal{C}'_1.
    \end{cases}
    \end{equation}
    By multiplying the first equation in \eqref{eq:LinTwoStefan} by $\lambda \varepsilon^{-1}$ and the second by $\varepsilon^{-1}$, and then taking the limits as $\varepsilon \to 0$ and $\delta \to 0$, we arrive at the limiting transmission problem
\begin{equation}\label{eq:LimitTwoStefan}
        \begin{cases}
        \lambda \partial_t v = \tr(A_{\lambda}^{\pm}(t) D^2 v )& \text{in} \,\, \mathcal{C}_1^{\pm},\\
     \partial_t v = \nabla g (\hat a, a_n^+(\lambda t),a_n^-(\lambda t)) \cdot (\nabla' v, \p_n v^+, \p_n v^{-}) & \text{on} \,\,  \,\, \mathcal{C}'_1,
    \end{cases}
    \end{equation}
    where  $A_{\lambda}^{\pm}(t):= A^{\pm}(a(\lambda t))$.
By explicitly expressing the transmission condition using the definition of $g$ in \eqref{eq:def_g}, we obtain
\begin{align*}
\p_t v= -2 \left( \frac{k^+}{a_n^+(\lambda t)}- \frac{k^-}{a_n^-(\lambda t)}\right) \hat a\cdot \nabla'v + \left(1+|\hat a|^2 \right) \left( \frac{k^+ \p_n v^+}{(a_n^+(\lambda t))^2} -\frac{k^- \p_n v^-}{(a_n^-(\lambda t))^2}\right).
\end{align*}
At this stage, we can define the time-dependent coefficients as 
\[
\hat \gamma_\lambda(t) := 2 \left( \frac{k^+}{a_n^+(\lambda t)}- \frac{k^-}{a_n^-(\lambda t)}\right)\hat a, \qquad \gamma_\lambda^{\pm}(t):=\left(1+|\hat a|^2 \right) \frac{k^\pm}{(a_n^\pm(\lambda t))^2}.
\]
With this notation, the interface condition becomes $\partial_t v = -\hat{\gamma}_\lambda(t) \cdot \nabla' v + \gamma_\lambda^+(t) \partial_n v^+ - \gamma_\lambda^-(t) \partial_n v^-$.
Furthermore, using the bounds in \eqref{eq:boundg} and \eqref{Hp:condit-assum-flat}, it follows that the matrices $A^{\pm}_\lambda(t)$ and the functions $\hat{\gamma}_\lambda(t), \gamma_\lambda^{\pm}(t)$ satisfy the estimates
\begin{align*}
    &K^{-1}I \leq A_\lambda^{\pm}(t) \leq K I,  \quad
    |{\p_t} A_\lambda^{\pm}(t)| \leq K \lambda^{-1}, \\
    &|\hat{\gamma}_\lambda(t)| \leq K, \quad K^{-1} \leq \gamma_\lambda^{\pm}(t) \leq K, \quad
    |{\p_t} \hat{\gamma}_\lambda(t)| \leq \lambda^{-1}, \quad |{\p_t} \gamma_\lambda^\pm(t)| \leq \lambda^{-1}.
\end{align*}
 Thus, the limiting transmission problem \eqref{eq:LimitTwoStefan} matches the model \eqref{eq:main1}, with coefficients satisfying the structural assumptions \eqref{Assump1}. This formalizes the role of \eqref{eq:main1} as the linearized limiting profile of \eqref{eq:twophaseStef}.

\section{Preliminaries}\label{sec:preli}
\subsection{Notation}

First we define the notation we will use throughout the work: $(x,t)\in \R^{n+1}$, we call $x\in \mathbb{R}^n$ the space variable and $t \in \R$  the time variable. A point $x\in \mathbb{R}^n$ will sometimes be written as $x=(x',x_n)$, where $x'\in \R^{n-1}$ and $x_n \in \R$.
For a function $u(x,t)$, we denote by $\nabla u$ the gradient of $u$ with respect to $x$, $D^2 u$ the Hessian with respect to $x$, $\nabla' u$ the gradient with respect to $x'$, and $\p_t u$ the time derivative of $u$.  For a multi-index $m \in \mathbb{N}^n$, we denote the spatial derivative by $\partial_x^m u := \partial_{1}^{m_1} \dots \partial_{n}^{m_n} u$.
We call a \textit{parabolic cylinder} any set of the form 
 $$\mathcal{P}=U \times (t_1,t_2],$$
 where $U$ is a smooth bounded domain in $\R^n$ and $t_1 < t_2$. We denote by $\partial_p \mathcal{P}$ the \textit{parabolic boundary} of $\mathcal{P}$, i.e., $\partial_p {\mathcal{P}}=\left( U\times \{t_1\}  \right)  \cup   \left( \partial U \times [t_1,t_2] \right) $.
 
Given $(x,t), (y,s)\in \R^{n+1}$, we define the \textit{parabolic distance} as
$$
d_p((x,t), (y,s)) :=\big(|x-y|^2 + |t-s|\big)^{1/2}.
$$
Here and henceforth, given $r >0$, we set 
\begin{align*}
    Q_r &:= (-r,r)^n, & Q_r^{+} &:= Q_r \cap \{x_n > 0\}, \\
    Q_r^{-} &:= Q_r \cap \{x_n < 0\}, & Q_r(x_0) &:= x_0 + Q_r, \\
    \C_r &:= Q_r \times (-r,0], & \C_r^{\pm} &:= Q_r^{\pm} \times (-r,0], \\ 
    \C_r' &:= (Q_r \cap \{x_n=0 \}) \times (-r,0], & \C_r(x_0,t_0) &:= (x_0,t_0) + \C_r, \\
    \P_r &:= Q_r \times (-r^2,0], & \P_r^{\pm} &:= Q_r^{\pm} \times (-r^2,0], \\ 
    \P_r' &:= (Q_r \cap \{x_n=0 \}) \times (-r^2,0], & \P_r(x_0,t_0) &:= (x_0,t_0) + \P_r.
\end{align*}
Throughout the paper $\P_r$ and $\C_r$ respectively denote the parabolic and hyperbolic cylinders. 

We also define the Dirichlet boundary of $\C_r^{\pm}$ as 
\[
\p_D \C_r^{\pm}:= \p_p \C_r^{\pm} \cap \left( \{t=-r\} \cup_{i=1}^{n-1} \{ |x_i|=r \} \cup \{ x_n=\pm r \}\right).
\]

This represents the points in the parabolic boundary of $\C_r^{\pm}$ excluding the points of $\C_r'$.

Throughout the paper we say that a constant is universal if it depends only on $n$ and $K$. In what follows, $c, \, c_1, \, c_2,\, C, \, C_1,\, C_2, ...$ denote universal constants that may vary from line to line.

\subsection{Function spaces}
Let $C^{0}(\mathcal{P})$ be the set of continuous function in ${\mathcal{P}}$. Given $i,j \in \mathbb{N}$, $C_x^i(\mathcal{P})$ is the class of functions $u$ such that $\p_x^m  u \in C^0(\mathcal{P})$ for all $m$ multi-index, $|m| \leq i$ and $C_t^j(\mathcal{P})$ is the class of functions $u$ such that $\p_t^l  u \in C^0(\mathcal{P})$
for all integer $l \leq j$.  Given $k \in \mathbb{N}$, $C^k(\mathcal{P})$ is the class of functions $u$ such that $u \in C_x^i(\mathcal{P})\cap C_t^j(\mathcal{P}) $ such that $i + 2j \leq k$

For ${ \alpha} \in (0,1]$ and $u\in C^{0}(\mathcal{P})$, we define the parabolic H\"older semi-norm as
\begin{equation*}
    [u]_{C^{0,\alpha}(\mathcal{P})} := \sup_{\substack{(x,t)\neq (y,s)}} \frac{\nor{u(x,t)-u(y,s)}}{d_p((x,t),(y,s))^{{ \alpha}}},
\end{equation*}
and the parabolic H\"older norm as
\begin{equation*}
    \norm{u}_{C^{0,{ \alpha}}(\mathcal{P})} := \norm{u}_{L^{\infty}(\mathcal{P})} +[u]_{C^{0,{ \alpha}}({\mathcal{P}})}.
\end{equation*}
Further, we define the H\"older semi-norm in $t$ to be
\begin{equation*}
[u]_{C_t^{0,{ \alpha}}({\mathcal{P}})} := \sup_{ \substack{(x,t)\neq (x,s)}} \frac{|u(x,t)-u(x,s)|}{|t-s|^{ \alpha}}.
\end{equation*}
In particular it follows that if $u \in C^{0,{ \alpha}}(\mathcal{P})$ then $u$ is ${ \alpha}-$H\"older continuous in $x$ and ${ \alpha}/2-$H\"older continuous in $t$.
Introduce now, for $k \in \mathbb{N}$, the $C^{k,{ \alpha}}(\mathcal{P})$ space as the class of functions $u \in C(\mathcal{P})$ such that
\begin{align*}
      \|u\|_{C^{k,{ \alpha}}(\mathcal{P})} &:= \sum_{|m| + 2 j \leq k} \|\p_x^m \p_t^j u\|_{L^\infty(\mathcal{P})} + \sum_{|m| + 2j = k} [\p_x^m \p_t^j u]_{C^{0,{ \alpha}}(\mathcal{P})} \\
      &\qquad\qquad +  \sum_{|m| + 2 j = k-1} [\p_x^m \p_t^j u]_{C_t^{0,(1+{ \alpha})/2}(\mathcal{P})} <\infty. 
 \end{align*}
\subsection{Viscosity solutions}
Now  we define what we mean by viscosity solution to \eqref{eq:main1}.
\begin{Definition}  \label{def:visc}
We say that $u \in USC(\C_1)$ is a viscosity subsolution to \eqref{eq:main1} if the following holds:
\begin{itemize}
    \item[(i)] If $(x_0,t_0) \in \C_1^{\pm}$ and $\varphi \in C^{2}(\C_\delta(x_0,t_0) )$ touches $u$ from above at $(x_0,t_0)$ then 
    \[ 
   \lambda \p_t \varphi(x_0,t_0) - \tr(A^\pm(t_0) D^2 \varphi(x_0,t_0)) \leq 0 
    \]
    \item[(ii)] If $(x_0,t_0) \in \C_1'$ and  $\varphi \in C_x^{1}(\overline{ \C_\delta^+(x_0,t_0)}) \cap  C_t^{1}(\overline{ \C_\delta^+(x_0,t_0)})$ and $\varphi \in C_x^{1}(\overline{ \C_\delta^-(x_0,t_0)}) \cap  C_t^{1}(\overline{ \C_\delta^-(x_0,t_0)})$  touches $u$ from above  then 
    \[
\p_t\varphi(x_0,t_0) \leq  -\hat{\gamma}(t_0) \cdot  \nabla' \varphi(x_0,t_0)  + \gamma^+( t_0) \p_n \varphi^+(x_0,t_0) - \gamma^-(t_0) \p_n \varphi^-(x_0,t_0)  .
    \]
\end{itemize}
The notions of viscosity supersolution and solution are defined as usual. Moreover if the inequalities in the definition are strict, we refer to them as strict subsolution or  supersolution respectively.
\end{Definition} 

Notice that in Definition \ref{def:visc} (ii), test functions are required to be of class $C_x^1$ up to the interface $\mathcal{C}_1'$, which is the natural regularity for the transmission condition to be classically defined. However, by adding a quadratic penalization term and a standard approximation argument, we can restrict the test functions in (ii) to be $C_x^2$ up to the boundary yielding an equivalent definition. Throughout the paper, we will freely assume test functions at the interface to be $C_x^2$ from both sides.
    
In particular the operator $F^\pm(M,t):=\tr(A^\pm(t)M)$ belongs to a more general class of uniformly elliptic operators for $t \in (-1,0]$.
Indeed, we define the operator $F(\cdot,t):\mathcal{S}(n)\to\mathbb{R}$ to be uniformly $K$-elliptic for every $t \in (-1,0]$, i.e, there exists $K>1$ universal such that 
\[
	K^{-1} |N|\leq F(M+N,t)-F(M,t) \leq K|N|,
\]
for every $M,N\in\mathcal{S}(n)$, with $N\geq 0$.
Recall the definition of extremal Pucci operators with ellipticity constants $K^{-1}$ and $K$  
\begin{equation*}\label{Pucci} \mathcal M_K^+ (N) = \sup_{K^{-1} I \le A \le K I} \quad \tr (A N), \quad \quad  \quad \mathcal M_K^- (N) = \inf_{K^{-1} I \le A \le K I} \quad \tr ( A N).\end{equation*} 
Let $f^\pm \in L^\infty(\C_{1}^\pm)\cap C(\C_{1}^\pm)$, 
the class $\underline{S}(f^\pm)$ denotes the functions $u \in USC(\C_1)$ such that $\p_t u - \mathcal M_K^+ (D^2 u) \leq f^\pm $ in $\C_1^{\pm}$ and the definitions of the classes $\overline{S}(f^\pm), S (f^\pm)$ and $S^{*}(f^\pm)$ follow as in \cite{CC}.

 We also recall the Weak Harnack inequality for fully nonlinear parabolic equations  \cite[Corollary 4.14]{wang1992regularityI}.
    \begin{Lemma}[Parabolic Weak Harnack inequality] \label{Lm:WeakHarnackIneq}
        Let $f \in L^{\infty}(\P_1) \cap C(\P_1)$ and let $u \in \overline{S}(f)\mbox{ in } \P_1$ be a nonnegative function. 
        Then for every $\Omega \Subset Q_1$ smooth it holds
        \[
        \left(\fint_{\Omega \times(-3/4, -1/2)} u^{p_0}\right)^\frac{1}{p_0}\leq C_H\left(\inf_{\Omega\times(-1/4, 0)} u + \|f\|_{L^\infty(\P_1)}\right)
        \]
        where $p_0 \in (0,1),\, C_H>0$ depend only on $n, \, K$ and $\Omega$.
    \end{Lemma}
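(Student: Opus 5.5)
The statement immediately above is Lemma \ref{Lm:WeakHarnackIneq}, the parabolic weak Harnack inequality for the Pucci class $\overline{S}(f)$. It is quoted from \cite[Corollary 4.14]{wang1992regularityI}, so the plan is to reduce to that standard statement rather than reprove Krylov–Safonov theory. The only genuine work is passing from Wang's cylinder geometry to an arbitrary smooth $\Omega \Subset Q_1$, and from his time windows to the windows $(-3/4,-1/2)$ and $(-1/4,0)$ used here.

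First I will fix the base case. Let $u\ge0$ with $\partial_t u - \mathcal M_K^-(D^2u)\ge -|f|$ in $\P_1$ in the viscosity sense. Wang's corollary gives, for a small cube $Q_\rho(x_0)$ with $Q_{2\rho}(x_0)\subset Q_1$ and properly nested time intervals,
\[
\Bigl(\fint_{Q_\rho(x_0)\times(s_1,s_2)} u^{p_0}\Bigr)^{1/p_0}\le C\Bigl(\inf_{Q_\rho(x_0)\times(s_3,s_4)}u+\|f\|_{L^\infty}\Bigr),
\]
whenever $s_2<s_3$ and the gap between the two windows is comparable to $\rho^2$. This follows by the parabolic scaling $u_\rho(x,t)=u(x_0+\rho x,t_0+\rho^2 t)$, which preserves the class $\overline S$ with $f$ replaced by $\rho^2 f$. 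The underlying ingredients are the ABP–Krylov–Tso estimate and the measure-decay ($L^\varepsilon$) lemma obtained by Calderón–Zygmund cube decomposition, and I take these from \cite{wang1992regularityI}.

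Second, I will cover $\overline\Omega$ by finitely many cubes $Q_\rho(x_i)$, with $\rho$ comparable to $\operatorname{dist}(\Omega,\partial Q_1)$, and chain them in two ways. Horizontally, neighbouring cubes overlap, so the infimum over a later slab in one cube controls the $L^{p_0}$ average over an earlier slab in its neighbour. Vertically, the time interval is covered by a chain of windows of length $\sim\rho^2$ running from $(-3/4,-1/2)$ forward to $(-1/4,0)$. At each link I will use the elementary inequality $\inf_A u\le(\fint_A u^{p_0})^{1/p_0}$. The number of links $N$ depends only on $\rho$, hence only on $\Omega$, $n$ and $K$. Composing the estimates therefore gives the stated inequality with $C_H\sim C^N$, and a further factor arises because the $\|f\|$ terms accumulate additively along the chain.

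The main obstacle is the bookkeeping in this chaining. The time lag has to be respected: every infimum must be taken on a window lying strictly after the corresponding average. I will handle this with the standard waiting-time argument, arranging the chain so that the earliest window is averaged and the final infimum is taken on $(-1/4,0)$. The endpoint $t=0$ is included in the closed domain and causes no difficulty, since an infimum over a window already controls the final time slice by continuity of $u$. Because the result is classical, in the paper I would simply cite it, noting the dependence of $p_0$ and $C_H$ on $\Omega$.
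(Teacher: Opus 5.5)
Your proposal is correct and agrees with the paper, which gives no argument of its own and simply quotes \cite[Corollary 4.14]{wang1992regularityI}. Your parabolic rescaling, finite covering and time-chaining is exactly the routine step needed to pass from Wang's cylinders to a general smooth $\Omega \Subset Q_1$ and the windows $(-3/4,-1/2)$, $(-1/4,0)$. The only thing to drop is the appeal to continuity of $u$ at $t=0$: elements of $\overline{S}(f)$ are merely lower semicontinuous. The remark is also unnecessary, because the infimum in the statement is over the open interval $(-1/4,0)$, which you can cover by rescaled windows whose top time lies in $\P_1 = Q_1\times(-1,0]$.
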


It will be useful to introduce the following class of solutions to extremal transmission problems which includes \eqref{eq:main1}. More precisely we denote by $\underline{S}_{\lambda,T}(f^{\pm})$ the functions $u \in USC(\C_1)$ that solve
\begin{equation} \label{eq:subsol}
    \begin{cases}
        \lambda \p_t u - \mathcal M_K^+ (D^2 u) \leq f^\pm  & \text{ in } \C_1^{\pm}, \\ 
          \p_t u \leq K |\nabla' u| + K\big( (\p_n u^+)_+ + (\p_n u^-)_- \big) - K^{-1}\big( (\p_n u^+)_- + (\p_n u^-)_+ \big) & \text{ on }\C_{1}'.
    \end{cases}
\end{equation}
In a similar way, we introduce the class $\overline{S}_{\lambda,T}(f^{\pm})$ the functions $u \in LSC(\C_1)$ that solve
\begin{equation} \label{eq:supersol}
    \begin{cases}
        \lambda \p_t u - \mathcal M_K^- (D^2 u) \geq f^\pm  & \text{ in } \C_1^{\pm}, \\ 
          \p_t u \geq  -K |\nabla' u| + K^{-1}\big( (\p_n u^+)_+ + (\p_n u^-)_- \big) - K\big( (\p_n u^+)_- + (\p_n u^-)_+ \big)& \text{ on }\C_{1}'.
    \end{cases}
\end{equation}
Analogously as above, we define
\begin{align*}
    &S_{\lambda,T}(f^{\pm}):=  \underline{S}_{\lambda,T}(f^{\pm})  \cap  \overline{S}_{\lambda,T}(f^{\pm})\\
    &S^*_{\lambda,T}(f^{\pm}):=\underline{S}_{\lambda,T}(-\|f^{\pm}\|_{L^{\infty}})  \cap \overline{S}_{\lambda,T}(\|f^{\pm}\|_{L^{\infty}}).
\end{align*}

Up to dividing the solution by a suitable positive constant 
we can assume without loss of generality that $\|u\|_{L^\infty(\C_1)} \leq 1$ and $\|f^\pm\|_{L^\infty(\C_1)} \leq \varepsilon_0$ for some small constant $\varepsilon_0 > 0$.

\section{Proof of Theorem \ref{Lm:Harnack}: Harnack Inequality} \label{sec:harnack}
In this section we derive the Harnack inequality of Theorem \ref{Lm:Harnack} for elements of the class $S_{\lambda,T}(f^{\pm})$. In fact, Theorem \ref{Lm:Harnack} follows directly from the following result. 

\begin{Lemma}\label{Lm:Harnack1}
    Let $u\in S^*_{\lambda,T}(f^{\pm})$  in $\C_1$  with $\|f^\pm\|_{L^\infty(\C_1)} \leq \varepsilon_0$ and such that $0\leq u  \leq 1$ in $\C_1$.  Then 
\[
\osc_{\C_{1/2}} u \leq 1-c,
\]
 where $\eps_0\leq c_1 \lambda$ and $c,c_1>0$ are small universal constants.
\end{Lemma}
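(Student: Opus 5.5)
Since $\osc_{\C_{1/2}} u \leq 1-c$ follows once $u\geq c$ or $u\leq 1-c$ on $\C_{1/2}$, and $1-u$ lies in the same class (the classes $\underline{S}_{\lambda,T}$ and $\overline{S}_{\lambda,T}$ swap under $u\mapsto 1-u$, and $\|f^\pm\|$ is unchanged), it suffices to prove one-sided positivity. Concretely, suppose (up to replacing $u$ by $1-u$) that $u\geq 1/2$ on a substantial portion of a fixed ``reference'' region, say at a point $(\bar x,\bar t)$ with $\bar x_n = 1/2$ and $\bar t$ near the bottom of $\C_1$. I would then show that $u\geq c$ in $\C_{1/2}$. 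The natural dichotomy is on the set $\{u\geq 1/2\}\cap \left(Q_{1/4}(\tfrac12 e_n)\times(-1,-3/4)\right)$, either in $\{x_n>0\}$ or in $\{x_n<0\}$. By symmetry of the structure under $x_n\mapsto -x_n$ (which swaps the roles of the two one-sided conditions), we may assume that this happens in the positive phase.

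\textbf{Step 1: interior spreading.} I would rescale time by $\lambda$ in each phase: $\tilde u(x,s)=u(x,\lambda s)$ satisfies a standard Pucci class inequality with right-hand side $\lambda^{-1}\varepsilon_0\le c_1$. Interior parabolic cylinders of size $r$ in $x$ have real time length $\lambda r^2\ll r$. Applying the Weak Harnack inequality Lemma~\ref{Lm:WeakHarnackIneq} repeatedly in a chain of such cylinders in $\{x_n>\delta\}$, I would propagate the positivity $u\geq c_2$ to all of $\{x_n\geq \delta\}\cap \C_{3/4}$ for times later than the reference time. This requires covering a hyperbolic time interval of order $1$ by $O(\lambda^{-1})$ steps. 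Since iterating the Weak Harnack inequality $\lambda^{-1}$ times degenerates the constant, I would instead use a barrier for the positive phase. The function $u$ is a supersolution and, away from the interface, the stationary-in-time profile works: a barrier $\phi(x)$ with $\mathcal M_K^-(D^2\phi)\geq$ positive, and time-independent, is a subsolution of $\lambda\p_t-\mathcal M^-$ regardless of $\lambda$. Comparing $u$ with $c\,\phi$ on $\{x_n>\delta\}$ then keeps the positivity uniformly in time. The Weak Harnack inequality is only used to start from a measure-type condition and obtain a pointwise lower bound on a small ball.

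\textbf{Step 2: crossing the interface.} This is the main obstacle. I would build a two-phase barrier $w(x,t)$ that is a strict subsolution of \eqref{eq:supersol}'s reverse, i.e.\ it satisfies \eqref{eq:subsol} strictly, on $\C_{3/4}$ minus the region where positivity is already known. The barrier should be $\leq 0$ on the remaining Dirichlet boundary and $\leq c_2$ on $\{x_n=\delta\}$, and it should be positive in $\C_{1/2}$. My first try is
\[
w = \eta(t)\,\big(\psi^\pm(x)\big), \qquad \psi^+ \text{ increasing in } x_n,\ \psi^- \text{ increasing in } x_n,
\]
with $\psi^\pm$ built from a De Silva-type radial function $|x-x^*|^{-\gamma}$ adjusted so that $\mathcal M^-_K(D^2\psi^\pm)\ge 1$ (hence, being time-independent up to a slow factor, it is a subsolution with a margin absorbing $\varepsilon_0\le c_1\lambda$). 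At $x_n=0$ both one-sided normal derivatives are positive, and the transmission lower bound then gives $\p_t u\geq K^{-1}\p_n u^+ - K\p_n u^- - K|\nabla' u|$. The plan is to make $\p_n\psi^+\gg K^2(\p_n\psi^-+|\nabla'\psi|)$ on the interface, for instance by making $\psi^-$ very flat, i.e.\ small amplitude in the negative phase. Then the interface inequality forces a barrier with $\p_t w\le$ something positive, so a slowly increasing-in-time $w$ is admissible. The interface thus acts as a source transporting positivity into $\{x_n<0\}$ at unit hyperbolic speed, which matches the $\C_r$ scaling.

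\textbf{Step 3: conclusion.} Using the comparison between $u$ and $w$ in the viscosity sense (a touching argument with Definition~\ref{def:visc}, which applies since $w$ is strict), I obtain $u\ge w\ge c$ on $\C_{1/2}$. Choosing $c_1$ small ensures the source term never destroys the strictness of $w$. The delicate point is reconciling the exponents in Step 2: the negative-phase piece must still be a subsolution of $\lambda\p_t-\mathcal M^-$ while growing in time. I would handle this by using a profile of the form $\psi^-(x_n + \mu t)$ travelling with hyperbolic speed $\mu$, whose $\lambda\p_t$ contribution is $O(\lambda)$ and hence negligible against $\mathcal M^-_K(D^2\psi^-)\gtrsim 1$.
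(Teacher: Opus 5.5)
There is a genuine gap. It sits in Step 1, and Step 2 inherits it. A time-independent barrier cannot keep $u\ge c\,\phi$ on $\{x_n>\delta\}$ uniformly in time. If $\Mm(D^2\phi)\ge 0$ and $\phi\le 0$ on the boundary of the comparison domain, the maximum principle gives $\phi\le 0$. So a nontrivial $\phi$ needs $u\ge c$ on part of that boundary for all later times. Lemma~\ref{Lm:WeakHarnackIneq}, after the rescaling $t\mapsto t_0+\lambda t$, gives positivity only for a time of order $\lambda$.

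Away from the interface, positivity decays on the fast scale, like $e^{-c(t-t_0)/\lambda}$. The only slow reservoir is the interface, whose value moves on time scale one, and a single window of interior positivity feeds it only $O(\lambda)$. To see this, take the one-dimensional model: $A^\pm=I$, $\gamma^\pm=1$, $\hat\gamma=0$, $u=u(x_n,t)$, zero data at $x_n=\pm1$. Testing with the weight $1-x_n$ on $(0,1)$ gives, for nonnegative solutions, $\int_{t_0}^{0}\p_n u^+(0,t)\,dt\le\lambda\int_0^1(1-x_n)\,u(x_n,t_0)\,dx_n$. Hence the solution starting from a bump near $x_n=1/2$ at $t=-1$ is $O(\lambda)$ on $\C_{1/2}$, even though it is $\ge 1/2$ on most of $Q_{1/4}(\tfrac12 e_n)\times(-1,-1+c\lambda)$. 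So Step 2 has no Dirichlet datum on $\{x_n=\delta\}$ over a time interval of order one, and no single good window can give $u\ge c$ in $\C_{1/2}$.

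What is missing is an accumulation mechanism, which is the paper's route.
\begin{itemize}
\item[(a)] Take $\phi$ with $\Mm(D^2\phi)=0$ in both phases, $\phi=\eta$ on the interface and $\phi=0$ on the outer boundary. By the Hopf lemma, $s(t)\phi(x)$ with $s'=-C_0s$ is a subsolution of \eqref{eq:supersol}, so it decays only on time scale one (first part of Lemma~\ref{Lm:Barrier_improvement}).
\item[(b)] Each window $[t_k,t_k+\lambda]$ in which the measure condition holds raises the coefficient by $c_0\lambda$. This uses Weak Harnack in a core region, then the narrow-region barriers $\psi,\chi$, which produce a positive flux $\p_n z^+$ letting the interface value grow, then a transfer to $Q_1^-$.
\item[(c)] With $t_k=-1+k\lambda$, the dichotomy $u$ versus $1-u$ is applied window by window. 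One of them wins on at least half of the $N\sim 1/(2\lambda)$ windows, and the worst case gives $s\ge\frac{c_0}{8}(1-C_0\lambda)^{N/2}\ge c$.
\item[(d)] The losses $\frac{\eps_0}{\lambda}(t-t_0)$ and the shifts $u_k=u+k\eps_0$ are absorbed by $\eps_0\le c_1\lambda$.
\end{itemize}

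Your reference region of time length $1/4$ would, by Fubini and a suitable placement of the windows, supply good windows on a positive fraction of them. So your single global dichotomy could replace the per-window one, but only if combined with this summation of $O(\lambda)$ gains against $e^{-C_0 t}$ decay.

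The travelling profile $\psi^-(x_n+\mu t)$ addresses a non-issue. The negative phase equilibrates on time scale $\lambda$, so interface positivity reaches it essentially instantly. The real constraint is how fast the interface value itself can grow.
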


By applying Lemma \ref{Lm:Harnack1} to 
\[
v(x,t) := \frac{u(x,t) - \inf_{\C_1} u}{M}.
\]
with $M= \osc_{\C_1} u + \frac{1}{c_1 \lambda}\|f^\pm\|_{L^\infty(\C_1)}$ we obtain Theorem \ref{Lm:Harnack}. 

We devote the rest of the section to the proof of Lemma \ref{Lm:Harnack1}.
 
We first introduce the following barrier in the space variables.
Let $\sigma>0$ be a dimensional constant such that $|Q_1^+\setminus Q_{1-\sigma}^+| \leq \frac{1}{16}|Q_1^+|$. Let $\Omega^+$ be a smooth domain such that $ \overline{Q_{1-2\sigma}^+} \subset  \overline{\Omega^+} \subset \overline{Q_{1-\sigma}^+} $ and define $T:= \{ x_n=0 \} \cap Q_{1-2\sigma} \subset \p \Omega^+$.
We consider $\eta$ to be a cut-off function in the $x'$-variables such that $\supp \eta \subset Q_{5/8}'$ and $ \eta \equiv 1 \subset Q_{1/2}'$. Define $\phi^+$ to satisfy the following Dirichlet problem in $\Omega^+$
\begin{equation} \label{eq:barrierphi}
\begin{cases}
           \mathcal M_K^- (D^2 \phi^+) = 0 & \text{ in }\Omega^+,\\
        \phi^+ =0  & \text{ in }\p \Omega^+ \setminus T, \\
        \phi^{+} = \eta  & \text{ in } T.
\end{cases}
\end{equation}
From the maximum principle it follows that $0 \leq \phi^+ \leq 1$ and $\phi^+ \geq \bar c$ on $Q_{1/2}^+$. By Hopf lemma we also have $\p_n \phi^+ >0$ on $\{ x_n = 0\} \cap \{ \phi^+ =0\}$. Furthermore, since $\Mm$ is concave, in view of \cite[Chapter $9$]{CC}, we have that $\phi \in C^{2,{ \alpha}}(\Omega^+)$ with universal estimates. 
On the set $\Omega^{-}$, which we define to be the reflection of $\Omega^+$ with respect to $\{x_n=0\}$, we set $\phi^{-}(x',x_n)=\phi^{+}(x',-x_n)$ and call
\[
\phi(x) = \phi^+(x) \chi_{\overline{\Omega^+}}(x) + \phi^-(x) \chi_{\Omega^-}(x).
\]
It is straightforward to notice that $D^2 \phi^+$ and $D^2 \phi^-$ have the same eigenvalues since $D^2\phi^- = OD^2\phi^+O^T $, where $O = \text{diag}(1, 1, \dots, 1, -1)$ ($O$ is the orthogonal matrix representing the reflection across the hyperplane defined by $\{x_n=0\}$). Thus $ \mathcal M_K^- (D^2 \phi^-)=0$ in $\Omega^-$.

The following Lemma \ref{Lm:Barrier_improvement} is the key propagation result to obtain the Harnack inequality. The first part  establishes that a positive multiple of the elliptic barrier \(\phi\) can be transported forward in time, up to a small error
\(\varepsilon_0/\lambda\). 
The second part shows that if, during a time
interval of length comparable to \(\lambda\), the solution is positive
on a set of large measure in one side of the interface, then the coefficient of the
barrier improves by a universal multiple of \(\lambda\). The proof first
uses the weak Harnack inequality away from the interface and then
transfers the gain across the interface by means of the transmission
condition.

\begin{Lemma} \label{Lm:Barrier_improvement}
    Let $u \in \overline{S}_{\lambda,T}(f^{\pm})$ in $\C_1$  and $u\in C(\C_1)$ with  $\|f^\pm\|_{L^\infty(\C_1)} \leq \varepsilon_0$ and such that $u \geq 0$ in $\C_1$.
If for some $t_0 \in (-1,0]$ and $s_0 \geq 0$ it holds that 
\begin{equation} \label{eq:initial_time}
    u(x,t_0) \geq s_0 \phi(x) \quad \text{in }Q_1,
\end{equation} 
then 
\begin{equation} \label{eq:boundspacetime}
    u(x,t) \geq s(t) \phi(x)  {- \frac{\eps_0}{\lambda}(t-t_0)}  \quad \text{in }Q_1 \times [t_0,0],
\end{equation}
    with 
    \[
    \p_t s(t)=-C_0s(t), \quad s(t_0)=s_0, \quad C_0\text{ large universal.}
    \]
Moreover, if $s_0 \leq c_0$ for a  small universal constant $c_0$ and for $t_0<-\lambda$ it holds
\begin{equation} \label{eq:meas_est_harn}
    \left| \left\{ u \geq \frac{1}{4} \right\} \cap \left( Q_1^+ \times \left[t_0, t_0+\frac{\lambda}{8} \right]\right) \right| \geq \frac{1}{2} \left|Q_1^+ \times \left[t_0, t_0+\frac{\lambda}{8} \right] \right|,
\end{equation}
    then
    \begin{equation} \label{eq:improv2}
        u(x,t_0 + \lambda) \geq (s_0+c_0 \lambda)\phi(x) { - \eps_0}  \quad \text{ in }Q_{1}.
    \end{equation}
\end{Lemma}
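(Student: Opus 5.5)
The plan is a comparison argument with explicit barriers. For the first part, consider
\[
w(x,t):=s(t)\phi(x)-\frac{\eps_0}{\lambda}(t-t_0),\qquad s(t)=s_0e^{-C_0(t-t_0)},
\]
on $Q_1\times[t_0,0]$. We check that $w$ is a strict subsolution of the extremal problem \eqref{eq:subsol} with right-hand side $\eps_0$ and lies below $u$ on the parabolic boundary; then we compare with $u\in\overline S_{\lambda,T}(f^\pm)$ via the usual first-touching argument.

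\emph{Interior.} In $\Omega^\pm$ we have
\[
\lambda\p_t w-\Mp(D^2w)\le \lambda s'\phi-\eps_0-s\,\Mm(D^2\phi)=-\lambda C_0 s\phi-\eps_0<f^\pm .
\]
Outside $\overline{\Omega^+}\cup\Omega^-$ we have $\phi=0$ and $w\le 0\le u$. The boundary data are fine since $\phi=0$ on $\p Q_1$, and at $t=t_0$ we have $w=s_0\phi\le u$.

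\emph{Interface.} On $T$, $\phi=\eta$ and $\nabla'\phi=\nabla'\eta$ is bounded. By reflection, $\p_n\phi^-=-\p_n\phi^+$, so $(\p_n w^+)$ and $(\p_n w^-)$ have opposite signs; both lie in the ``good'' positive combination when $\p_n\phi^+\ge 0$, and are controlled by $C s$ in any case thanks to the $C^{2,\alpha}$ estimates for $\phi$. Thus the interface subsolution inequality reads
\[
\p_t w = -C_0 s\eta-\frac{\eps_0}{\lambda}\le -K s|\nabla'\eta| - C s + \ldots ,
\]
which needs $C_0\eta\ge K|\nabla'\eta|+C$. This fails where $\eta$ is small. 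I would fix this by choosing $\eta$ so that $|\nabla'\eta|\le C\eta^{1/2}$, or better by replacing $\eta$ with a power of a cutoff. Near $\{\eta=0\}$ on $T$, Hopf gives $\p_n\phi^+>0$, hence $(\p_n w^+)_+$ and $(\p_n w^-)_-$ are positive and contribute with the favorable sign $-K^{-1}$ in the supersolution inequality for $u$. Concretely, at a touching point the test function $w$ must violate \eqref{eq:supersol}:
\[
\p_t w < -K|\nabla' w|+K^{-1}\bigl((\p_nw^+)_++(\p_nw^-)_-\bigr)-K\bigl(\ldots\bigr).
\]
The Hopf term helps here only if it enters as $+K^{-1}\p_n\phi^+$ on the right. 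That sign is not automatically favorable, so this is where care is needed: I expect to need $\p_n\phi^+\ge c>0$ wherever $\eta$ is small, and a quantitative bound elsewhere, achieved by a careful choice of $\eta$ and $\Omega^+$. \textbf{This interface verification is the main technical obstacle of part one.}

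For the second part, first apply the weak Harnack inequality, Lemma \ref{Lm:WeakHarnackIneq}, to $u$ in $Q_1^+$, rescaled in time by $\lambda$: the equation $\lambda\p_t u-\Mm(D^2u)\ge-\eps_0$ becomes a standard one on a time interval of unit length, with right-hand side of size $\eps_0\le c_1\lambda$. The measure hypothesis \eqref{eq:meas_est_harn} then gives $u\ge c$ on $\overline{\Omega'}\times[t_0+\lambda/4,t_0+\lambda]$ for some $\Omega'\Subset Q_1^+$ containing $Q_{1-3\sigma}^+\cap\{x_n\ge\delta\}$.

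Next, to transfer this gain to the interface and to the negative side, build a second barrier
\[
\psi=s(t)\phi+c_2(t-t_1)_+\,\lambda^{-1}\cdot\lambda\,\Phi(x),
\]
where $\Phi\ge0$ solves $\Mm(D^2\Phi)\ge C\Phi$ near the interface, vanishes on $\p Q_1$, and satisfies $\p_n\Phi^+>0$ and $\p_n\Phi^-<0$ on $T$. The positive-side lower bound from the previous step acts as boundary data on $\p\Omega'$. On the interface, the transmission inequality forces $\p_t u\gtrsim K^{-1}\p_n u^+$, so the barrier can grow at rate $O(1)$ in $t$, i.e.\ by $c\lambda$ over a time $\lambda$. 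In the interior, the growth rate $c_2$ must be compatible with $\lambda\p_t$, which is small, so there is no obstruction. Comparing on $[t_0+\lambda/4,t_0+\lambda]$ and using $s_0\le c_0$ to keep the exponential decay $s_0(1-e^{-C_0\lambda})\le C_0c_0\lambda$ below the gain yields \eqref{eq:improv2}. The error term $\eps_0$ comes from $\frac{\eps_0}{\lambda}\lambda$.
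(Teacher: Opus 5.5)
Your first part uses the same barrier as the paper, $w=s(t)\phi-\frac{\varepsilon_0}{\lambda}(t-t_0)$. The interface check you single out as the main obstacle closes at once, because the sign you doubt is favorable. On $\{x_n=0\}$ one has $\partial_n w^{\pm}=\pm s\,\partial_n\phi^+$. So the right-hand side of \eqref{eq:supersol} evaluated on $w$ equals $s\bigl(-K|\nabla'\phi|+2K^{-1}(\partial_n\phi^+)_+-2K(\partial_n\phi^+)_-\bigr)$, and you need $\partial_t w=-C_0s\phi-\varepsilon_0/\lambda$ to lie below it. On $\{\phi=0\}\cap\{x_n=0\}$ you have $\nabla'\phi=0$ (a minimum of $\eta\ge 0$) and $\partial_n\phi^+>0$ (Hopf). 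Hence the bracket is positive there and on a neighbourhood, where $\partial_t w\le 0$ already suffices. On the rest of the interface $\phi\ge c>0$, and a large $C_0$ absorbs the bounded bracket. No change of $\eta$ is needed. Also, the comparison is with the reversed inequalities of \eqref{eq:supersol}, which involve $\mathcal{M}_K^-$, not with \eqref{eq:subsol}.

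The genuine gap is the second barrier $s(t)\phi+c_2(t-t_1)_+\Phi$, which cannot be a subsolution. Take $s_0=0$, the case used when the lemma is iterated.
\begin{itemize}
\item In $\Omega^\pm$ you need $\lambda\Phi\le (t-t_1)\,\mathcal{M}_K^-(D^2\Phi)$. This fails as $t\downarrow t_1$ wherever $\Phi>0$, and $\Phi$ must be positive just above $T$ if $\partial_n\Phi^+>0$.
\item On $T$ the transmission inequality requires $c_2\Phi\le \varepsilon_0/\lambda+c_2(t-t_1)\bigl(K^{-1}(\partial_n\Phi^+-\partial_n\Phi^-)-K|\nabla'\Phi|\bigr)$. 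As $t\downarrow t_1$ this forces $c_2\lambda\Phi\le\varepsilon_0$ on $T$, so there is no gain on $T$ beyond the error term.
\item On the negative side the required $\Phi$ does not even exist. There you know nothing about $u$ beyond the first part, so $\Phi$ must vanish on the boundary of the comparison region away from $T$. A nonnegative $\Phi$ with $\mathcal{M}_K^-(D^2\Phi)\ge 0$ then attains its maximum on $T$, where $\partial_n\Phi^-\ge 0$. This contradicts your requirement $\partial_n\Phi^-<0$.
\end{itemize}

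Your heuristic $\partial_t u\gtrsim K^{-1}\partial_n u^+$ becomes a barrier only once a lower bound with an $O(1)$ positive normal slope at $T$ is already in force. The paper obtains this in three stages.
\begin{itemize}
\item First, in the narrow region $\Omega^+\setminus D^+$ over a time $\lambda/8$, it compares with $q=s(t_3)\phi+c_2\bigl(\psi+\frac{t-t_3}{t_3-t_2}\bigr)-\frac{\varepsilon_0}{\lambda}(t-t_0)$. Here $\mathcal{M}_K^-(D^2\psi)\ge 16$, $\psi=0$ and $|\nabla\psi|\ge 1$ on $\partial\Omega^+$; note the negative initial offset and the use of the first part as datum on $T$. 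This produces the time-independent profile $c_2\psi$ with $\partial_n\psi\ge 1$ on $T$.
\item Only then does the two-sided barrier $z$ raise the interface values. It has coefficient $s(t_3)+c_3(t-t_3)$ in front of $\phi$ and $\tilde c_2\chi\le 0$ on the negative side, and the rate $c_3$ is paid for by $c_2\partial_n\psi$.
\item Finally the gain is carried from $T$ into $\Omega^-$ by a separate argument for $v=u-s\phi+\frac{\varepsilon_0}{\lambda}(t-t_0)\ge 0$. This uses a boundary-layer Dirichlet barrier with datum $c\lambda\phi$ on $T$, the weak Harnack inequality in $D^-$, and narrow-region barriers.
\end{itemize}
None of these stages appears in your proposal, and without them \eqref{eq:improv2} is not reached on $T$ or in $Q_1^-$.
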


\begin{proof} 
    Let $\Omega= \overline{\Omega^+} \cup \Omega^-$, define 
    \[
    w(x,t)=s(t)\phi(x)-\frac{\eps_0}{\lambda}(t-t_0) \quad \text{ in } \Omega \times [t_0,0] \] 
    and extend it by $-\frac{\eps_0}{\lambda}(t-t_0)$ in the rest of $Q_1 \times [t_0,0]$. For the first part of the claim we notice that if $s_0=0$ the statement holds trivially since $u \geq 0$.  For $s_0>0$, we want to compare $u$ and $w$ showing that the latter is a subsolution to \eqref{eq:supersol} in $\Omega \times [t_0,0]$. Note that  such comparison holds on $\p_D \left( \Omega \times [t_0,0]\right)$ since by construction $\phi$ vanishes on $ \p \Omega \setminus T$ and on $\{t=t_0 \}$ it  holds \eqref{eq:initial_time}.
    The interior equations in $\Omega^\pm \times (t_0,0]$ are satisfied since 
    \[
    \lambda \p_t w - \mathcal M_K^- (D^2 w) = \lambda s'(t) \phi(x) - \eps_0 - s(t)\mathcal M_K^- (D^2 \phi) \leq -\eps_0,
    \]
    using that $\p_t s \leq 0$ and $s\geq 0$.  On the transmission interface $\{ x_n=0\}$, we need to show that 
    \[ 
    \p_t w \leq -K |\nabla' w| + K^{-1}\left( (\p_n w^+)_+ + (\p_n w^-)_- \right) - K\left( (\p_n w^+)_- + (\p_n w^-)_+ \right). 
    \]
    Since on $\{ x_n=0\}$ we have $ \p_n w^-(x',0,t)=-s(t) \p_n \phi^+(x',0,t)$, the latter condition follows if we prove
    \[
    -K|\nabla' \phi| +2K^{-1} (\p_n \phi^+)_+ - 2K ( \p_n \phi^+)_- + C_0 \phi \geq 0.
    \]
    By Hopf lemma, we have that both $\p_n \phi^+ > 0$ and $|\nabla'\phi|=0$ hold on $\{ \phi = 0 \} \cap \{x_n = 0 \}$, which implies that
    \[
    -K|\nabla' \phi| +2K^{-1} (\p_n \phi^+)_+ - 2K (\p_n \phi^+)_- + C_0 \phi = 2K^{-1} \p_n \phi^+ >0 \quad \text{ in }\{ \phi = 0 \} \cap \{x_n = 0 \}. 
    \]
    The same inequality holds in a neighborhood of this set by continuity. In the rest of $\{x_n= 0\} \cap \Omega$ we can choose $C_0$ large enough since $\phi >0$ giving \eqref{eq:boundspacetime}.
    
    For the second part of the claim we divide the proof in two steps. In the first step we prove the bound on the upper part of the cylinder. In the second part we carry the improvement to the other side at a small universal distance to the interface. Then the estimate is spread in the whole lower part as in the first step.

\underline{\textit{Step 1.}} We first prove that \eqref{eq:improv2} holds in $Q_1^+$ at the time $t=t_0+\lambda$ following the argument of \cite[Lemma 6.2]{DFS23}.

Set \(t_i:=t_0+i\lambda/8\) for \(i=1,\dots,8\) and let $D^+$ be given by  
\[ 
D^+:= \{ x \in \Omega^+ : \operatorname{dist}(x,\p \Omega^+) > c_1\} 
\]
 where the value of $c_1$ is chosen universally small for the existence of the function $\psi$ in $\Omega^+ \setminus D^+$ such that 
 \[
    \mathcal M_K^- (D^2 \psi) \geq 16 \quad \text{ in }\Omega^+ \setminus \overline{D^+}, 
 \]
and
\[
\psi= 0, \ |\nabla \psi| \geq 1 \; \text{ on } \p \Omega^+, \quad \psi\leq 1 \; \text{ on } \p D^+.
\]
 By making $c_1$ smaller we ensure that $|\Omega^+\setminus D^+| \leq \frac{1}{16}|Q_1^+|$. Thus from \eqref{eq:meas_est_harn} we deduce that $\left|\{ u \geq \frac{1}{4}\}\cap ( D^+  \times \left[t_0, t_0+\frac{\lambda}{8} \right] ) \right| \geq \frac{1}{4} \left| Q_1^+ \times \left[t_0, t_0+\frac{\lambda}{8} \right] \right|$. 
If we define $v(x,t):=u(x,t_0+\lambda t)$, then we understand that $v \in \overline{S}(f^+)$ in $Q_1 \times (0,1]$. Hence, by Weak Harnack inequality of Lemma \ref{Lm:WeakHarnackIneq}, we obtain
\begin{equation} \label{eq:est_in_D+}
\begin{aligned}
    \inf_{D^+ \times (t_2,t_8)} u &=   \inf_{D^+ \times (\frac{1}{4},1)} v \geq C_H^{-1} \left(\frac{1}{|D^+\times(0,1/8)|} \int_{\{ v \geq \frac{1}{4}\} \cap  \left(D^+\times(0,1/8) \right)} v^{p_0}\right)^\frac{1}{p_0} -\eps_0\\ &=C_H^{-1} \left(\frac{1}{|D^+\times(t_0,t_1)|}\int_{\{ u \geq \frac{1}{4}\} \cap  \left(D^+ \times(t_0,t_1) \right)} u^{p_0}\right)^\frac{1}{p_0} -\eps_0 \geq 2c_2
    \end{aligned}
    \end{equation}
for $c_2 \leq 2^{-1/p_0-4}C_H^{-1}$ and $\eps_0\leq c_2$. In order to produce a bound in the remaining narrow region $(\Omega^+ \setminus D^+) \times [t_2,t_8]$, we compare with explicit barriers. In  $(\Omega^+ \setminus D^+) \times [t_2,t_3]$, we define
\[
q(x,t):= s(t_3) \phi(x) + c_2 \left( \psi(x) + \frac{t-t_3}{t_3-t_2}\right) {-\frac{\eps_0}{\lambda}(t-t_0)}.  
\]
On the boundary of the domain the inequality $q \leq u$ holds. Indeed, on $\p D^+$, since $s$ is decreasing, we have 
\[
q(x,t) \leq s(t_3) \phi + c_2 {-\frac{\eps_0}{\lambda}(t-t_0)}\leq s_0+c_2 \leq 2c_2 \leq u(x,t)
\]
for $s_0 \leq c_0$ sufficiently small and using \eqref{eq:est_in_D+}. Moreover for $x \in \p \Omega^+$ we know $\psi = 0$ and thus
\[
q(x,t) \leq s(t_3) \phi(x)  {-\frac{\eps_0}{\lambda}(t-t_0)} \leq s(t) \phi(x) {-\frac{\eps_0}{\lambda}(t-t_0)}\leq u(x,t),
\]
from the first part of the lemma. The same bound holds at $t=t_2$ using that $\psi \leq 1$.
We now check that $q$ is a subsolution for the interior equation. In $(\Omega^+ \setminus D^+) \times (t_2,t_3]$ we have
\begin{align*}
    \lambda \p_t q  &= \frac{\lambda c_2}{t_3-t_2} {-\eps_0} \leq 16c_2 -\eps_0 \leq c_2\mathcal{M}_K^-(D^2 \psi) -\eps_0 \\
    &\leq c_2  \mathcal{M}_K^-(D^2 \psi) 
    + \mathcal{M}_K^-((s(t_3))D^2 \phi) -\eps_0 \leq \mathcal{M}_K^-(D^2 q)-\eps_0.
\end{align*}
Hence, comparing $u$ and $q$ at $t=t_3$ we deduce that
\begin{equation} \label{eq:compare_u_q}
    u(x,t_3) \geq s(t_3) \phi(x) + c_2 \psi(x) {-\frac{3}{8}\eps_0}\quad \text{in } \Omega^+ \setminus D^+.
\end{equation}
We proceed comparing $u$ in $\left( \Omega \setminus D^+\right) \times[t_3,t_8]$ with the following classical strict subsolution to \eqref{eq:supersol}
\[
z(x,t):=
\begin{cases}
    \left(s(t_3) + c_3(t-t_3)\right) \phi(x) + c_2 \psi(x) {-\frac{\eps_0}{\lambda}(t-t_0)}, &\text{for } (x,t)\in \left( \Omega^+ \setminus D^+\right) \times[t_3,t_8]\\
    \left(s(t_3) + c_3(t-t_3)\right) \phi(x) + \tilde{c_2} \chi(x) {-\frac{\eps_0}{\lambda}(t-t_0)}, &\text{for } (x,t)\in  \overline{\Omega^-} \times[t_3,t_8]
\end{cases}
\]
with $\tilde c_2, c_3$ sufficiently small. Here, $\chi$ is the unique classical solution to
\[
    \mathcal M_K^-(D^2 \chi)=2 \quad \text{ in } \Omega^- \quad \text{ and } \quad \chi=0 \text{ on } \p\Omega^-.
\]
In particular, it satisfies $\chi\leq 0$, and on $\{x_n=0\}$ we have  $\nabla' \chi=0$, $0 <\p_n \chi\leq L$ where $L>0$ is universal.

From \eqref{eq:compare_u_q} and $\chi \leq 0$ we know that $u \geq z$ at time $t=t_3$. The same holds on $\p \Omega$ since both $\phi,\, \psi$  and $\chi$ vanish and $u\geq0$. Moreover, on $\p D^+$ we have
\[
z \leq s_0 + c_3\lambda+c_2 -\frac{\eps_0}{\lambda}(t-t_0) \leq 2c_2 \leq u,
\]
for $c_0$ and $c_3$ small enough by \eqref{eq:est_in_D+}.
Regarding the interior equations we have, on $\Omega^+\setminus \overline {D^+}$
\[
\lambda \p_t z = \lambda c_3 \phi -\eps_0 \leq c_3 -\eps_0 \leq c_2 \mathcal{M}_K^-(D^2 \psi) -\eps_0 \leq \mathcal{M}_K^-(D^2 z)-\eps_0.
\]
A similar computation verifies the condition in $\Omega^-$ for $c_3 \leq \tilde c_2$.

It remains to check the condition on $\{ x_n=0\}$. 
Using that $\p_n \psi \geq 1$ on $\p \Omega^+ \cap \{x_n=0 \}$ and $\p_n\chi\leq L$ we get
\begin{equation}
    \p_t z = c_3 \phi   -\frac{\eps_0}{\lambda} \leq \frac{c_2}{8} K^{-1} \p_n \psi.
\end{equation}
Moreover, since $\p_n \phi^+ >-C$, for $c_0$ and $c_3$ small, we obtain
\begin{align*}
    \p_n z^+&\geq - (s_0 + c_3\lambda)C + c_2 \p_n\psi \geq \frac{c_2}{2} \p_n\psi > 0 \\
    \p_n z^- &\leq (s_0 + c_3\lambda)C +\tilde c_2 L \leq \frac{c_2}{8}K^{-2} \p_n\psi,
\end{align*}
by choosing $\tilde c_2$ sufficiently small depending also on $c_2$.
Since $\nabla'\psi= 0,\nabla' \chi=0$ on $\{x_n=0 \}$ we also have
\[
K|\nabla' z| \leq (s_0 + c_3\lambda)K |\nabla' \phi| \leq \frac{c_2}{8} K^{-1} \p_n \psi .
 \]
Combining these estimates we reach 
\begin{align*}
   -K|\nabla 'z| &+ K^{-1}  \left( (\p_n z^+)_+ + (\p_n z^-)_- \right) - K \left( (\p_n z^+)_-  +(\p_n z^-)_+ \right) \\
   &\geq 
-\frac{c_2}{8} K^{-1} \p_n \psi  +\frac{c_2}{2}K^{-1} \p_n \psi  -\frac{c_2}{8} K^{-1} \p_n \psi \geq \frac{c_2}{4}K^{-1} \p_n \psi \geq \p_t z \quad \text{ on }\{x_n=0\}.
\end{align*}
Finally by comparing $u$ and $z$ and with \eqref{eq:est_in_D+}, we obtain for $t \in (t_3, t_8)$
\begin{equation} \label{eq:1111}
    u(x,t) \geq \begin{cases}
        \left(s(t_3) + c_3(t- t_3) \right) \phi(x) + c_2 \psi(x)  {-\frac{\eps_0}{\lambda}(t-t_0)} \quad &\text{ in } \Omega^+ \setminus D^+, \\
        2c_2 \quad &\text{ in } D^+, \\
    \end{cases}
\end{equation}
which, choosing $c_0$ small, gives
\begin{equation}
    u(x,t_0+\lambda) \geq \left( s_0 + c_0 \lambda\right) \phi(x) -\eps_0 \quad \text{ in  }Q_1^+. 
\end{equation}

\underline{\textit{Step 2.}}
Inspecting the proof of the first step, denoting by $\bar t= t_3 + \frac{\lambda}{16}$, from \eqref{eq:1111} we deduce that on the interface $\{x_n = 0\}$ it holds 
\begin{equation} \label{eq:1step_2}
    u(x,t) \geq \left( s(t) + c_3 \frac{\lambda}{16} \right) \phi(x) -\frac{\eps_0}{\lambda}(t-t_0)\quad \text{ for all } t \in \left[ \bar t, t_8\right],
\end{equation}
for a universal constant $c_3$ independent of $c_0$.
To transfer the estimate to the other side of the interface, we introduce the function
\[
v(x,t) := u(x,t) - s(t)\phi(x) + \frac{\eps_0}{\lambda}(t-t_0) \geq 0 \quad \text{ in } Q_1.
\]
By the first part of the lemma and since $s'(t) \leq 0$, $v$ is a nonnegative supersolution to the homogeneous equation $\lambda \p_t v - \mathcal M_K^- (D^2 v) \geq 0$ in $\Omega^- \times (t_0, 0]$. Moreover, \eqref{eq:1step_2} translates to the boundary estimate
\begin{equation} \label{eq:bound_v_interface}
    v(x,t) \geq c_3 \frac{\lambda}{16} \phi(x) \quad \text{ on } \p \Omega^- \cap \{ x_n = 0\} \text{ for } t \in [\bar t, t_8].
\end{equation}

We define the set $U:= \Omega^- \cap \{ x_n \geq -c_4\}$ for $c_4$ small universal such that there exists a nonnegative $C^2$ function $\xi$ defined in $U$ satisfying 
\begin{equation}
    \begin{cases}
        \mathcal M_K^- (D^2 \xi) \geq 16 \quad &\text{ in }U, \\
        \xi = \phi \quad &\text{ on } \p U \cap \{x_n=0\},  \\
        \xi= 0 \quad &\text{ on } \p U \setminus \{x_n=0\}.
    \end{cases}
\end{equation}
By maximum principle $0 \leq \xi \leq \phi \leq 1$ in $U$. We compare $v$ in $U \times [\bar t,t_4]$ with the barrier
\[
\omega(x,t):= c_5 \lambda \xi(x) + c_5 \lambda \frac{t-t_4}{t_4 - \bar t},
\]
where $c_5$ is a small universal constant. On $\{ x_n=0\}$, imposing $c_5 \leq c_3/16$, we have $\omega(x,t) \leq c_5 \lambda \phi(x) \leq v(x,t)$ by \eqref{eq:bound_v_interface}.
On $\left(\p U \setminus \{x_n=0\}\right) \times [\bar t,t_4]$ , the comparison follows since $\omega \leq 0 \leq v$. At the initial time $t=\bar t$, using that $\xi \leq 1$ in $U$, we get $\omega(x,\bar t) = c_5 \lambda (\xi(x)-1) \leq 0 \leq v(x,\bar t)$.
It remains to check that $\omega$ is a subsolution, which is readily verified since 
$$\lambda \p_t \omega = 16 c_5 \lambda \leq c_5 \lambda \mathcal M_K^- (D^2 \xi) = \mathcal M_K^- (D^2 \omega).
$$
Thus $v \geq \omega$, yielding at $t=t_4$
\[
v(x,t_4) \geq  c_5 \lambda \xi(x) \quad \text{ in } U.
\]

From the continuity of $\xi$ there exists a small universal constant $c_6 < c_4$, such that 
$$
\xi \geq \phi/2 \text{ in } U_1:=\left\{ x \in U \, : \, \operatorname{dist}\!\left(x, \p U \setminus \{ x_n=0\}\right) > c_4-c_6 \right\} \cap Q_{1/2}^-.
$$
Defining 
\[
D^-:= \{ x \in \Omega^- : \operatorname{dist}(x,\p \Omega^-) > c_6/4\},
\]
we apply the Weak Harnack inequality of Lemma \ref{Lm:WeakHarnackIneq} to the rescaling $\tilde v(x,t)=v(x,t_3+\lambda t)$. Since $\tilde v \in \overline{S}(0)$ in $D^- \times (0,5/8]$, in terms of $v$ we obtain
\begin{align} \label{eq:est_in_D^-}
\inf_{D^- \times (t_5,t_8)} v &\geq C_H^{-1}\left( \frac{1}{|D^-\times (\bar t, t_4)|}\int_{ (D^- \cap U_1)\times(\bar t, t_4)} v^{p_0}\right)^\frac{1}{p_0} \geq  c\frac{C_H^{-1}}{2} c_5 \lambda \inf_{Q_{1/2}^-}\phi \geq 2 c_7 \lambda,
\end{align}
where $c_7>0$ is a small universal constant independent of $c_0$. 

To propagate this improvement to the rest of $Q_1^-$ up to the final time $t_8 = t_0 + \lambda$, we repeat the explicit barrier argument of Step $1$ using the bound \eqref{eq:1step_2} as a Dirichlet datum on the interface $ \p \Omega^- \cap \{ x_n=0\}$.

Let $\psi_0$ be a smooth function in $\Omega^- \setminus D^-$ satisfying $\mathcal M_K^- (D^2 \psi_0) \geq 16$, with $\psi_0 = 0, \ |\nabla \psi_0| \geq 1 \; \text{ on } \p \Omega^-$ and $\psi_0 \leq 1$ on $\p D^-$. 
In $(\Omega^- \setminus D^-) \times [t_6,t_7]$, we compare $v$ with
\[
q_0(x,t):= c_7\lambda \left( \psi_0(x) + \frac{8(t-t_7)}{\lambda}\right).
\]
On $\p D^-$, we have $q_0 \leq c_7\lambda \leq 2 c_7\lambda \leq v$.  On the remaining parabolic boundary, $q_0 \leq 0 \leq v$. Since $\lambda \p_t q_0 = 8 c_7 \lambda \leq c_7 \lambda \mathcal{M}_K^-(D^2 \psi_0) \leq \mathcal{M}_K^-(D^2 q_0)$, the comparison principle yields $v \geq q_0$, which at $t=t_7$ gives
\begin{equation} \label{eq:compare_v_q-}
    v(x,t_7) \geq c_7\lambda\psi_0(x) \quad \text{in } \Omega^- \setminus D^-.
\end{equation}

Finally, we compare $v$ in $(\Omega^- \setminus D^-) \times [t_7,t_8]$ with
\[
z_0(x,t) := c_8(t-t_7) \phi(x) + c_7 \lambda \psi_0(x),
\]
for a universal constant $c_8 > 0$. At $t=t_7$, $z_0 \leq v$ by \eqref{eq:compare_v_q-}. On $\p \Omega^- \setminus \{ x_n=0\}$, $z_0 \leq 0 \leq v$. On $\p D^-$, choosing $c_8$ small depending on $c_7$, we get $z_0 \leq c_8 \frac{\lambda}{8} + c_7 \lambda \leq 2c_7\lambda \leq v$. On the interface $\{ x_n=0\}$, using \eqref{eq:bound_v_interface}, we require $c_8(t-t_7)\phi \leq c_3 \frac{\lambda}{16}\phi$, which holds by setting $c_8 \leq c_3/2$.
The interior equation requires $\lambda \p_t z_0 = \lambda c_8 \phi \le 16 c_7 \lambda \le \mathcal M_K^- (D^2 z_0)$, satisfied if $c_8$ is small depending on $c_7$. 
Thus $v \ge z_0$. Evaluating at $t_8 = t_0+\lambda$, we obtain $v(x,t_0+\lambda) \geq c_8\frac{\lambda}{8}\phi(x)$ in $\Omega^- \setminus D^-$. Substituting back $u$, we reach
\begin{equation} \label{eq:est_narrow_region}
u(x,t_0+\lambda) \geq \left( s(t_8) + c_8\frac{\lambda}{8}\right)\phi(x) - \eps_0 \quad \text{ in }\Omega^- \setminus D^-.
\end{equation}
On the other hand, in $D^-$, the Weak Harnack estimate \eqref{eq:est_in_D^-}, using that $\phi(x) \leq 1$, gives  
\begin{equation} \label{eq:est_core_region}
u(x,t_0+\lambda) \geq \left( s(t_8) + 2c_7\lambda\right)\phi(x) - \eps_0 \quad \text{ in } D^-.
\end{equation}
Combining \eqref{eq:est_narrow_region} and \eqref{eq:est_core_region}, and using the ODE bound $s(t_8) \geq s_0 - C_0 s_0 \lambda$, the coefficient of $\phi(x)$ in the whole $Q_1^-$ is bounded below by 
\[
s_0 - C_0 s_0 \lambda + \min\left\{ \frac{c_8}{8}, 2c_7 \right\} \lambda.
\]
Because $s_0 \leq c_0$, we can choose $c_0$ universally small such that $c_0(C_0 + 1) \leq \min\left\{ \frac{c_8}{8}, 2c_7 \right\}$. This guarantees 
\[
u(x,t_0+\lambda) \geq (s_0 + c_0 \lambda)\phi(x) - \eps_0 \quad \text{ in } Q_1^-,
\]
which, combined with the estimate in $Q_1^+$ obtained in Step 1, yields the desired bound \eqref{eq:improv2} in the whole $Q_1$.

     \end{proof}

Iterating Lemma \ref{Lm:Barrier_improvement} we can now provide the proof of Lemma \ref{Lm:Harnack1}.

\begin{customproof}{Lemma \ref{Lm:Harnack1}}
    
    Define the times $t_k:=-1+k\lambda$, for $k=0, \dots, N-1$ where \(N\) be the largest even integer satisfying \(N\lambda\le1/2\) and thus $t_k\in[-1,-1/2)$. We can assume, without loss of generality, that for at least half of the values of $t_k$ we have 
    \begin{equation} \label{eq:meas_est_harn_tk}
    \left| \left\{ u \geq \frac{1}{4} \right\} \cap \left( Q_1^+ \times \left[t_k, t_k+\frac{\lambda}{8} \right]\right) \right| \geq \frac{1}{2} \left|Q_1^+ \times \left[t_k, t_k+\frac{\lambda}{8} \right] \right|.
    \end{equation}
    Indeed, if this does not hold, then we can consider $1-u$ which, by assumption, is still nonnegative and belongs to the same class. Then arguing in the same way for $1-u$ one gets the desired result. With \eqref{eq:meas_est_harn_tk} in force, we want to show that we can apply iteratively Lemma \ref{Lm:Barrier_improvement} to separate $u$ from zero in $\C_{1/2}$. 
    
    Define the shifted sequence of functions
    \[
        u_k(x,t) := u(x,t) + k\eps_0.
    \]
    Clearly $u_k \in \overline{S}_{\lambda,T}(f^\pm)$ for every $k$ and since $u_k \geq u$, the measure condition \eqref{eq:meas_est_harn_tk} trivially holds for $u_k$ whenever it holds for $u$.

    We want to use Lemma \ref{Lm:Barrier_improvement} at each time $t_k$, with $s_0=0$ and $s_k$ acting as the lower bound coefficient for $u_k$. We begin by explaining the argument. Suppose we have proved $u_k(x,t_k) \geq s_k\phi(x)$ in $Q_1$. If \eqref{eq:meas_est_harn_tk} holds (good case) and $s_k \leq c_0$, then we use the second part of Lemma \ref{Lm:Barrier_improvement} applied to $u_k$ to obtain the improvement
    \[
    u_k(x,t_{k+1}) \geq (s_k+c_0\lambda)\phi(x) - \eps_0 \quad \textit{in }Q_1.
    \]
    Substituting $u_k = u + k\eps_0$, this rewrites as
    \[
    u(x,t_{k+1}) + (k+1)\eps_0 \geq (s_k+c_0\lambda)\phi(x)  ,
    \]
    that is $u_{k+1}(x, t_{k+1}) \geq s_{k+1}\phi(x)$  with $s_{k+1} = s_k + c_0\lambda$.
    On the other hand, if \eqref{eq:meas_est_harn_tk} does not hold (bad case), then, from the first part of Lemma \ref{Lm:Barrier_improvement}, we have the worse bound
    \[
    u_k(x,t_{k+1}) \geq s_k(1-C_0\lambda)\phi(x) - \eps_0. 
    \]
    Similarly, this yields $u_{k+1}(x, t_{k+1}) \geq s_{k+1}\phi(x)$ with $s_{k+1} = s_k(1-C_0\lambda)$.

    Since \eqref{eq:meas_est_harn_tk} holds more than half of the time, the final control we get on $s_k$ will depend on the order of the $t_k$'s where \eqref{eq:meas_est_harn_tk} is satisfied. Clearly, the worst case scenario is when it holds for the first half of times $t_k$. We only present the estimate in this case.

    We start with $s_0=0$. Then, we apply the good case $k_1$ times, where $k_1=N/2$, to get
    \[
    s_{k_1}= k_1c_0\lambda \ge\frac{c_0}{8}.
    \]
    In the remaining $k_1$ times we are in the bad case, where we use the first part of Lemma \ref{Lm:Barrier_improvement}. We then have the final estimate on the control
    \begin{align*}
        s_{2k_1}\geq s_{k_1}(1-C_0\lambda)^{k_1}\geq c_1,
    \end{align*}
    provided that $\lambda< 1/(2C_0)$.  
    Thus, iterating this procedure $2k_1$ times, we reach
    $$
    u_{2k_1}(x,t_{2k_1}) \geq c_1 \phi(x) \quad  x \in Q_1.
    $$
    Using one more time the first part of Lemma \ref{Lm:Barrier_improvement}, since $t_{2k_1}\leq -\frac{1}{2}$ and $\phi\geq \bar c$ in $Q_{1/2}$, for $ (x,t) \in Q_{1/2}\times [-1/2,0]$ we obtain
    \begin{align*}
            u_{2k_1}(x,t) &\geq s(t) \bar c -\frac{\eps_0}{\lambda}(t-t_{2k_1}) \geq c_1 \bar c e^{-C_0} -  \frac{\eps_0}{\lambda},
    \end{align*}
    where we used that since $t-t_{2k_1} \leq 1$ we know that $s(t)=c_1 e^{-C_0(t-t_{2k_1})} \geq c_1 e^{-C_0}$.
    
Recalling the definition of $ u_{2k_1}$ and noting that $2k_1 \leq \frac{1}{2\lambda}$, we deduce
\[
u(x,t) \geq c_1\bar c  e^{-C_0} - \frac{\eps_0}{\lambda} - 2k_1 \eps_{0}\geq c \quad  (x,t) \in Q_{1/2}\times [-1/2,0] ,
\]
by choosing  $\eps_0\leq \frac{c_1 \bar c e^{-C_0} }{4}\lambda$. This gives the desired estimate.
\end{customproof}

\section{H\"older continuity}\label{sec:holder}

Iterating Theorem \ref{Lm:Harnack}, we obtain H\"older estimates in terms of the distance $d_\lambda$, which we introduce next.
Define $d$ to be a distance which behaves as the parabolic distance in the interior and as the hyperbolic distance near the interface. Namely let 
\[ 
d((x,t),(y,s)):= \min \{|x'-y'|+|x_n-y_n|+|t-s|^{1/2}, |x'-y'|+|x_n|+|y_n|+|t-s| \}.
\]
 The metric balls associated with \(d\) are comparable, up to universal constants, with the following cylinders with centers $(y,s) \in \C_1$ and radius $r$ defined as
\begin{equation}\label{eq:def_ballBr}
    \mathcal{B}_r (y,s):= \begin{cases} Q_r(y) \times (s-r^2,s) \quad &\text{ if }r <|y_n|, \\
   Q_r(y) \times (s-r,s) \quad &\text{ if }|y_n|\leq r<2.
    \end{cases}
\end{equation}

For $\lambda \leq 1$, we define
\begin{align*}
d_\lambda((x,t),(y,s))&:= \lambda^{-1} d(\lambda(x,t), \lambda(y,s)) \\ &=\min \{|x'-y'|+|x_n-y_n|+\lambda^{-1/2}|t-s|^{1/2}, |x'-y'|+|x_n|+|y_n|+|t-s| \}.
\end{align*}
As above, we associate to the distance $d_\lambda$ the cylinders
\begin{equation} \label{eq:def_ballBrl}
    \mathcal{B}_{\lambda,r} (y,s):= \begin{cases} Q_r(y) \times (s-\lambda r^2,s) \quad &\text{ if }r <|y_n|, \\
     Q_r(y) \times (s-r,s) \quad &\text{ if }|y_n| \leq r  < 2 \lambda^{-1} .
    \end{cases}
\end{equation}
Notice that for $y_n=0$ we have $\mathcal{B}_{\lambda,r} (y,s) = \mathcal{B}_{r} (y,s)$. 
We say that $v$ is H\"older continuous with respect to $d_\lambda$ in $U$, and we write $v \in C_{d_\lambda}^{{ \beta}}(U)$, if there exists $M$ such that 
\[
\osc_{\mathcal{B}_{\lambda,r} (x,t)\cap U } v\leq M r^{{ \beta}}
\]
for any $(x,t) \in U$. Furthermore we define
\[
\|v \|_{C_{d_\lambda}^{{ \beta}}(U)} := \| v\|_{L^{\infty}(U)} + \sup_{(x,t) \neq (y,s) \in U} \frac{|v(x,t)- v(y,s)|}{d_\lambda((x,t),(y,s))^{{ \beta}}}
\]

\begin{Proposition} \label{Prop:holder} 
Let $u\in S^*_{\lambda,T}(f^{\pm})$ in $\C_1$. Then there exists a universal constant $\beta \in (0,1)$ such that $u \in C_{d_\lambda}^{\beta}(\C_{1/2})$ and
\[
\| u\|_{C_{d_\lambda}^{\beta}(\C_{1/2})} \leq C \left( \|u\|_{L^{\infty}(\C_1)} + \lambda^{-1} \|f^\pm\|_{L^{\infty}(\C_1)} \right),
\]
where $C > 0$ is a universal constant.
\end{Proposition}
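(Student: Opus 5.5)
We obtain Proposition \ref{Prop:holder} by iterating Theorem \ref{Lm:Harnack} at scales adapted to $d_\lambda$, combining the interface (hyperbolic) oscillation decay with interior parabolic regularity in the standard way. We normalize so that $\|u\|_{L^\infty(\C_1)}+\lambda^{-1}\|f^\pm\|_{L^\infty}\le 1$. It then suffices to prove, for every $(y,s)\in\C_{1/2}$ and every $r\le 1/4$, that
\[
\osc_{\mathcal{B}_{\lambda,r}(y,s)\cap \C_{1/2}}u\le Cr^\beta .
\]
Comparing with $d_\lambda$ then gives the seminorm bound, since the cylinders $\mathcal{B}_{\lambda,r}$ are comparable to metric balls of $d_\lambda$.

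\emph{Step 1: interface points.} Let $y_n=0$. The first task is to check the scaling invariance of the class. If $u\in S^*_{\lambda,T}(f^\pm)$ in $\C_r(y,s)$, then $u_r(x,t):=u(y+rx,s+rt)$ satisfies
\[
\lambda r^{-1}\p_t u_r-\mathcal M^\pm(D^2u_r)\lessgtr r^2 f^\pm ,
\]
so the parameter becomes $\lambda_r=\lambda/r$ and the source becomes $r^2f^\pm$. The interface inequality, being homogeneous of degree one in $(\p_t,\nabla)$, is unchanged. Theorem \ref{Lm:Harnack} is stated for arbitrary $\lambda$; its proof only used $\lambda<1/(2C_0)$ and a small $\eps_0\le c_1\lambda$. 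For $\lambda_r$ large, the time scale $\lambda_r$ exceeds the cylinder length, so the argument should be replaced by a single application of the weak Harnack inequality near the interface. I expect this regime split to be the main obstacle. The first thing I would try is to restrict the hyperbolic iteration to scales $r\ge\lambda$, where $\lambda_r\le1$. Theorem \ref{Lm:Harnack} then applies with error $C\lambda_r^{-1}r^2\|f\|=Cr^3\lambda^{-1}\|f\|\le Cr^3$. Iterating over dyadic $r_k=2^{-k}$ gives
\[
\osc_{\C_{r_k}(y,s)}u\le(1-c)^k+C\sum_j (1-c)^{k-j}r_j^3\le Cr_k^\beta .
\]
For $r<\lambda$, the cylinder $\C_r(y,s)$ is contained in the parabolic cylinder $\P_{\sqrt{\lambda r}}$-type region, and I would instead rescale parabolically. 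Setting $v(x,t)=u(y+\rho x,s+\lambda\rho^2t)$ with $\rho=r$ gives a problem with unit diffusion parameter, but the interface condition becomes $\p_tv\lesssim\lambda\rho\,(\ldots)$, which is small. This is a Neumann-type transmission problem in the parabolic scaling, for which the same barrier proof applies, since small time coefficients only help. This requires a short remark that Lemma \ref{Lm:Barrier_improvement} holds with the interface coefficient replaced by any factor $\le1$. Alternatively one may simply note that $\mathcal{B}_{\lambda,r}$ for $r<\lambda$ with $y_n=0$ is $Q_r\times(s-r,s)$, so it suffices to apply the hyperbolic estimate at scale $\lambda$ and then interior parabolic Hölder continuity (Krylov–Safonov) in $Q_r\times(s-r,s)$ after the time rescaling $t\mapsto t/\lambda$. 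I would settle for whichever of these two routes closes first.

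\emph{Step 2: points away from the interface.} Let $|y_n|=\delta>0$. For $r\ge\delta$, $\mathcal{B}_{\lambda,r}(y,s)\subset\mathcal{B}_{\lambda,2r}((y',0),s)$, and Step 1 gives the bound $C r^\beta$. For $r<\delta$, we rescale $w(x,t)=u(y+\delta x,s+\lambda\delta^2t)$. This function lies in $S^*(\delta^2\|f\|)$ for the standard heat-type Pucci class in $Q_1\times(-1,0]$, with oscillation at most $C\delta^\beta$ by Step 1 applied at scale $2\delta$; here $Q_\delta(y)\times(s-\lambda\delta^2,s)\subset\mathcal{B}_{\lambda,2\delta}$ since $\lambda\delta^2\le 2\delta$. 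Interior Krylov–Safonov estimates (via Lemma \ref{Lm:WeakHarnackIneq}) give
\[
\osc_{Q_{\rho}\times(-\rho^2,0]}w\le C\rho^{\beta}\bigl(\delta^\beta+\delta^2\|f\|\bigr).
\]
Scaling back, with $\rho=r/\delta$, yields $Cr^\beta$ after possibly reducing $\beta$. The constants are universal because $\lambda\le\lambda_0$ and the source contribution is absorbed through $\lambda^{-1}\|f^\pm\|_{L^\infty}$.
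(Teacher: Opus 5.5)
Your architecture matches the paper's. You iterate Theorem \ref{Lm:Harnack} under hyperbolic rescaling at interface points, use $\mathcal{B}_{\lambda,r}(y,s)\subset\mathcal{B}_{\lambda,2r}((y',0),s)$ when $|y_n|\le r$, and use interior parabolic estimates when $r<|y_n|$. Your Step 2 is fine. Step 1, however, rests on a miscomputed scaling.

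For $u_r(x,t)=u(y+rx,s+rt)$ you have $\p_t u=r^{-1}\p_t u_r$ and $D^2u=r^{-2}D^2u_r$. Multiplying the interior inequalities by $r^2$ gives $\lambda r\,\p_t u_r-\mathcal{M}_K^{\pm}(D^2u_r)\lessgtr r^2 f^\pm$. So the parameter becomes $\lambda r$, not $\lambda/r$, i.e.\ $u_r\in S^*_{\lambda r,T}(r^2f^\pm)$; the interface inequality is unchanged, as you say. Consequently:
\begin{itemize}
\item $\lambda r\le\lambda\le\lambda_0$ at every scale $r\le1$, so Theorem \ref{Lm:Harnack} applies directly.
\item The error term is $C(\lambda r)^{-1}r^2\|f^\pm\|_{L^\infty}=Cr\lambda^{-1}\|f^\pm\|_{L^\infty}$, not $Cr^3\lambda^{-1}\|f^\pm\|_{L^\infty}$.
\item The dyadic iteration gives $\osc_{\C_r(y,s)}u\le Cr^\beta\bigl(\|u\|_{L^\infty(\C_1)}+\lambda^{-1}\|f^\pm\|_{L^\infty(\C_1)}\bigr)$ for all $r\le 1/2$, with no case distinction.
\end{itemize}
That is exactly the paper's argument. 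The regime split at $r<\lambda$ that you single out as the main obstacle is an artifact of the inverted exponent.

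As written, though, the proof has a genuine gap. Neither workaround you offer for $r<\lambda$ is valid, and your Step 2 for $r\ge\delta$ relies on Step 1 at all small scales.

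Route (b) applies interior Krylov--Safonov estimates on $Q_r(y)\times(s-r,s)$ with $y_n=0$. That cylinder straddles the interface, where only the transmission inequality is available, so interior estimates do not apply.

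Route (a) fails for two reasons.
\begin{itemize}
\item With $\rho=r$ it only controls $Q_r(y)\times(s-\lambda r^2,s)$. At interface points, $\mathcal{B}_{\lambda,r}(y,s)=Q_r(y)\times(s-r,s)$ is the much longer hyperbolic cylinder.
\item A small interface coefficient does not ``only help''. As $\lambda\rho\to0$, the condition $\p_t v=\lambda\rho(\cdots)$ degenerates to $\p_t v=0$. For that limit problem the trace on $\{x_n=0\}$ is just its initial value, so no uniform oscillation decay on parabolic time scales can hold.
\end{itemize}
You can see the same effect in the barrier proof of Lemma \ref{Lm:Barrier_improvement}. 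If the right-hand side of the interface check for $z$ is multiplied by $\lambda\rho$, the admissible growth rate $c_3$ of the coefficient of $\phi$ is forced to be $\lesssim\lambda\rho$, so the gain over unit parabolic time is not universal. Oscillation decay at the interface genuinely needs hyperbolic time intervals, which is why $d_\lambda$ is hyperbolic there at every scale.
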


\begin{proof}
    For $r \leq 1$, define the hyperbolic rescaling of $u$ 
    \[
    u_r(x,t):=u(rx,rt).
    \]
    Notice that $u_r \in S^*_{\lambda r,T}(r^2 f^\pm)$. Thus, $u_r$ satisfies the assumptions of Theorem \ref{Lm:Harnack} with $\lambda$ replaced by $\lambda r$ and $f^\pm$ replaced by $r^2 f^\pm$. Applying Theorem \ref{Lm:Harnack} to $u_r$, we obtain
    \[
    \osc_{\C_{1/2}} u_r \leq (1-c) \osc_{\C_1} u_r + C r \lambda^{-1} \| f^\pm\|_{L^\infty(\C_1)}.
    \]
Let $(y,s) \in \C_{1/2}$ and distinguish three cases based on its distance to the interface.
If \(y_n=0\), then $\mathcal{B}_{\lambda,r} (y,s) = \mathcal{B}_{r} (y,s)$ and so we obtain
    \begin{equation*}
         \osc_{\mathcal{B}_{\frac{r}{2}}(y,s)} u \leq (1-c)  \osc_{\mathcal{B}_{r}(y,s)} u + C r \lambda^{-1} \|f^\pm\|_{L^\infty(\C_1)}.
    \end{equation*}
    By a standard argument, this recurrence relation implies that for any $r < \frac{1}{2}$, we have
    \begin{equation} \label{eq:holder_interface}
         \osc_{\mathcal{B}_{r}(y,s)} u \leq C r^\beta \left( \|u\|_{L^\infty(\C_1)} + \lambda^{-1} \|f^\pm\|_{L^\infty(\C_1)} \right),
    \end{equation}

If \(0<|y_n|\le r\), then $\mathcal{B}_{\lambda,\frac{r}{8}}(y,s) \subset \mathcal{B}_{\lambda,2r}(y',0,s)$
 and \eqref{eq:holder_interface} gives
    \begin{align*}
        \osc_{\mathcal{B}_{\lambda,\frac{r}{8}}(y,s)} u &\leq \osc_{\mathcal{B}_{\lambda,2r}(y',0,s)} u  \\
        &\leq C r^\beta \left( \|u\|_{L^\infty(\C_1)} + \lambda^{-1} \|f^\pm\|_{L^\infty(\C_1)} \right),
    \end{align*}

If \(r<|y_n|\), then \(\mathcal B_{\lambda,r}(y,s)\) is contained in one
phase. Therefore we can apply standard interior H\"older estimate to
\[
v(x,t)=u(y+rx,s+\lambda r^2t),
\]
obtaining the same oscillation bound.
    
\end{proof}

\section{Comparison principle and uniqueness} \label{sec:compare}

The main result of the section is the following comparison principle for solutions to \eqref{eq:main1}.

\begin{Theorem}[Comparison principle] \label{Thm:CP}
Let $v_2$ be a viscosity supersolution to \eqref{eq:main1} and $v_1$ a viscosity subsolution to \eqref{eq:main1} such that $v_1 \leq v_2$ on $\p _p\C_1$
    then $ v_1 \leq v_2 $ in $\C_1$. 
\end{Theorem}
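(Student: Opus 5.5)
We argue by contradiction, combining a strict perturbation with the doubling-of-variables method. The interface is treated separately: there the transmission condition involves one-sided normal derivatives, so the usual Ishii-lemma argument is unavailable.

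\emph{Step 1: strict subsolution.} Assume $\sup_{\C_1}(v_1-v_2)=\mu>0$. Replace $v_1$ by
\[
\tilde v_1 = v_1 - \delta\, h(x_n) - \delta (t+1),
\]
with $\delta>0$ small and $h$ even, convex and Lipschitz, for instance $h(x_n)=-|x_n| + M x_n^2$, suitably bounded. Then on $\C_1'$ we have $\p_n h^+ = -1$ and $\p_n h^- = +1$. Hence $-\delta h$ adds $+\delta$ to $\gamma^+\p_n$ and to $-\gamma^-\p_n$, i.e. a gain of $\delta(\gamma^++\gamma^-)>0$, while the time term subtracts a further $\delta$. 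In the interior, $-\delta h''=-2M\delta$ and the term $-\lambda\delta$ both enter with the favorable sign. So $\tilde v_1$ is a strict viscosity subsolution with a margin $\eta=\eta(\delta)>0$, both in $\C_1^\pm$ and on $\C_1'$, and $\sup(\tilde v_1-v_2)>\mu/2>0$ is still attained in the interior or at the final time, since $\tilde v_1\le v_1\le v_2$ on $\p_p\C_1$. Time derivatives at $t=0$ are handled as usual, because $\C_1$ includes $t=0$ and one-sided touching suffices.

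\emph{Step 2: doubling variables.} Consider
\[
\Phi(x,y,t,s)=\tilde v_1(x,t)-v_2(y,s)-\frac{|x-y|^2}{2\epsilon}-\frac{|t-s|^2}{2\epsilon},
\]
with maximizers $(x_\epsilon,t_\epsilon,y_\epsilon,s_\epsilon)$ converging to a maximum point $(\bar x,\bar t)$ of $\tilde v_1-v_2$ away from $\p_p\C_1$.

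If $\bar x_n\neq 0$, then for small $\epsilon$ both points lie in the same phase. The parabolic Ishii--Lions lemma (Crandall--Ishii--Lions) applies because the coefficients $A^\pm(t)$ depend only on time and are Lipschitz. The discrepancy $\tr((A^\pm(t_\epsilon)-A^\pm(s_\epsilon))X)$ is controlled by $|t_\epsilon-s_\epsilon|\,|X|\lesssim |t_\epsilon-s_\epsilon|/\epsilon\to0$, since $|t-s|^2/\epsilon\to 0$. This contradicts the strict margin $\lambda\eta$.

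\emph{Step 3: the interface case $\bar x_n=0$.} This is the main obstacle. Our first approach is to avoid doubling in $x_n$ altogether. We double only in $(x',t)$ and keep the same $x_n$, taking the penalization $|x'-y'|^2/2\epsilon+|t-s|^2/2\epsilon$ on the diagonal $x_n=y_n$. An alternative that exploits the structure, since the coefficients are independent of $x$, is to sup-convolve $v_1$ (and inf-convolve $v_2$) in the tangential variables $x'$ and in $t$ only. Tangential translations preserve \eqref{eq:main1}, and time-shifts change coefficients only by $O(\text{shift}/\lambda)$, which the margin $\eta$ absorbs.

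After regularization, $\tilde v_1$ and $v_2$ are semiconvex/semiconcave in $(x',t)$ at the touching point. We then construct a test function: piecewise in $x_n$, for $x_n>0$ and $x_n<0$ separately, we use the one-sided quadratic expansions. Using the strict interior inequality together with a one-dimensional barrier in $x_n$ near the interface (e.g.\ $\pm\kappa x_n \mp C x_n^2$), we show that at an interface maximum of $\tilde v_1-v_2$ the difference cannot have the normal slopes required. Concretely, touching $\tilde v_1-v_2$ from above by $m + p_+ x_n$ for $x_n>0$ and $m+p_- x_n$ for $x_n<0$ forces $p_+\le 0\le p_-$. Feeding these slopes into both transmission inequalities (for $\tilde v_1$ as subsolution and $v_2$ as supersolution, where the tangential and time derivatives of the common test function cancel) gives
\[
0<\eta\le \gamma^+p_+-\gamma^-p_-\le 0,
\]
a contradiction. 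Justifying the existence of such a common piecewise test function, i.e.\ producing one-sided normal derivatives via a Hopf-type/barrier argument in each phase, is where the real work lies. We would first try it using the interior strictness, which makes the maximum of $\tilde v_1-v_2$ on each half-cylinder non-flat in $x_n$.
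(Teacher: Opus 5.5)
\paragraph{Main gap.} Your proposal has a genuine gap, and it sits exactly where the theorem has content. The interface case of Step~3 is not proved (as you acknowledge), and the mechanism you sketch cannot work as stated.

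Viscosity inequalities are not available for test functions of the difference $\tilde v_1-v_2$. A piecewise linear function touching $\tilde v_1-v_2$ from above gives no test function touching $\tilde v_1$ from above or $v_2$ from below. So its slopes cannot be fed into the two transmission inequalities. Moreover, at a maximum any $p_+\ge 0\ge p_-$ touches trivially, so nothing is ``forced''.

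What you actually need is a pair $\varphi\ge\tilde v_1$, $\psi\le v_2$ touching at the contact point, with equal $\partial_t$ and $\nabla'$ and ordered one-sided normal slopes. That means time-differentiability of the traces and one-sided first-order expansions in $x_n$ at an interface point, which is a regularity statement. Semiconvexity in $(x',t)$ says nothing about $x_n$. A Hopf lemma for $v_2-\tilde v_1$ bounds the slope gap from below but produces no test functions: a priori $\tilde v_1$ may have infinite upward normal slope, so nothing touches it from above and its transmission inequality is void.

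The paper supplies exactly this regularity by a pointwise bootstrap at the contact point:
\begin{itemize}
\item It convolves only in $x'$ and adds $\lambda\delta(t-s_\varepsilon)+K\delta(x_n^2+|x_n|)$. At a strict first-crossing contact point this gives the linear-in-time gap $v_1-v_2\le 2\lambda\delta t$ on $\{x_n=0\}$.
\item It replaces each $v_i$, in each phase, by the solution of the linear equation with the same boundary data. This keeps the interface inequalities, since the replacement lies above $v_1$ (below $v_2$) and coincides with it on $\{x_n=0\}$. It then removes the common tangential gradient.
\item Lemma~\ref{lm:lipschitz_trace} is a heat-kernel barrier giving $w(x',0,t)\ge w(x',0,0)-C\lambda^{2/3}|t|^{1/2}$ for Lipschitz traces. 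Combined with $v_1\le v_2$ and rescalings in which $\lambda$ shrinks, it raises the pointwise order at the origin from $\alpha$ to $\alpha+\tfrac23$.
\item After subtracting the one-sided normal linear profiles (with a compatible $b_i(t)$), the order rises to $\alpha+\tfrac43>2$.
\end{itemize}
Hence $v_1(0,t)\ge -C|t|^{1+\beta}$ and $v_2(0,t)\le C|t|^{1+\beta}$, contradicting the linear gap. The contradiction comes from the time direction, not from comparing normal slopes.

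\paragraph{Auxiliary steps.} Several of these also need repair.

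In Step~1 the sign is wrong. The term $-\delta Mx_n^2$ in $-\delta h$ is concave, so it contributes $+2M\delta A^\pm_{nn}$ to $\lambda\partial_t\tilde v_1-\tr(A^\pm D^2\tilde v_1)$, which is unfavorable. Also $h$ is not convex, since $-|x_n|$ gives it a concave corner at $0$. Take $M<\lambda/(2K)$, e.g.\ $M=0$, or add $+\delta x_n^2$ as the paper does. In addition, $\tilde v_1\le v_1$ fails on $\partial_p\mathcal{C}_1$ near $\{x_n=0\}$; this is harmless for $\delta\ll\mu$.

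In Step~2, $|t_\epsilon-s_\epsilon|^2/\epsilon\to0$ only gives $|t_\epsilon-s_\epsilon|\,|X|=o(\epsilon^{-1/2})$, not $o(1)$. The conclusion can be rescued by the estimate $\tr(A(t)X)-\tr(A(s)Y)\le \frac{3}{\epsilon}|A^{1/2}(t)-A^{1/2}(s)|^2$, or by not doubling in time. The paper simply invokes single-phase comparison.

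Finally, sup-convolution in $t$ does not perturb the problem by a fixed $O(\text{shift}/\lambda)$. The coefficient error is multiplied by $|D^2\varphi|$ in the interior and by $|\nabla\varphi|$ on the interface, so a fixed margin $\eta$ cannot absorb it uniformly over test functions. The paper instead regularizes only in $x'$ and obtains the time regularity of the traces from Lemma~\ref{lm:lipschitz_trace}.
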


An immediate consequence of Theorem \ref{Thm:CP} is the uniqueness of solutions to \eqref{eq:main1} with  boundary data $g \in C(\p_p \C_1)$. 
\begin{Corollary}[Uniqueness]\label{cor:uniqueness}
    The transmission problem \eqref{eq:main1} with boundary datum $g \in  C(\p_p \C_1)$ admits at most one viscosity solution.
\end{Corollary}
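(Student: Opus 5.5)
Corollary \ref{cor:uniqueness} follows directly from the comparison principle of Theorem \ref{Thm:CP}, applied twice. Let $u_1$ and $u_2$ be two viscosity solutions of \eqref{eq:main1} with the same boundary datum $g \in C(\p_p \C_1)$. Being a viscosity solution means being simultaneously a viscosity subsolution and a viscosity supersolution in the sense of Definition \ref{def:visc}; this includes the transmission condition on $\C_1'$ tested from both sides. In particular, $u_1 \in USC(\C_1)\cap LSC(\C_1)$ is continuous, and likewise $u_2$. The boundary datum is attained continuously, so that $u_1 = u_2 = g$ on $\p_p\C_1$.

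First, regard $u_1$ as a viscosity subsolution and $u_2$ as a viscosity supersolution. Since $u_1 = g = u_2$ on $\p_p \C_1$, the hypothesis $u_1 \leq u_2$ on $\p_p\C_1$ of Theorem \ref{Thm:CP} holds. The theorem gives $u_1 \leq u_2$ in $\C_1$.

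Next, exchange the roles: $u_2$ is a viscosity subsolution, $u_1$ is a viscosity supersolution, and $u_2 \leq u_1$ on $\p_p \C_1$. Theorem \ref{Thm:CP} gives $u_2 \le u_1$ in $\C_1$. Combining the two inequalities, $u_1 \equiv u_2$ in $\C_1$, and by continuity up to the parabolic boundary also in $\overline{\C_1}$.

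All the analytic difficulty is therefore in Theorem \ref{Thm:CP}, which we take as given. The only point to check is that the notion of ``solution with boundary datum $g$'' in the corollary matches the hypotheses of Theorem \ref{Thm:CP}. Concretely, the boundary values on $\p_p \C_1$ must be taken in the sense required there, namely through the continuous extension to $\overline{\C_1}$, so that the inequality $v_1\le v_2$ on $\p_p\C_1$ is meaningful. With the standing convention $u \in C(\overline{\C_1})$ and $u = g$ on $\p_p\C_1$, as in Theorem \ref{Thm:C1gamma}, this is immediate.
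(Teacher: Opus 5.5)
Your proposal is correct and follows the paper's argument: the paper states uniqueness as an immediate consequence of Theorem \ref{Thm:CP}, which amounts to applying the comparison principle twice with the roles of the two solutions exchanged. Your remark that the boundary datum must be attained continuously, so that the inequality on $\p_p \C_1$ makes sense, is an appropriate precision of what the paper leaves implicit.
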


In this section we will always work with the rescaling $w(x,t)=v(x,\lambda t)$ where $v$ is a solution to \eqref{eq:main1}.
We start with a preliminary result on the evolution in time of a solution with Lipschitz trace on the interface. 

\begin{Lemma} \label{lm:lipschitz_trace}
    Assume that $w\leq 1$ satisfies 
    \begin{align}\label{eq:pucci_trans}
        \begin{cases}
            \p_t w \leq \Mp(D^2w)+1 &\mbox{ in } \C_1^\pm\\
              \frac{1}{\lambda} \p_t w \leq K |\nabla' w| + K\big( (\p_n w^+)_+ + (\p_n w^-)_- \big) - K^{-1}\big( (\p_n w^+)_- + (\p_n w^-)_+ \big)& \text{ on }\C_{1}',
        \end{cases}
    \end{align}
    and 
    \[
    |\nabla'w| \leq 1 \quad \text{on }\{x_n=0 \}.
    \]
    Then 
    \[
    w(x',0,t) \geq w(x',0,0) - C \lambda^{2/3} |t|^{1/2} \quad \text{for }x' \in Q_{1/2}', \, t \in (-1,0].
    \]
\end{Lemma}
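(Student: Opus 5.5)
The plan is to compare $w$ with an explicit classical supersolution of the Pucci transmission problem \eqref{eq:pucci_trans}, placed at the starting time, and to choose its parameters optimally. The required inequality says that the interface trace cannot increase by more than $C\lambda^{2/3}|t|^{1/2}$ between time $t$ and time $0$. So I fix $x_0'\in Q_{1/2}'$ and $t_1\in(-1,0)$, set $h:=|t_1|$ and $w_0:=w(x_0',0,t_1)$, and aim to build a function $\Phi$ on a small box around $(x_0',0)$ over the time interval $[t_1,0]$ with three properties: $\Phi\ge w$ on the parabolic boundary of the box, $\Phi$ is a strict supersolution of \eqref{eq:pucci_trans}, and $\Phi(x_0',0,0)\le w_0+C\lambda^{2/3}h^{1/2}$. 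The comparison principle for classical strict supersolutions against viscosity subsolutions then gives $w(x_0',0,0)\le\Phi(x_0',0,0)$. This is exactly the statement with the sign read correctly: $w(x_0',0,t_1)\ge w(x_0',0,0)-C\lambda^{2/3}h^{1/2}$. The argument is the same "touching from above" comparison used for the barriers of Lemma \ref{Lm:Barrier_improvement}.

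The barrier has three pieces. The \emph{tangential part} is $w_0+\delta+|x'-x_0'|^2/\delta$. Since $|\nabla' w|\le 1$ on $\{x_n=0\}$, this dominates $w(\cdot,0,t_1)$ on the interface. Its Hessian costs $O(1/\delta)$ in the interior equation, and its tangential gradient is $O(1)$ in the region $|x'-x_0'|\lesssim\delta$ that matters. The \emph{normal part} must lift $\Phi$ above $1\ge w$ at time $t_1$ away from the interface, where nothing else is known, and also on the lateral and top and bottom faces of the box. I will use $P(x_n,\tau)$ with $\tau=s-t_1$: an even profile in $x_n$ of width $\rho$, equal to $2$ at $|x_n|=\rho$, vanishing at $x_n=0$, and with a corner of slope $-k$ at the interface. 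The corner is there to exploit the interface inequality. With $\partial_n\Phi^+=-k$ and $\partial_n\Phi^-=+k$, the right-hand side of the second line of \eqref{eq:pucci_trans} is at most $K|\nabla'\Phi|-2K^{-1}k$. So once $k\gtrsim K^2$, the interface condition holds without any time growth of $\Phi$ on $\{x_n=0\}$. The \emph{time part} handles the interior inequality $\partial_t\Phi\ge\mathcal M_K^+(D^2\Phi)+1$ in $\{x_n\neq0\}$, whose source is of size $S\simeq K/\delta+K\rho^{-2}+1$. I will let $P$ grow in time like the solution of the one-dimensional problem $\partial_\tau P=K\partial_n^2P+S$. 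This growth is mostly felt away from $x_n=0$, but it raises $\partial_nP^+$ at the interface at rate about $S\sqrt\tau$. That derivative must be absorbed by the corner, and whatever the corner cannot absorb is paid for by letting the interface value grow at rate $\lambda K(\cdots)$. That interface growth is where the factor $\lambda$ enters.

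The \emph{main obstacle} is balancing these costs, and I expect it to determine the exponent $\lambda^{2/3}$. There are three losses. The first is the initial offset $\delta$. The second is the interface growth, of order $\lambda\cdot(\text{normal slope})\cdot h$ integrated over $[t_1,0]$, where the normal slope is of order $1/\rho+S\sqrt h$. The third is the requirement that the interior layer of width $\rho$ keep $\Phi\ge1$, which forces $\rho$ to be comparable to $\sqrt h$ times a power of $\lambda$. My first attempt at the choice of parameters is $\rho\sim\lambda^{1/3}h^{1/2}$ and $\delta\sim\lambda^{2/3}h^{1/2}$. The intended outcome is that all three contributions are then of order $\lambda^{2/3}h^{1/2}$, but I have not yet checked the balance. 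If a direct one-dimensional profile is too rigid, the fallback is to solve the 1D problem for $P$ with a Robin-type interface condition $\partial_\tau P=\lambda K\,\partial_nP^+$ exactly, and read off its boundary growth. In either case I must also check that the cut-offs in $x'$ and $x_n$ keep $\Phi\ge 1$ on the remaining parabolic boundary, using $w\le1$ there. Finally, the range $h\gtrsim\lambda^{-4/3}$ needs no separate treatment: since $t_1\in(-1,0]$ and $\lambda\le\lambda_0$, it is trivial, because the claimed bound exceeds $\operatorname{osc} w$ there.
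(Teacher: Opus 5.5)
\textbf{There is a genuine gap, and it is in the normal part of your barrier.}

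At the starting time $t_1$ the only information you have off the interface is $w\le 1$. So on the bottom of your box you need $\Phi(x,t_1)\ge 1$ for \emph{every} $x$ with $x_n\neq 0$. Meanwhile $\Phi(x_0',0,t_1)\approx w_0+\delta$, which can be far below $1$. Your profile $P(\cdot,0)$ is continuous, vanishes at $x_n=0$, and even decreases (slope $-k$) as one leaves the interface. Hence $\Phi(\cdot,t_1)<1$ in a neighbourhood of $\{x_n=0\}$ whenever $w_0+\delta<1$, and the comparison at $t=t_1$ fails. Shrinking $\rho$ does not help, so the initial profile must \emph{jump} across $x_n=0$.

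Once it does, the concave corner is unavailable. For $\tau=t-t_1>0$, the profile must rise from roughly the trace value on $\{x_n=0\}$ to roughly $1$ within a distance of order $\sqrt\tau$. The normal slopes at the interface are therefore positive and of size $\tau^{-1/2}$: $\p_n\Phi^+>0$ and $\p_n\Phi^-<0$, a valley rather than a peak. In the supersolution version of the transmission inequality these slopes enter with the coefficient $K$. They cannot be absorbed, and must be paid by letting the trace grow at rate $\lambda\tau^{-1/2}$, i.e.\ by a total of $\lambda\int_0^h\tau^{-1/2}\,d\tau\sim\lambda h^{1/2}$. This is exactly the source of the $\lambda|t|^{1/2}$ term in the estimate. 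The mechanism you rely on to make the interface condition free ($k\gtrsim K^2$) therefore cannot be used. Your fallback of solving a 1D problem with the dynamic condition would only work if started from such discontinuous data.

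The paper's proof is built around this point, in two steps.

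First, it shows that if $w(x',0,t_0)\le|x'|^2$, then $w(0,t)\le C\lambda\big((t-t_0)^{1/2}+(t-t_0)\big)$, via the supersolution
\[
F=g(|x_n|,t-t_0)+M\lambda\big((t-t_0)^{1/2}+(t-t_0)\big)+|x'|^2+C(2|x_n|-x_n^2).
\]
Here $g$ solves $\p_s g=K^{-1}\p_{nn}^2 g$ with initial datum $\chi_{(0,\infty)}-\chi_{(-\infty,0)}$.
\begin{itemize}
\item The $g$-term supplies the jump. Its slope $\p_n g(0,s)\le\sqrt{K/\pi}\,s^{-1/2}$ is paid by $M\lambda(t-t_0)^{1/2}$.
\item The tangential Hessian of $|x'|^2$ is compensated by the concave term $C(2|x_n|-x_n^2)$ rather than by time growth. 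This term also gives $F\ge 1$ on $\{|x_n|=1\}$.
\item Its positive interface slope $2C$, together with the bounded tangential gradient $2|x'|$, is paid by $M\lambda(t-t_0)$.
\end{itemize}

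Second, it applies this to $\tilde w(y,s)=w(ry,r^2s)-w(0,t)-Cr^2$. The Lipschitz trace gives the quadratic bound, and $\lambda r$ replaces $\lambda$. This yields $w(0,0)-w(0,t)\le C\big(r^2+\lambda|t|^{1/2}+\lambda r^{-1}|t|\big)$, and one chooses $r=(\lambda|t|)^{1/3}$.

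Your offset $\delta$ plays the role of $Cr^2$, so the tangential part of your plan is sound. What must be replaced is the normal profile and the interface cost accounting. In the correct version these produce the losses $\lambda h^{1/2}$ and $\lambda h/\sqrt\delta$, balanced against $\delta$ at $\delta\sim(\lambda h)^{2/3}$.
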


\begin{proof}
    We start by showing that if 
    \begin{equation}\label{eq:claim_initial_assump}
         w((x',0),t_0)\leq |x'|^2 \quad \text{for } t_0 <0,
    \end{equation}
    then
    \begin{equation} \label{eq:claim_tracetime}
         w(0,t)\leq C\lambda \big((t-t_0)^\frac{1}{2}+(t-t_0)\big), \quad t \in [t_0, 0]
    \end{equation}
    for some universal constant $C>0$ depending only on $n$ and $K$. For this we define
    \[
    F(x,t)=g(|x_n|,t-t_0)+M\lambda\big((t-t_0)^\frac{1}{2}+(t-t_0)\big)+|x'|^2+C(2|x_n|-x_n^2)
    \]
    where $M>0$ and $C>0$ will be chosen universally large and $g(x_n,s)$ solves 
    \begin{equation} \label{eq:construction_g}
    \begin{aligned}
        \begin{cases}
           \p_s  g=K^{-1}\p_{nn}^2 g &\mbox{ in } \R\times(0,\infty)\\
            g(x_n,0)=\chi_{(0,\infty)}-\chi_{(-\infty,0)}&\mbox{ on } \R.
        \end{cases}
    \end{aligned}
    \end{equation}
    We start by checking that $F$ is a classical supersolution to our problem in $Q_1 \times (t_0,0]$. Indeed, for $x_n\neq 0$, $t>t_0$, we have $\p_{nn}^2 g<0$ and thus
    \[
    \Mp(D^2F)=2K(n-1)+K^{-1}(\p_{nn}^2 g-2C)
    \]
    and
    \[
    \p_t F=\p_s g+M\lambda\left(\frac{1}{2}(t-t_0)^{-1/2}+1\right)
    \]
    from where we get
    \[
    \p_tF-\Mp(D^2F)=(\p_sg-K^{-1}\p^2_{nn}g)+M\lambda\left(\frac{1}{2}(t-t_0)^{-1/2}+1\right)-2K(n-1)+2C K^{-1}>1,
    \]
    provided $2CK^{-1} - 2K(n-1) > 1$.

    Now we check that the interface condition is satisfied. Note that by expressing $g$ using the fundamental solution and differentiating with respect to $x_n$ we get $\p_n g(0,t-t_0)\leq \sqrt{K/\pi} (t-t_0)^{-1/2}$. Hence we obtain
    \begin{align*}
        &\frac{1}{\lambda}\p_t F-K\big( (\p_n F^+)_+ + (\p_n F^-)_-\big)+K^{-1}\big( (\p_nF^+)_- + (\p_n F^-)_+\big)-K|\nabla' F|\\
        \geq\,&M\left(\frac{1}{2}(t-t_0)^{-1/2}+1\right)- 2K\left(\sqrt{\frac{K}{\pi}} (t-t_0)^{-1/2} + 2C\right) - 2K > 0
\end{align*}
provided we choose $M$ universally large such that $M > 4K\sqrt{K/\pi}$ and $M > 4KC + 2K$.

    To conclude, it is a simple computation to check that $F\geq w$ on the parabolic boundary for $t \in [t_0, 0]$. Hence, since $F$ is a classical supersolution, the claim \eqref{eq:claim_tracetime} follows.

    Now, notice that using the Lipschitz estimate on $\{x_n=0 \}$ at the time $t$, and for $0<r<1$ to be fixed later, we obtain
    \begin{equation} \label{eq:bound_w1}
        w(x',0,t) - w(0,t) \leq Cr^2 + \frac{|x'|^2}{r^2},
    \end{equation}
    by choosing $C$ large enough.
    This follows since the function $f(\rho)=\rho - Cr^2-\frac{\rho^2}{r^2}$ is nonpositive because it is concave in $(0,r)$, $f(0) < 0$, $f(r) <0$, and at its maximum point $\rho=\frac{1}{2}r^2$ we have $f\left(\frac{1}{2}r^2\right)=\left(\frac{1}{4}-C\right)r^2 \leq 0.$
    
    In particular, \eqref{eq:bound_w1} implies that the rescaling 
    \[
    \tilde w(y,s):=w(ry,r^2 s)-w(0,t)-Cr^2
    \]
    solves \eqref{eq:pucci_trans} with $\lambda r$ in place of $
\lambda$ and satisfies the initial growth assumption \eqref{eq:claim_initial_assump} at $s_0=t/r^2$  to apply the claim \eqref{eq:claim_tracetime}. This leads to
    $$
    \tilde w(0,s) \leq C\lambda r \big((s - s_0)^{1/2} + (s - s_0)\big), \quad \text{for } s \in [s_0, 0].
    $$
    Evaluating  this bound at $s=0$ and recalling that $s_0=t/r^2$, we obtain
    \[
  w(0,0)-w(0,t) - Cr^2 \leq C\lambda r \left( r^{-1}|t|^{1/2} + r^{-2}|t| \right),
    \]
    which simplifies to
    \[
   w(0,0)-w(0,t) \leq C(r^2+\lambda |t|^{1/2}+\lambda r^{-1} |t|).
    \]
    Choosing $r=\left( \lambda |t|\right)^{1/3}$ we obtain
    \[
    w(0,t) \geq w(0,0) - C\left( \lambda|t|^{1/2} + \left( \lambda |t|\right)^{2/3} \right) \geq w(0,0) - C \lambda^{2/3} |t|^{1/2}.
    \]
    Since $t \in (-1,0]$ was arbitrary, this gives the desired result for $x'=0$. The general case follows similarly.
\end{proof}

\begin{Remark}
    Based on the proof of Lemma \ref{lm:lipschitz_trace}, it is possible to construct a supersolution $F(x, t)$ in the $\mathcal{C}_1$ with $F(x',0, -1) = |x'|$ and $F \ge 1$ on the rest of $\partial_D \mathcal{C}_1$, such that $F(0, t) \le C \lambda^{\frac{2}{3}} |t|^{1/2}$. Furthermore, a similar argument for any ${ \alpha} > 0$ yields a supersolution satisfying $F(x', 0, -1) = |x'|^{{ \alpha}}$ and $F \ge 1$ elsewhere on $\partial_D \mathcal{C}_1$. This results in an upper bound of $F(0, t) \le C(\lambda |t|)^{\beta}$ for some parameter $\beta$ dependent on the choice of ${ \alpha}$.
\end{Remark}

We are now in position to prove Theorem \ref{Thm:CP}.
\begin{customproof}{Theorem \ref{Thm:CP}}
We start by showing that if $v_1$ and $v_2$ are respectively a subsolution and a supersolution to \eqref{eq:main1}, then $v_1$ cannot touch $v_2$ strictly by below at an interior point.

Assume by contradiction that $v_1$ touches $v_2$ strictly from below at the point $(x_0,t_0)$. This touching point  cannot be in $\C_1^- \cup \C_1^+$ according to the classical comparison principle for parabolic equations. This implies that the contact point must lie on $\{ x_n=0\}$. 

The proof consists of a series of reductions, requiring careful tracking of how every condition evolves at each step.

    The first reduction allows us to assume that $v_1$ and $v_2$ have, respectively, a semiconvex/semiconcave trace in $x'$ variable, namely \begin{equation} \label{eq:semicon}
        D_{x'}^2 v_1 \geq -I_{n-1}, \; D_{x'}^2 v_2 \leq I_{n-1}
    \end{equation}and also \begin{equation} \label{eq:smallnorm}
        \| v_i\|_{L^\infty(\C_1)} \leq 1.
    \end{equation}
    This can be done by dividing by a large constant and replacing the subsolution with its sup-convolutions in the $x'$ variable, i.e. considering

\begin{equation}\label{eq:conv_env}
v_\varepsilon(x',x_n,t)
:=
\sup_{y'\in Q'_1}
\left\{
v(y',x_n,t)-\frac{|y'-x'|^2}{2\varepsilon}
\right\}.    
\end{equation}
and the supersolution with its inf-convolution $v^\varepsilon$. Note that this smoothing preserves the sub/super solution property. Since $(v_1)_\varepsilon$ and $(v_2)^{\varepsilon}$  might not touch anymore, we translate them vertically until they touch again, at the new point
     $(y',0,s_\varepsilon) \in \C_1'$.  Note that it is possible to slightly perturb $(v_1)_\varepsilon$ and $(v_2)^{\varepsilon}$ so that they are strict sub/supersolutions respectively, by considering, for $t\leq s_\varepsilon$, 
     \begin{align*}
         &(\bar v_1)_\varepsilon = (v_1)_{\varepsilon} +\lambda\delta(t-s_\varepsilon) +K\delta( |x_n|^2 + |x_n|),\\
         &(\bar v_2)^\varepsilon = (v_2)^\varepsilon - \lambda\delta (t -s_\varepsilon) -K \delta ( |x_n|^2 + |x_n|) .
     \end{align*}
     
       Note also that $(\bar v_1)_\varepsilon (y',0,s_\varepsilon)=(\bar v_2)^{\varepsilon}(y', 0,s_\varepsilon)$ and
    \begin{equation} \label{eq:contradiction1}
     (\bar v_1)_{\varepsilon} \leq ( \bar v_2 )^{\varepsilon}+ 2\lambda\delta (t-s_\varepsilon) \; \mbox{ on }\C_1'.
       \end{equation}
Since we perturbed again $(v_1)_\varepsilon$ and $(v_2)^{\varepsilon}$, there may be more than one point where their graphs cross. We consider
        \[
    s_\varepsilon^*:= \sup \{ \, \bar t \in [-1,s_\varepsilon) : (\bar v_2)^\varepsilon (x',0,t) - (\bar v_1)_\varepsilon (x',0,t) >0 \text{ for }t \in [-1,\bar t\,),  \, x'\in Q'_1\},
    \]
    corresponding to the first time where their graphs cross at the interface $\C_1'$.
    
    To ensure that $s_{\varepsilon}^* >-1$ we choose $\delta$ small such that
    \[
    c := \inf_{x' \in Q_1'}\big\{(\bar v_2)^{\varepsilon} (x',0,-1)- (\bar v_1)_\varepsilon (x',0,-1)\big\} >0.
    \]
    We can do this
    since $v_1$ is touching $v_2$ strictly from below  at an interior point of $\C_1$ and $(v_1)_{\varepsilon} \to v_1, \; (v_2)^{\varepsilon} \to v_2$ uniformly.  
    
       At this point, it is not difficult to see that $(\bar v_1)_\varepsilon$ must touch $(\bar v_2)^\varepsilon$ strictly from below  at a point $(z',0,s_\varepsilon^*)$ in $\C_r(z',0,s_\varepsilon^*)$  for some $r$ small. In fact, if there exists a touching point $(x,t) \in \C_r(z',0,s_\varepsilon^*)$  for which this is not the case, then it is either  an interior point of $\C_r^\pm(z',0,s_\varepsilon^*)$ which contradicts parabolic comparison principle or $(x,t)=(x',0,s_0)$ with $s_0<s_\varepsilon^*$ but it would contradict the definition of $s_\varepsilon^*$. 
       
       Now, after a dilation and a translation, renaming the strict subsolution by $v_1$ and the strict supersolution by  $v_2$, we may assume that 
    \[
    v_1 (0,0)=v_2(0,0)= 0, \; v_1 < v_2 \; \text{ in } \C_1 \setminus \{(0,0)\}.
    \]

     Once again, with these hypotheses in force, we can modify $v_1,v_2$ in the interior of $\C_{1/2}^\pm$ so that they solve the equations in each side without affecting the sub/supersolution transmission condition on $\C_{1/2}'$ and \eqref{eq:semicon}, \eqref{eq:smallnorm} and \eqref{eq:contradiction1} hold on $\{ x_n=0\}$.
     To achieve this, for $i=1,2$, we define $\tilde v_i^\pm$ as 
     \begin{equation} \label{eq:tran_modified}
         \begin{aligned}
        \begin{cases}
            \p_t \tilde v_i^\pm = \tr(A^\pm(t)D^2 \tilde v_i^\pm) & \text{ in }\C_{1/2}^\pm \\
            \tilde v_i^\pm = v_i &\text{ on }\p_p \C_{1/2}^\pm
        \end{cases}
    \end{aligned}
    \end{equation}
    and call 
    \[
    \tilde v_i := \tilde v_i^+ \chi_{ \C_{1/2}^+ \cup \C_{1/2}'} + \tilde v_i^- \chi_{ \C_{1/2}^-}.
    \] 
    Notice that this modification preserves the sub/supersolution transmission condition on $\mathcal{C}_{1/2}'$. Indeed, since $v_1$ is a viscosity subsolution, the maximum principle implies $\tilde v_1^\pm \ge v_1$ in $\mathcal{C}_{1/2}^\pm$. Since $\tilde v_1$ and $v_1$ coincide on the interface $\{x_n = 0\}$, any test function $\varphi$ that touches $\tilde v_1$ from above in $\C_{1/2}'$ also touches $v_1$ from above at the same point. Because $v_1$ is a viscosity subsolution, $\varphi$ automatically satisfies the required transmission inequality. Thus, $\tilde v_1$ is a viscosity subsolution on $\mathcal{C}_{1/2}'$. A symmetric argument guarantees that $\tilde v_2$ remains a viscosity supersolution on $\mathcal{C}_{1/2}'$.
    
    We may also assume that $\nabla' \tilde v_i (0,0) = 0$ after subtracting from each $\tilde v_i$ the function $\hat a \cdot x' + b(t)$ with
    \[
    \hat a=\nabla' \tilde v_1 (0,0)=\nabla' \tilde v_2 (0,0)
    \]
    and  $\p_t b(t)=- \lambda  \hat a \cdot \hat \gamma (t), \, b(0)=0$. This change does not affect the condition \eqref{eq:contradiction1} in $\{x_n=0 \}$, however it introduces a bounded right-hand side of order $\lambda$ in \eqref{eq:tran_modified}. We may rename $\tilde v_i$ simply as $v_i$.

By \eqref{eq:semicon}, \eqref{eq:smallnorm} and \eqref{eq:contradiction1} using Lemma \ref{lm:lipschitz_trace} for any $(x',0,t) \in \P'_r$ we obtain
\begin{equation} \label{eq:est_v1_v2}
v_1 \geq  -Cr \quad \text{ and }\quad  v_2 \leq Cr \qquad \text{on } \P_r',
 \end{equation}
 where $C$ depends on the Lipschitz norm of $v_1$ and $v_2$, which may depend on $\varepsilon$ from \eqref{eq:conv_env}. From here, using the fact that $v_1\leq v_2$ we get that
  $v_1$ and $v_2$ are Lipschitz in the parabolic sense at the origin in directions $x'$ and $t$. Now we can use pointwise $C^{ \alpha}$ parabolic estimates at the origin to get, for every ${ \alpha}<1$
 \[
 \osc_{\mathcal{P}_r}v_i\leq Cr^{ \alpha} \quad \text{for all }r>0.
 \]
We iterate this argument by defining

\[
w_i:= r^{-{ \alpha}} v_i(rx,r^2 t) \quad \text{for }i=1,2,
\]
which satisfy a similar problem with $\tilde \lambda= \lambda r$, while conditions \eqref{eq:semicon}, \eqref{eq:smallnorm}, and \eqref{eq:contradiction1} continue to hold for $w_i$. Thus, applying Lemma \ref{lm:lipschitz_trace} once again, we obtain
\[
w_1(x',0,t) \geq  -Cr^{\frac 2 3} \quad \text{ and }\quad  w_2(x',0,t) \leq Cr^{ \frac 2 3} \qquad \text{for } x \in Q_{1/2}'.
\]
Rewriting these bounds in terms of $v_1$ and $v_2$, we improve upon the estimates in \eqref{eq:est_v1_v2} to yield
\begin{equation} \label{eq:est_v1_v2_2}
    v_1 \geq  -Cr^{{ \alpha}+ \frac 2 3} \quad \text{ and }\quad  v_2 \leq Cr^{{ \alpha}+ \frac 2 3} \qquad \text{on } \P_r'.
\end{equation}
This, in turn, implies $C^{{ \alpha}+\frac{2}{3}}$ regularity in the parabolic sense at the origin in every variable.

Next, we modify $v_i$ again by subtracting the functions $\p_n  v_i^+ (0) (x_n)_+ + \p_n  v_i^- (0) (x_n)_- + b_i(t)$, where $b_i(t)$ is chosen so that $\p_t b_i(t)=  \lambda \left(\gamma^+ \p_n v_i^+ (0) - \gamma^{-} \p_n  v_i^-(0) \right)$ and $b_i(0)=0$. In this way, the newly defined functions remain respectively a subsolution and a supersolution with a bounded right-hand side of order $\lambda$. 
Furthermore, since $v_1$ touches $v_2$ from below at the origin, we have $\p_n v_1^+(0) - \p_n v_2^+(0) \leq 0$ and $\p_n v_1^-(0) - \p_n v_2^-(0) \geq 0$. This ensures that the modification does not violate condition \eqref{eq:contradiction1}. Indeed, by applying \eqref{eq:contradiction1} to $v_1$ and $v_2$, we get on $\{x_n = 0\}$
\begin{align*}
    v_1(x',0,t) &- v_2(x',0,t) - (b_1(t) - b_2(t)) = v_1(x',0,t) - v_2(x',0,t) + \int_{t}^{0} \p_\tau ( b_1 -b_2)(\tau) \, d\tau \\
    &\leq 2\lambda \delta t +\lambda \int_{t}^{0} \left( \gamma^+(\tau) (\p_n v_1^+(0) - \p_n v_2^+(0)) -\gamma^{-}(\tau) (\p_n v_1^-(0) - \p_n v_2^-(0))\right) \, d\tau \leq 2\lambda\delta t. 
\end{align*}
Repeating the previous argument shows that \eqref{eq:est_v1_v2_2} actually holds with $r^{{ \alpha}+\frac{4}{3}}$ in place of $r^{{ \alpha}+\frac{2}{3}}$. Because ${ \alpha} + \frac{4}{3}>2$, it follows that $v_1(0,t)\geq -C|t|^{1+\beta}$ and $v_2(0,t)\leq C|t|^{1+\beta}$ for all sufficiently small $t<0$ and some $\beta>0$, giving a contradiction with \eqref{eq:contradiction1}.
\end{customproof}

Since both the operator and the transmission condition are linear, we can obtain the following result making use of Theorem \ref{Thm:CP}.

\begin{Proposition} \label{prop:diff_sol}
        If $u$ and $v$ are viscosity solutions to \eqref{eq:main1}, then so is $u-v$.
\end{Proposition}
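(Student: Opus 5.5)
Since $u-v$ is only known to be continuous, I will not try to test it directly. Instead I will follow the standard route for linear problems with a comparison principle: show that $w:=u-v$ is a viscosity subsolution by contradiction. The supersolution property then follows by exchanging the roles of $u$ and $v$, or by applying the same argument to $v-u$.

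Suppose a test function $\varphi$ touches $w$ from above at a point $(x_0,t_0)$ and the subsolution inequality fails strictly there. In the interior of $\C_1^\pm$ this is classical linear parabolic theory, so the real case is $(x_0,t_0)\in\C_1'$. The failure reads
\[
\p_t\varphi > -\hat\gamma\cdot\nabla'\varphi+\gamma^+\p_n\varphi^+-\gamma^-\p_n\varphi^-
\]
at $(x_0,t_0)$. First I will make the touching strict by adding a small penalization such as $\eta(|x-x_0|^2+|t-t_0|)$. I will also add a small multiple of $|x_n|$ to create room in the normal derivatives. For small $\eta$ and a small cylinder $\C_\rho(x_0,t_0)$, the modified $\varphi$ is then a classical strict supersolution on both sides and on the interface. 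Here I use that the operators are linear with coefficients depending only on $t$, and that by the remark after Definition \ref{def:visc} the test function may be taken $C^2_x$ on each side.

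Next set $\psi:=v+\varphi$. Since the problem is linear and $\varphi$ is a classical strict supersolution, $\psi$ is a viscosity supersolution in $\C_\rho(x_0,t_0)$. Indeed, if a test function $\theta$ touches $\psi$ from below, then $\theta-\varphi$ touches $v$ from below, and the inequalities add. The strict touching gives $u\le\psi-\mu$ on $\p_p\C_\rho(x_0,t_0)$ for some $\mu>0$, while $u(x_0,t_0)=\psi(x_0,t_0)$. Applying Theorem \ref{Thm:CP}, rescaled and translated to $\C_\rho(x_0,t_0)$, to the subsolution $u$ and the supersolution $\psi-\mu$ gives $u\le\psi-\mu$ inside, which contradicts equality at $(x_0,t_0)$.

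The main obstacle is checking that $\psi=v+\varphi$ really is a viscosity supersolution at interface points. Test functions there are only piecewise $C^1$ up to $\{x_n=0\}$ from each side, so I will verify that $\theta-\varphi$ is an admissible test function in the sense of Definition \ref{def:visc} (ii). This works because $\varphi$ is $C^1$ up to the interface from both sides, and the one-sided normal derivatives subtract consistently. The remaining checks are routine: Theorem \ref{Thm:CP} is invariant under the hyperbolic rescaling, with $\lambda$ replaced by $\lambda\rho$ and the coefficients still satisfying \eqref{Assump1}, and adding the constant $-\mu$ preserves the solution property.
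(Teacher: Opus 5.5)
There is a genuine gap in the step where you assert that, after adding $\eta(|x-x_0|^2+|t-t_0|)$ and a small multiple of $|x_n|$, the modified $\varphi$ is a classical strict supersolution \emph{on both sides} of the interface in $\C_\rho(x_0,t_0)$. At an interface point the viscosity definition constrains only the first derivatives of the test function, through the transmission inequality. It gives no information on the sign of $\lambda\p_t\varphi-\tr(A^\pm D^2\varphi^\pm)$ at or near $(x_0,t_0)$.

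To see that this sign can be bad, replace $\varphi$ by $\varphi+Mx_n^2$. This is still an admissible test function touching $u-v$ from above at $(x_0,t_0)$, with the same failing transmission inequality, since first derivatives on $\{x_n=0\}$ are unchanged. Its interior operator, however, is $\lambda\p_t\varphi-\tr(A^\pm D^2\varphi)-2MA^\pm_{nn}(t)$, which is negative near the point for $M$ large. Your modifications do not repair this: the quadratic penalization contributes $-\lambda\eta-2\eta\tr A^\pm<0$, and $|x_n|$ has no second derivative off the interface. So if $\theta$ touches $\psi=v+\varphi$ from below at a point of $\C^\pm_\rho(x_0,t_0)$, you only get $\lambda\p_t\theta-\tr(A^\pm D^2\theta)\geq\lambda\p_t\varphi-\tr(A^\pm D^2\varphi)$. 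Hence $\psi$ need not be a supersolution there, and Theorem \ref{Thm:CP} cannot be applied. The obstacle you flagged, admissibility of $\theta-\varphi$, is routine; this interior inequality is the real one.

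The missing idea, which is what the paper does, is to replace $\varphi$ by $\varphi+\eta|x_n|-Cx_n^2$. The term $-Cx_n^2$ adds $2CA^\pm_{nn}(t)\geq 2C/K$ to the interior operator, so for $C$ large (depending on the $C^2$ and time-derivative bounds of $\varphi$) the interior inequality becomes strict. The term $\eta|x_n|$ offsets $-Cx_n^2$, so the new function still lies above $u-v$ in $Q_r(x_0)$ provided $r\leq\eta/C$. It changes the transmission expression $\p_t\varphi+\hat\gamma\cdot\nabla'\varphi-\gamma^+\p_n\varphi^++\gamma^-\p_n\varphi^-$ only by $-\eta(\gamma^++\gamma^-)$. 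This is harmless for $\eta$ small, because that expression is strictly positive by the contradiction hypothesis.

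Note that this is the opposite of "creating room". A positive multiple of $|x_n|$ consumes room in the transmission inequality, and a negative one would destroy the touching from above; its only role is to compensate $-Cx_n^2$. Choose the constants in the order $C$, then $\eta$, then $r$. With this modification the rest of your argument goes through and coincides with the paper's proof: adding $v$, checking the test functions, and comparing $u$ with $v+\varphi-\mu$ via Theorem \ref{Thm:CP} rescaled to $\C_\rho(x_0,t_0)$.
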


\begin{proof}
    We  show that $u-v$ is a viscosity subsolution in the sense of Definition \ref{def:visc}. The supersolution case follows similarly. Consider a test function $\varphi$ which touches from above $u-v$ at a point $(x_0,t_0) \in \C_1$. We may assume the touch is strict after replacing $\varphi$ with $\varphi + (t-t_0)^2 + |x-x_0|^4$. Either $(x_0,t_0) \in \C_1^\pm$ or $(x_0,t_0) \in \C_1'$. We limit ourselves to consider the latter case which is more delicate. If $(x_0,t_0) \in \C_1^\pm$ a simpler standard viscosity argument applies. Let  $(x_0,t_0) \in \C_1'$ and $\varphi \in C_x^{2}(\overline{ \C_\delta^\pm(x_0,t_0)}) \cap  C_t^{1}(\overline{ \C_\delta^\pm(x_0,t_0)})$.
    Assume by contradiction that 
    \begin{equation}\label{eq:contr_1}
            \p_t\varphi(x_0,t_0)  +\hat{\gamma}(t_0) \cdot  \nabla' \varphi(x_0,t_0)  - \gamma^+( t_0) \p_n \varphi^+(x_0,t_0) + \gamma^-(t_0) \p_n \varphi^-(x_0,t_0) >0 . 
    \end{equation}
    At this point, we can further modify the test function to ensure that it acts as a strict supersolution in the interior domains as well. Let us replace $\varphi$ with
    \[
    \tilde{\varphi}(x,t) := \varphi(x,t) + \eta |x_n| - C x_n^2 \quad \text{ in } \C_r(x_0,t_0) ,
    \]
    where $\eta,r > 0$ are small parameters and $C > 0$ is a large constant. By choosing $r \leq \delta$ such that $\eta |x_n| - C x_n^2\geq 0$ in $Q_{r}(x_0)$  we have $\tilde{\varphi} \ge \varphi \ge u-v$ in $\C_{r}(x_0,t_0)$. Thus, $\tilde{\varphi}$ still touches $u-v$ strictly from above at $(x_0,t_0)$. 
    Furthermore, for $x_n \neq 0$, we have  $D^2 \tilde{\varphi} = D^2 \varphi - 2C (e_n \otimes e_n)$. Exploiting the uniform ellipticity $A^\pm_{nn} \ge K^{-1} > 0$, by choosing $C$  large we guarantee that
    \begin{equation} \label{eq:contr_interior}
    \lambda \p_t \tilde{\varphi} - \tr(A^\pm(t) D^2 \tilde{\varphi}) = \lambda \p_t \varphi - \tr(A^\pm(t) D^2 \varphi) + 2C A^\pm_{nn}(t) > 0 \quad \text{in } \C_r^\pm(x_0,t_0).
    \end{equation}
    On the interface $\{x_n = 0\}$, in view of \eqref{eq:contr_1} choosing $\eta$ sufficiently small, the transmission operator yields
    \begin{align*}
            &\p_t\tilde{\varphi}(x_0,t_0)   \!+\!\hat{\gamma}(t_0)  \cdot  \nabla' \tilde{\varphi}(x_0,t_0)  \!-\! \gamma^+(t_0) \p_n \tilde{\varphi}^+(x_0,t_0) \!+\! \gamma^-(t_0) \p_n \tilde{\varphi}^-(x_0,t_0) \\ 
            &=  \p_t\varphi(x_0,t_0)  \!+\!\hat{\gamma}(t_0) \!\cdot\!  \nabla' \varphi(x_0,t_0)  \!-\! \gamma^+\!( t_0) \p_n \varphi^+(x_0,t_0) \!+\! \gamma^-\!(t_0) \p_n \varphi^-\!(x_0,t_0)   \!-\! \eta(\gamma^+\!(t_0) \!+\! \gamma^-\!(t_0))>0.
    \end{align*}
Hence, up to renaming $\tilde{\varphi}$ back to $\varphi$, we can assume without loss of generality that $\varphi$ satisfies both \eqref{eq:contr_1} and the strict interior inequality \eqref{eq:contr_interior}.
    After reducing $r$, if necessary, by the continuity of the derivatives of $\varphi$ and of the coefficients $\hat{\gamma}, \gamma^\pm, A^\pm$, the strict inequality \eqref{eq:contr_1} holds for all points $(x,t) \in \C_r'(x_0,t_0)$ and  \eqref{eq:contr_interior} holds in $\C_r^\pm(x_0,t_0)$.

We now show that $v+\varphi$ is a strict viscosity supersolution  in $\C_r(x_0,t_0)$. Let $(x_1,t_1) \in \C_r(x_0,t_0)$ and let $\psi$ touch $v+\varphi$ at $(x_1,t_1)$ from below. Then $\psi -\varphi$ touches $v$ at $(x_1,t_1)$ from below. 
    If $(x_1,t_1) \in \C_r'(x_0,t_0)$, the supersolution property of $v$ yields
    \begin{align*}
          \p_t(\psi - \varphi)\!(x_1,t_1)  \!+ \!\hat{\gamma}(t_1)\!\cdot \! \nabla' (\psi - \varphi)\!(x_1,t_1)  \!- \! \gamma^+\!( t_1) \p_n (\psi - \varphi)^+\!(x_1,t_1) \!+ \! \gamma^-\!(t_1) \p_n (\psi - \varphi)^-\!(x_1,t_1) \geq 0.
    \end{align*}
    This together with \eqref{eq:contr_1} evaluated at $(x_1,t_1)$ implies that the transmission operator on $\psi$ is strictly positive. Similarly, if $(x_1,t_1) \in \C_r^\pm(x_0,t_0)$, evaluating the interior operator on $\psi - \varphi$ and adding \eqref{eq:contr_interior} ensures that $\lambda \p_t \psi(x_1,t_1) - \tr(A^\pm D^2\psi)(x_1,t_1) > 0$.

Thus, $v + \varphi$ is a strict supersolution in $\C_r(x_0,t_0)$ that touches $u$ strictly from above at $(x_0,t_0)$. Hence, since $(x_0,t_0)$ is a strict maximum for $u-v-\varphi$, there exists $\sigma > 0$ such that $u \leq v + \varphi - \sigma$ on $\p_p \C_{r}(x_0,t_0)$. By Theorem \ref{Thm:CP}, we deduce $u \leq v + \varphi - \sigma$ everywhere in $\C_{r}(x_0,t_0)$, which gives a contradiction at $(x_0,t_0)$ since $(u-v)(x_0,t_0)=\varphi(x_0,t_0)$.

\end{proof}

\section{Existence of viscosity solutions}
\label{sec:existence}

The goal of this section is to prove existence of viscosity solutions to \eqref{eq:main1} with continuous boundary data $g \in C(\partial_p \mathcal{C}_1)$ via Perron's method.

\begin{Theorem}[Existence]
\label{thm:existence}
Let $g \in C(\partial_p \mathcal{C}_1)$. There exists a unique viscosity solution $u \in C(\overline{\mathcal{C}_1})$ to \eqref{eq:main1} satisfying $u = g$ on $\partial_p \mathcal{C}_1$.
\end{Theorem}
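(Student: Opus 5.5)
We follow the standard Perron construction, with uniqueness supplied by Corollary \ref{cor:uniqueness}, so only existence needs proof. Consider the class
\[
\mathcal{A}:=\{ v \in USC(\overline{\C_1}) : v \text{ viscosity subsolution to \eqref{eq:main1}},\ v\le g \text{ on } \p_p\C_1\}
\]
and set $u:=\sup_{v\in\mathcal A} v$. The first task is to show that $\mathcal A$ is nonempty and uniformly bounded above. Constants are solutions, because the equation and the transmission condition in \eqref{eq:main1} annihilate them. Hence $-\|g\|_\infty\in\mathcal A$, and Theorem \ref{Thm:CP} applied against the supersolution $\|g\|_\infty$ gives $|u|\le \|g\|_\infty$.

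\textbf{Barriers.} For every $(x_0,t_0)\in\p_p\C_1$ and every $\eta>0$ we need a subsolution $w^-_{\eta}\in\mathcal A$ and a supersolution $w^+_\eta$ with $w^\pm_\eta\gtrless g$ on $\p_p\C_1$ and $|w^\pm_\eta(x_0,t_0)-g(x_0,t_0)|\le \eta$. We distinguish three types of boundary points.
\begin{itemize}
\item On the initial slab $\{t=-1\}$, away from the lateral boundary, we use $g(x_0,-1)\pm\eta\pm M(|x-x_0|^2+C(t+1))$. The constant $C$ is chosen large so that both the interior Pucci inequality and the interface inequality hold. The interface condition is harmless here, since the only terms involved are $\p_t$ and bounded gradients. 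The one care needed is that $|x-x_0|^2$ has equal normal derivatives from both sides, so that at the interface the jump term is $\gamma^+-\gamma^-$ times a bounded quantity, which is absorbed by $C$.
\item On the lateral faces away from $\{x_n=0\}$ we use the classical exterior-cube barriers for uniformly parabolic equations. These are localized to one phase and extended by a large constant.
\item The delicate points are lateral boundary points lying on $\{x_n=0\}$, i.e. $|x_i|=1$ for some $i\le n-1$ with $x_n=0$. There we take $g(x_0,t_0)+\eta+M\big(L(1-x_i\,\mathrm{sgn}(x_{0,i})) + \mu(|x_n|-x_n^2\,C')+ C(t-t_0)_+\ \text{-type terms}\big)$. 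More precisely, we take a linear function in $x_i$ that vanishes at the face plus the kink $\mu|x_n|$, which makes the interface inequality strict because $\gamma^\pm\ge K^{-1}$. The concave correction $-C'x_n^2$ handles the interior equation.
\end{itemize}
The time-dependence of the coefficients is only Lipschitz, but the barriers use constant-in-time bounds from \eqref{Assump1}, so this causes no difficulty.

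\textbf{Perron steps.} With the barriers in hand, the standard argument runs as follows. First, the upper envelope $u^*$ is a subsolution. This is the stability of viscosity subsolutions under suprema, and on $\C_1'$ it works verbatim with test functions that are $C^1_x$ up to the interface from each side. Second, the lower envelope $u_*$ is a supersolution, by the bump construction: if a test function $\varphi$ touches $u_*$ from below at $(x_0,t_0)$ and fails the supersolution inequality strictly, we replace $u$ by $\max(u,\varphi+\delta-|x-x_0|^4-(t-t_0)^2)$ locally. The interface case is the one that requires attention. As in the proof of Proposition \ref{prop:diff_sol}, we modify $\varphi$ by $\eta|x_n|-Cx_n^2$ in the opposite sign, so that $\varphi$ is a strict subsolution both on $\C_r'$ and in $\C_r^\pm$; this contradicts maximality. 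Third, the barriers give $u^*\le g\le u_*$ on $\p_p\C_1$. Theorem \ref{Thm:CP} then yields $u^*\le u_*$, so $u$ is continuous on $\overline{\C_1}$ and is a viscosity solution attaining $g$.

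\textbf{Main obstacle.} We expect the hardest step to be the barrier at lateral points on the interface, together with checking that the comparison principle applies to the semicontinuous envelopes $u^*,u_*$. Theorem \ref{Thm:CP} is stated for sub/supersolutions, so it should cover them. For the barrier, we would first try to verify the strict interface inequality directly. With $\p_n$ of the kink equal to $\pm\mu$, the transmission term contributes at least $\mu(\gamma^++\gamma^-)\ge 2\mu K^{-1}$. This must dominate $K|\nabla'|\cdot ML$ plus the time derivative. We would choose $\mu\gg KML$ first, then $C'$ so that the interior equation holds, and finally restrict to a thin neighborhood in $x_n$ so that the term $-C'x_n^2$ stays small.
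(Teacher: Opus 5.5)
\textbf{Verdict.} The Perron part of your argument is fine. Your global framework, with constants as global barriers and $\eta$-localized barriers for continuous $g$, is a legitimate shortcut compared with the paper's route through smooth data, the Pucci-based barriers $\underline{u},\overline{u}$, and uniform approximation. The genuine gap is exactly at the step you flagged: your barrier at lateral boundary points lying on $\{x_n=0\}$ fails, and for a structural reason.

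\textbf{Sign error in the kink.} For a supersolution, the interface inequality is $\partial_t W\ge -\hat\gamma\cdot\nabla' W+\gamma^+\partial_n W^+-\gamma^-\partial_n W^-$. In $W=g(x_0,t_0)+\eta+Mw$, the convex kink $M\mu|x_n|$ contributes $+M\mu(\gamma^++\gamma^-)$ to the right-hand side, which is the side $\partial_t W$ must dominate. The inequality you propose to verify (``$2\mu K^{-1}$ dominates $K|\nabla' w|$ plus the time derivative'') is the \emph{subsolution} inequality. So your $W$ is a supersolution in $\mathcal{C}_1^\pm$ (thanks to $-C'x_n^2$) but a strict subsolution on $\mathcal{C}_1'$. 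A convex kink is a subsolution of the transmission condition, just as a convex function is a subsolution of the heat equation. The lower barrier has the mirror-image defect.

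You also cannot let $\partial_t W$ do the work. The requirements $W(x_0,t_0)=g(x_0,t_0)+\eta$ and $W\ge g$ on the face for $t<t_0$ prevent $W$ from growing fast in time at interface points near the corner.

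\textbf{Flipping the sign does not help.} With the correct, concave, kink $-\mu|x_n|$, you get $w<0$ on the lateral face near $x_0$ (for instance at $x''=x_0''$, $t=t_0$, $x_n\neq 0$), so $W\ge g$ fails on $\partial_p\mathcal{C}_1$. This is not an artifact of your particular choice. Take polar coordinates $(\rho,\theta)$ in the $(x_i,x_n)$-plane centered at the corner, $A^\pm=I$, and $\hat\gamma$ of length $K$ pointing toward the face. A stationary profile $w=\rho f(\theta)$ with $f(0)>0$ then needs:
\begin{itemize}
\item $f''+f\le 0$ on each side;
\item a concave kink $f'(0^\pm)=\mp b^\pm$ with $\gamma^+b^++\gamma^-b^-\ge K f(0)>0$.
\end{itemize}
Sturm comparison with $f(0)\cos\theta\mp b^\pm\sin\theta$ gives $f(\pm\pi/2)\le -b^\pm$. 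Since at least one of $b^\pm$ is positive, $w<0$ on part of the face.

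\textbf{The missing idea.} You need a \emph{sublinear} barrier of Miller--Lieberman type, which is what the paper uses (following Erneta):
\[
w=\rho^\beta v(\theta)+c_0|x''-x_0''|^2+(t-t_0)^2,
\]
with $\beta\in(0,1)$ small and $v(\theta)=2-e^{2K^2(|\theta|-\pi/2)}\in(1,2)$. Its properties are:
\begin{itemize}
\item positivity is automatic;
\item the concave angular kink of $v$ at $\theta=0$ is exactly $K^2$ times the radial slope $\beta v(0)$, so it beats the tangential term despite $|\hat\gamma|\le K$ and $\gamma^\pm\ge K^{-1}$;
\item the singular factors $\rho^{\beta-2}$ and $\rho^{\beta-1}$ absorb the time and $x''$ contributions near the corner.
\end{itemize}

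\textbf{A smaller point.} Your term $C(t-t_0)_+$ vanishes for $t\le t_0$, so it gives no positivity backward in time. Together with the missing $x''$-localization, you need terms like $(t-t_0)^2$ and $|x''-x_0''|^2$ in any case.
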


First, we construct suitable global barriers to ensure that the Perron class is nonempty.

\begin{Lemma}
\label{lem:barriers_existence}
Let $g$ be a smooth function on $\partial_p \mathcal{C}_1$. There exist a viscosity subsolution $\underline{u} \in C^2(\mathcal{C}_1^\pm) \cap C(\overline{\mathcal{C}_1}) $ and a viscosity supersolution $\overline{u} \in C^2(\mathcal{C}_1^\pm) \cap C(\overline{\mathcal{C}_1})$ to \eqref{eq:main1} such that 
\begin{equation}
    \begin{cases}
        \underline{u} \leq \overline{u} \quad &\text{ in } \C_1, \\ 
        \underline{u} \le g \le \overline{u} \quad &\text{ on } \partial_p \mathcal{C}_1.
    \end{cases}
\end{equation}

\end{Lemma}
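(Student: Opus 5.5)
The plan is to build $\underline{u}$ and $\overline{u}$ as explicit, piecewise smooth functions. Each will be a smooth extension of $g$, corrected by terms that make the interior equations and the transmission condition hold with the right sign. Since $g$ is smooth on $\partial_p\C_1$, extend it to a function $G\in C^2(\overline{\C_1})$. Let $L := \|G\|_{C^2}$ bound $|\p_t G|$, $|\nabla G|$ and $|D^2G|$. Note that $G$ is smooth across $\{x_n=0\}$, so $\p_n G^+=\p_n G^-$ there.

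For the supersolution, look for
\[
\overline{u}(x,t) := G(x,t) + M\big(2|x_n| - x_n^2\big) + N(t+1) + h(x),
\]
where $M,N$ are large constants and $h$ is a nonnegative correction vanishing on the lateral boundary. The roles of the terms are as follows.

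\emph{Interior equation.} In $\C_1^\pm$ we have $D^2(2|x_n|-x_n^2) = -2e_n\otimes e_n$. Hence
\[
\lambda\p_t\overline{u}-\tr(A^\pm D^2\overline{u}) \ge -\lambda L - nKL + 2K^{-1}M + \lambda N - \tr(A^\pm D^2 h),
\]
and this is $\ge 0$ once $M \ge CK(\lambda L+nKL)$, provided $h$ is concave or absent.

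\emph{Transmission condition.} On $\{x_n=0\}$, the kink $2|x_n|$ contributes $\p_n\overline{u}^+ - \p_n\overline{u}^- = 4M$. The supersolution inequality requires
\[
\p_t\overline{u} \ge -\hat\gamma\cdot\nabla'\overline{u} + \gamma^+\p_n\overline{u}^+ - \gamma^-\p_n\overline{u}^-.
\]
The left side is $\p_tG+N$. The right side equals $-\hat\gamma\cdot\nabla' G + (\gamma^+-\gamma^-)\p_nG + 2M(\gamma^++\gamma^-)$, up to the $h$ terms. So choosing $N \ge 4KM + 3KL$ makes it hold. The same argument with the signs reversed gives $\underline{u} := G - M(2|x_n|-x_n^2) - N(t+1)$.

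\emph{Boundary data.} The difficulty is that these added terms spoil the boundary values. At $t=-1$ the term $N(t+1)$ vanishes, but $M(2|x_n|-x_n^2)\ge 0$ does not, and on the lateral sides $\{|x_i|=1\}$ with $t>-1$ the term $N(t+1)$ is positive. For the supersolution this is harmless, because it only needs $\overline{u}\ge g$, which holds since all added terms are nonnegative. Likewise $\underline{u}\le g$ holds. So the requirement $\underline{u}\le g\le\overline{u}$ on $\partial_p\C_1$ is automatic, and $h$ can be dropped altogether. Finally, $\underline{u}\le G\le\overline{u}$ in $\C_1$ is immediate.

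\emph{Regularity and the viscosity property.} Both functions are $C^2$ in $\C_1^\pm$, continuous on $\overline{\C_1}$, and $C^1$ up to the interface from each side. Hence they are classical one-sided sub/supersolutions. The viscosity inequality at interface points follows by comparing one-sided derivatives of a touching test function with those of $\overline{u}$, in the standard way. Concretely, if $\varphi$ touches $\overline{u}$ from below at $(x_0,t_0)\in\C_1'$, then:
\begin{itemize}
\item $\nabla'\varphi=\nabla'\overline{u}$,
\item $\p_n\varphi^+\le\p_n\overline{u}^+$,
\item $\p_n\varphi^-\ge\p_n\overline{u}^-$,
\item $\p_t\varphi\ge\p_t\overline{u}$, since touching occurs only for $t\le t_0$.
\end{itemize}
Because $\gamma^\pm>0$, these inequalities transfer the classical inequality to $\varphi$.

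The main point to check carefully is this last viscosity verification at the interface, including the sign of the time derivative at the top time $t_0$. One also needs the constants $M$ and $N$ to depend only on $L$, $K$, $n$ and $\lambda_0$, which they do.
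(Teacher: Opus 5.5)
Your construction is correct, but it takes a different route from the paper's.

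\textbf{The paper's route.} The base function is the solution $\underline\psi$ of the extremal problem $\lambda\p_t\underline\psi-\mathcal M_K^-(D^2\underline\psi)=0$ in $\C_1$ with $\underline\psi=g$ on $\p_p\C_1$. This makes the interior inequality automatic for every admissible $A^\pm$. The paper then adds the kink $C_1(|x_n|-1)$. It is affine on each side, so the interior equation does not see it. Its jump in normal derivatives contributes $-C_1(\gamma^++\gamma^-)\le -2C_1K^{-1}$ to $\p_t u+\hat\gamma\cdot\nabla' u-\gamma^+\p_n u^++\gamma^-\p_n u^-$, which absorbs the bounded derivatives of $\underline\psi$ on $\C_1'$. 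The supersolution is built symmetrically from $\mathcal M_K^+$, and $\underline u\le\overline u$ is deduced from Theorem \ref{Thm:CP}.

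\textbf{Your route.} You start from a $C^2$ extension $G$ of $g$. The concave term $M(2|x_n|-x_n^2)$ handles the interior equation, using $A^\pm_{nn}\ge K^{-1}$. The term $N(t+1)$ handles the transmission inequality. Your kink is oriented opposite to the paper's, so it works \emph{against} the transmission inequality; that is why $N$ must dominate $4KM$. It could simply be replaced by $M(1-x_n^2)$, which has no jump. All your corrections have a sign on $\overline{\C_1}$, so the boundary inequalities and $\underline u\le G\le \overline u$ are immediate. Your interface check is exactly what is needed: one-sided normal derivatives, backward-in-time touching giving $\p_t\varphi\ge\p_t\overline u$, and $\gamma^\pm>0$.

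\textbf{What each buys.} Your version is self-contained. It avoids the existence and up-to-the-boundary Lipschitz theory for Pucci-type parabolic equations that the paper invokes to bound $\p_t\underline\psi$ and $\nabla\underline\psi$ on $\C_1'$, including near the edges where $\C_1'$ meets $\p_p\C_1$. It also does not use the comparison principle. Its only input is $G\in C^2(\overline{\C_1})$, which you should state explicitly; it is immediate if $g$ is the restriction of a smooth function. The paper's choice gives a base function that solves an extremal equation and attains $g$ exactly, and it is reused in Lemma \ref{lm:Local_barrier}. Your $G$ would also work there, since its defects in the interior equation and in the transmission condition are bounded and can be absorbed by $Mw$.
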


\begin{proof}
Let $\underline{\psi} \in C(\overline{\mathcal{C}_1})$ be the unique viscosity solution to 
\begin{equation} \label{eq:psi_under}
\begin{cases}
\lambda \partial_t \underline{\psi} - \mathcal{M}_K^-(D^2 \underline{\psi}) = 0 & \text{in } \mathcal{C}_1, \\
\underline{\psi} = g  & \text{on } \partial_p \mathcal{C}_1.
\end{cases}
\end{equation}
The existence, uniqueness and interior $C^{2,\alpha}$ regularity of $\underline{\psi}$ and the Lipschitz continuity both in space and time up to the boundary follow from the results in \cite{wang1992regularityI,wang1992regularityII,lian2026boundary}.  Hence, the derivatives $\partial_t \underline{\psi}$ and $\nabla \underline{\psi}$ are bounded on $\mathcal{C}_1'$ by a constant $C_0 > 0$. 

Define $\underline{u} \in C(\overline{\mathcal{C}_1})$ by setting
\[
\underline{u}(x,t) := \underline{\psi}(x,t) + C_1 (|x_n| - 1),
\]
where $C_1> 0$ is a constant to be chosen depending on $C_0$.
Since $C_1(|x_n|-1) \le 0$ in $\mathcal{C}_1$, we have $\underline{u} \le \underline{\psi} = g$ on $\partial_p \mathcal{C}_1$.
Notice that $\underline{u} \in C^2(\C_1^\pm)$ and thus we can verify directly the subsolution condition in $\mathcal{C}_1^\pm$. Indeed we have
\[
\lambda \partial_t \underline{u} - \operatorname{tr}(A^\pm(t) D^2 \underline{u}) = \lambda \partial_t \underline{\psi} - \operatorname{tr}(A^\pm(t) D^2 \underline{\psi})\leq 0  \quad \text{ in }\C_1^\pm, 
\] 
since $\underline{\psi}$ solves \eqref{eq:psi_under}.
 Thus, $\underline{u}$ is a classical subsolution in $\mathcal{C}_1^\pm$.
We now check that $\underline{u}$ satisfies the subsolution condition on $\mathcal{C}_1'$. From the expression of $\underline{u}$ we infer 
\[
\partial_t \underline{u} = \partial_t \underline{\psi}, \quad \nabla' \underline{u} = \nabla' \underline{\psi}, \quad \text{and} \quad \partial_n \underline{u}^\pm = \partial_n \underline{\psi} \pm C_1 \quad \text{on } \mathcal{C}_1'.
\]
Thus the transmission operator on $\C_1'$ yields
\begin{align*}
    &\partial_t \underline{u} + \hat{\gamma}(t) \cdot \nabla' \underline{u} - \gamma^+(t) \partial_n \underline{u}^+ + \gamma^-(t) \partial_n \underline{u}^- \\
&= \partial_t \underline{\psi} + \hat{\gamma}(t) \cdot \nabla' \underline{\psi} - (\gamma^+(t) - \gamma^-(t))\partial_n \underline{\psi} - C_1(\gamma^+(t) + \gamma^-(t))
\\ &\leq C_0(1 + 3K) - C_1(\gamma^+(t) + \gamma^-(t)),
\end{align*}
where $C_0$ is the universal bound on the derivatives of $\underline{\psi}$, and $\hat \gamma$, $\gamma^+$ and $\gamma^-$ are bounded by $K$.
Moreover using that
  $\gamma^\pm(t) \ge K^{-1}$, we can choose $C_1$ sufficiently large such that
\[
\partial_t \underline{u} + \hat{\gamma}(t) \cdot \nabla' \underline{u} - \gamma^+(t) \partial_n \underline{u}^+ + \gamma^-(t) \partial_n \underline{u}^-\le C_0(1 + 3K) - 2C_1K^{-1} \le 0 \quad \text{on } \mathcal{C}_1'.
\]
 This shows that $\underline{u}$ is a viscosity subsolution on $\mathcal{C}_1'$.
The supersolution case can be handled in a similar way defining $\overline{u}$ as
$$
\overline{u}(x,t) := \overline{\psi}(x,t) - C_1 (|x_n| - 1)  \in C^2(\C_1^\pm) \cap C(\overline{\mathcal{C}_1}).
$$ 
In this case, $\overline{\psi}$ is the unique solution of the problem $\lambda \partial_t \overline{\psi} - \mathcal{M}_K^+(D^2 \overline{\psi}) = 0$ with boundary datum
$g$ on $\p_p \C_1$. In this way $ g \leq \overline{u} $ on $\p_p \C_1$ and as a consequence of the Comparison Principle (Theorem \ref{Thm:CP}) we deduce $\underline{u}\leq \overline{u}$ in $\C_1$.

\end{proof}

Given a smooth function $g$ on $\p_p \C_1$, we define the Perron class associated with $g$ as
\begin{equation} \label{def:S_g}
    \mathcal{S}_g := \{ v \in USC(\overline{\mathcal{C}_1}) \mid v \text{ is a viscosity subsolution to \eqref{eq:main1} and } v \le g \text{ on } \partial_p \mathcal{C}_1 \}.
\end{equation}
By Lemma \ref{lem:barriers_existence}, $\underline{u} \in \mathcal{S}_g$, so $\mathcal{S}_g \neq \emptyset$. Furthermore, by the Comparison Principle (Theorem \ref{Thm:CP}), every $v \in \mathcal{S}_g$ satisfies $v \le \overline{u}$ in $\overline{\mathcal{C}_1}$.

 We can therefore define the Perron solution to \eqref{eq:main1} with boundary datum $g$  as
\[
u(x,t) := \sup_{v \in \mathcal{S}_g} v(x,t) \quad \text{for } (x,t) \in \overline{\mathcal{C}_1},
\]
and its upper and lower semicontinuous envelopes as
\[
u^*(x,t) := \limsup_{(y,s) \to (x,t)} u(y,s), \quad u_*(x,t) := \liminf_{(y,s) \to (x,t)} u(y,s).
\]
By construction, $\underline{u} \le u_* \le u \le u^* \le \overline{u}$ in $\overline{\mathcal{C}_1}$. 

In the following lemma, by constructing suitable local barriers, we show that $u_* = u^* = g$ on $\partial_p \mathcal{C}_1$.

\begin{Lemma} \label{lm:Local_barrier}
    Let $g$ be a smooth function on $\p_p \C_1$. For any $(x_0,t_0) \in \p_p \C_1$ there exists a viscosity subsolution $\underline{h}$ and a viscosity supersolution $\overline{h}$  to \eqref{eq:main1} such that $\underline{h}\leq g \leq \overline{h}$ on $\p_p \C_1$ and such that $\underline{h}(x_0,t_0)=g(x_0,t_0)=\overline{h}(x_0,t_0)$.
\end{Lemma}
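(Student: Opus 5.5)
We would build the local barriers by adapting the global ones of Lemma \ref{lem:barriers_existence}, replacing the solutions of the Pucci problems by functions that already touch $g$ at $(x_0,t_0)$. We treat $\underline{h}$; $\overline{h}$ is symmetric, with $\mathcal{M}_K^+$ replacing $\mathcal{M}_K^-$ and the signs reversed. Since $g$ is smooth, we first extend it to a smooth function $G$ on $\overline{\C_1}$. Then we distinguish two cases according to whether $(x_0,t_0)$ lies away from the interface plane $\{x_n=0\}$ or on $\p_p\C_1\cap\{x_n=0\}$, i.e. on the lateral faces $|x_i|=1$ ($i<n$) at $x_n=0$, or on the bottom $\{t=-1\}$.

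\emph{Step 1 (reduction to the global construction).} Our first attempt is to reuse Lemma \ref{lem:barriers_existence} directly. Let $\underline{\psi}$ solve \eqref{eq:psi_under} with datum $g$, and set $\underline u=\underline\psi+C_1(|x_n|-1)$. This equals $g$ only where $|x_n|=1$, so the correction term must be localized. We therefore replace $C_1(|x_n|-1)$ by
\[
C_1\big(|x_n| - \rho(x,t)\big),
\]
where $\rho\ge |x_n|$ is a nonnegative smooth function that vanishes at $(x_0,t_0)$ and satisfies $\rho\ge|x_n|$ on $\p_p\C_1$. The natural choice is $\rho = |x_n|+ M\big(|x-x_0|^2+(t-t_0)^2\big)$ near the point; this gives a correction of $-C_1M(|x-x_0|^2+(t-t_0)^2)$.

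\emph{Step 2 (checking the conditions).} This correction is smooth, so in $\C_1^\pm$ it contributes $\lambda\p_t$ and Hessian terms of size $C_1M$. These we absorb by solving instead $\lambda\p_t\underline\psi-\mathcal M_K^-(D^2\underline\psi)=-C$ for a large $C$, which is still possible with the regularity from \cite{wang1992regularityI,wang1992regularityII,lian2026boundary}. Lowering the right-hand side keeps $\underline\psi=g$ on $\p_p\C_1$. On $\C_1'$ the kink $C_1|x_n|$ still gives the $-C_1(\gamma^++\gamma^-)\le -2C_1K^{-1}$ gain in the transmission operator. This gain dominates the bounded tangential, time and normal contributions of $\underline\psi$ and of the quadratic term, so $\underline h\le g$ on $\p_p\C_1$ with equality at $(x_0,t_0)$.

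\emph{Main obstacle.} The difficulty is the case where $(x_0,t_0)$ lies on $\{x_n=0\}$. There the kink $|x_n|$ vanishes, so the gain of $-2C_1K^{-1}$ in the transmission condition is still present, but we need $\underline\psi$ to have derivatives bounded up to $\p_p\C_1$ near the interface corner. This Lipschitz-up-to-the-boundary bound is exactly what Lemma \ref{lem:barriers_existence} already uses, and we invoke it again. A second subtlety is the bottom $t=-1$, where only parabolic (not hyperbolic) boundary regularity is available. If the boundary Lipschitz bound fails there, we would instead take the explicit barrier
\[
\underline h = g(x_0,t_0) - \eps - C_\eps\big(|x-x_0|^2 - C|x_n| + (t-t_0)\big),
\]
taking the infimum over $\eps$ and using that the supremum of subsolutions is a subsolution. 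This yields $u_*=g$ at the point in the limit, which is all that is needed for the Perron argument.
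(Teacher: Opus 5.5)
\textbf{The construction fails on the interface, and the idea needed at the corner points is missing.}

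With your choice $\rho=|x_n|+M(|x-x_0|^2+(t-t_0)^2)$, the correction becomes $C_1(|x_n|-\rho)=-C_1M(|x-x_0|^2+(t-t_0)^2)$. This is smooth across $\{x_n=0\}$: the kink cancels, so the gain $-C_1(\gamma^++\gamma^-)$ you invoke in Step 2 is not there. On $\C_1'$ the transmission operator of $\underline{h}$ is then $\p_t\underline{\psi}+\hat\gamma\cdot\nabla'\underline{\psi}-(\gamma^+-\gamma^-)\p_n\underline{\psi}$, plus terms of size $C_1M$ with no sign. Lowering the Pucci right-hand side to $-C$ only helps in $\C_1^\pm$, not on $\C_1'$. (Also, $\rho\ge|x_n|$ with $\rho(x_0,t_0)=0$ is impossible when $(x_0)_n\neq0$.)

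At the points you single out, $(x_0)_n=0$ with $t_0>-1$, where a lateral face meets the interface, a better $\rho$ cannot fix this. On that face $\rho\ge|x_n|$ and $\rho$ vanishes at $x_0$, so $\partial_n\rho^{+}(x_0)\ge1$ and $\partial_n\rho^{-}(x_0)\le-1$. The net kink of $C_1(|x_n|-\rho)$ therefore degenerates as one approaches $x_0$, exactly where the gain is needed.

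The real obstruction is that Lipschitz barriers fail at this corner. The boundary Lipschitz bound for $\underline{\psi}$, which you name as the main obstacle, is not the issue. Take polar coordinates $(\varrho,\theta)$ in the $(x_1,x_n)$-plane centered at the corner, with the face at $\theta=\pm\pi/2$ and the interface at $\theta=0$.
\begin{itemize}
\item A time-independent supersolution $\varrho f(\theta)$ has Hessian $\varrho^{-1}(f''+f)\,e_\theta\otimes e_\theta$, so the interior inequality forces $f''+f\le 0$ on each side.
\item Sturm comparison then gives $f(\pi/2)\le f'(0^+)$ and $f(-\pi/2)\le -f'(0^-)$.
\item When $\hat\gamma\equiv0$, the interface condition requires a concave kink, $f'(0^+)<0$ or $f'(0^-)>0$. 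Either one makes $f$ negative on the face, so nonnegativity on the face is impossible.
\end{itemize}

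The missing idea is the paper's H\"older barrier at the corner, after Lieberman and Miller as in Erneta: $w=\varrho^\beta v(\theta)+c_0|x''-x_0''|^2+(t-t_0)^2$ with $v(\theta)=2-e^{2K^2(|\theta|-\pi/2)}$ and $\beta<1$ small. Its jump of normal derivatives on $\theta=0$ is of order $\varrho^{\beta-1}$. Because $\beta$ is small compared with $|v'(0^\pm)|$, this jump dominates both the tangential term $\hat\gamma\cdot\nabla' w$ (of order $\beta\varrho^{\beta-1}$) and the bounded transmission operator of $\underline{\psi}$. Then $\underline{\psi}-Mw$ is a strict subsolution in a small neighborhood $\mathcal N_r(x_0,t_0)$ and lies below $\underline{u}$ on $\p\mathcal N_r\cap\C_1$. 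The barrier is $\underline{h}=\max\{\underline{u},\underline{\psi}-Mw\}$ there and $\underline{h}=\underline{u}$ elsewhere.

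This max-gluing localization is the second ingredient you lack. For $(x_0)_n\neq0$ it lets the neighborhood avoid $\C_1'$ altogether, whereas your global quadratic correction breaks the interface condition even in that easy case.

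Finally, your fallback at $t_0=-1$ has a sign error: $-C_\eps(-C|x_n|)=+C_\eps C|x_n|$ pushes $\underline{h}$ above $g$ on the bottom face near $x_0$ once $C_\eps$ is large. At the bottom no kink is needed. A large linear term in $t+1$, as in the paper's $w=C(t+1)+|x-x_0|^2$, suffices.
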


\begin{proof}
We first focus on showing the existence of $\underline{h}$.
    Let $(x_0,t_0) \in \p_p \C_1$. Let $\underline{\psi}$ be the function given by \eqref{eq:psi_under}, which satisfies $\underline{\psi}(x_0,t_0) = g(x_0,t_0)$. We construct a local subsolution of the form
    $$ \underline{\tilde{h}}(x,t) := \underline{\psi}(x,t) - M w(x,t), $$
    where $M > 0$ is a large constant and $w(x,t) \ge 0$ is a supersolution to \eqref{eq:main1} in the set
    \[
    \mathcal{N}_r(x_0, t_0) := (x_0 + Q_r) \times (t_0 - r^2, t_0 + r^2) \cap \C_1,
    \]
    for some small $r$, such that $w(x_0, t_0) = 0$ and $w > 0$ on $\partial \mathcal{N}_r(x_0, t_0) \cap \C_1$. 
    For the construction of $w$ we distinguish three different cases.

   \underline{\textit{Case 1: $(x_0)_n \neq 0$ and $t_0>-1$.}} For $r$ small, $\mathcal{N}_r(x_0, t_0)$ does not intersect $\C_1'$. Let $\nu$ be the inward unit normal to the face of the cube $Q_1$ containing $x_0$. We define the barrier
        $$ w(x,t) := 1 - e^{-C(x-x_0)\cdot \nu} + |x-x_0|^2 + (t - t_0)^2. $$
        Since $x \in \overline{Q_1}$, we have $(x-x_0)\cdot \nu \ge 0$, which ensures $w \ge 0$. The strict inequality $w > 0$ for $(x,t) \neq (x_0, t_0)$ is guaranteed by the positive quadratic terms.
        For the supersolution condition in $\mathcal{N}_{r}(x_0,t_0)$ we have
        $$ \lambda \p_t w - \Mp(D^2 w) = 2\lambda(t-t_0) +  K^{-1} C^2 e^{-C(x-x_0)\cdot \nu} - 2nK \geq 0, $$
       which holds by taking $C > 0$ sufficiently large and $r$ small enough so that the term $2\lambda(t-t_0)$ is negligible.
 
    \underline{\textit{Case 2: $(x_0)_n = 0$ and $t_0 >-1$. }} 
    In this case we borrow a spatial barrier constructed in \cite{erneta2026existence}.
     Up to translations and rotations, we can assume the exterior spatial normal at $x_0$ is $-e_1$.
    We split the spatial coordinates as $x = (x_1, x'', x_n) \in \R \times \R^{n-2} \times \R$ and introduce the polar coordinates $(\rho, \theta)$ in the $(x_1, x_n)$-plane centered  at $((x_0)_1, 0)$. That is, $x_1 - (x_0)_1 = \rho \cos \theta$ and $x_n = \rho \sin \theta$  with $\theta \in [-\pi/2, \pi/2]$.
    Following the construction in \cite[Lemma 5.2]{erneta2026existence}, based on earlier works of Lieberman \cite{lieberman1986mixed} and Miller \cite{Miller}, for points in $\mathcal{N}_r(x_0,t_0)$ we define 
    $$ w(\rho,\theta,x'',t) := \rho^\beta v(\theta) + c_0|x'' - x''_0|^2 + (t - t_0)^2, $$
    where $v(\theta) = 2 - e^{2K^2(|\theta| - \frac{\pi}{2})} \in (1,2)$, $\beta=(e^{K^2 \pi}-\frac{1}{2})^{-1}\in (0,1)$, and $c_0=\frac{2^\beta}{n}e^{-K^2\pi}$. This choice makes $w(x_0,t_0)=0$ and $w(x,t)>0$ for $(x,t) \neq (x_0,t_0)$.
 By computing explicitly the interior equations we get
 \begin{align*}
      \lambda \p_t w - \tr(A^\pm(t)D^2 w ) &\geq  2\lambda(t-t_0) - \Mp(D^2 w ) \\ &\geq 2\lambda(t-t_0) + 2 \rho^{\beta-2}Ke^{-K^2\pi} -c_0 \Mp(D^2 (|x''-x_0''|^2))  \\ 
      & \geq 2\lambda(t-t_0) + 2 \rho^{\beta-2}Ke^{-K^2\pi} - 2K(n-2)c_0 \geq \rho^{\beta-2}Ke^{-K^2\pi} > 0
      \end{align*}
      for $\rho <r$ and $r$ chosen small enough (since $\beta < 1$, the term $\rho^{\beta-2}$ dominates the small time derivative). On the interface $\theta = 0$, evaluating the transmission operator we obtain 
      \begin{align*}
          &\partial_t w + \hat{\gamma}(t) \cdot \nabla' w- \gamma^+(t) \partial_n w^+ + \gamma^-(t) \partial_n w^-  \\
          & \geq  2(t-t_0) - \hat\gamma(t) \cdot \left( \beta(2-e^{-K^2\pi})\rho^{\beta-1},2c_0(x''-x_0'')\right) + (\gamma^+(t) + \gamma^-(t)) 2K^2e^{-K^2\pi} \rho^{\beta-1}\\
          &\geq -2r^2 -2Ke^{-K^2\pi} \rho^{\beta-1} - 2 K c_0 r + 4K e^{-K^2\pi} \rho^{\beta-1} \geq Ke^{-K^2\pi} \rho^{\beta-1} >0
      \end{align*}
  where the term $\rho^{\beta-1}$ absorbs the terms depending on $r$ since $\beta <1$. Hence, $w$ is a strict supersolution in $\mathcal{N}_r(x_0,t_0)$.
  
      \underline{\textit{Case 3: $t_0 =-1$. }} 
      Here, the initial point lies at the bottom of the cylinder, so we only consider the forward-in-time neighborhood $\mathcal{V}_r(x_0, -1) := (x_0 + Q_r) \times [-1, -1+r^2)$. In this case, it is enough to set
    $$ w(x,t) := C(t+1) +|x - x_0|^2. $$
     Since $\partial_t w = C> 0$, taking $C$ large guarantees that $w$ is a strict supersolution in $\mathcal{V}_r(x_0, -1) \cap \C_1$ and $w(x_0,-1)=0$.

\medskip
Having constructed the barriers in all cases, we now proceed to define the local subsolution. Let $\mathcal{W}_r(x_0,t_0)$ denote $\mathcal{N}_r(x_0, t_0)$ if $t_0 > -1$, and $\mathcal{V}_r(x_0, -1)$ if $t_0 = -1$.
For  $M > 0$ large enough, the function $\underline{\tilde{h}} := \underline{\psi} - Mw$ is a strict subsolution in $\mathcal{W}_r(x_0,t_0) \cap \mathcal{C}_1$. Furthermore, since $w(x,t) > 0$ for $(x,t)\neq (x_0,t_0)$, we can choose $M$ sufficiently large to ensure that $\underline{\tilde{h}} \le \underline{u}$  on $\partial \mathcal{W}_r(x_0,t_0) \cap \mathcal{C}_1$.
Thus defining
$$
\underline{h}(x,t) := \begin{cases} \max\{\underline{u}(x,t), \underline{\tilde{h}}(x,t)\} & \text{in } \overline{\mathcal{W}_r}(x_0,t_0) \cap \mathcal{C}_1, \\ \underline{u}(x,t) & \text{in } \overline{\mathcal{C}_1} \setminus \mathcal{W}_r(x_0,t_0), \end{cases}
$$
yields a continuous viscosity subsolution $\underline{h}$ with $\underline{h}\leq g$ on $\partial_p \mathcal{C}_1$ and such that $\underline{h}(x_0,t_0)=g(x_0,t_0)$.

In order to construct the upper barrier $\overline{h}$, we consider $\overline{\psi}$ to be the supersolution from Lemma \ref{lem:barriers_existence} satisfying $\overline{\psi}(x_0,t_0) = g(x_0,t_0)$. In this case, $\overline{\tilde{h}} = \overline{\psi} + Mw$ is a strict supersolution in $\mathcal{W}_r(x_0,t_0) \cap \mathcal{C}_1$, where $w \ge 0$ is the exact same function constructed above. For $M>0$ large enough, we have $\overline{\tilde{h}} \ge \overline{u}$ on $\partial \mathcal{W}_r(x_0,t_0) \cap \mathcal{C}_1$. Thus, defining
$$
\overline{h}(x,t) := \begin{cases} \min\{\overline{u}(x,t), \overline{\tilde{h}}(x,t)\} & \text{in } \overline{\mathcal{W}_r}(x_0,t_0) \cap \mathcal{C}_1, \\ \overline{u}(x,t) & \text{in } \overline{\mathcal{C}_1} \setminus \mathcal{W}_r(x_0,t_0), \end{cases}
$$
gives a viscosity supersolution in $\mathcal{C}_1$ satisfying $\overline{h} \ge g$ on $\partial_p \mathcal{C}_1$ and $\overline{h}(x_0,t_0) = g(x_0,t_0)$. This concludes the proof.

\end{proof}

As an immediate consequence of Lemma \ref{lm:Local_barrier}, for any $(x_0,t_0)$, the function $\underline{h} \in \mathcal{S}_g$ such that $\underline{h}(x_0,t_0) = g(x_0,t_0)$. By definition of $u$ as the supremum over $\mathcal{S}_g$, we have $u \ge \underline{h}$ in $\overline{\C_1}$. 
Conversely, considering $\overline{h}$ given by Lemma \ref{lm:Local_barrier} from the Comparison Principle (Theorem \ref{Thm:CP}), we get that any $v \in \mathcal{S}_g$ satisfies $v \le \overline{h}$ in $\overline{\C_1}$, which implies $u \le \overline{h}$. 
Taking the liminf and limsup as $(x,t) \to (x_0, t_0)$, the continuity of the barriers yields
\[
g(x_0, t_0) = \underline{h}(x_0, t_0) \le u_*(x_0,t_0) \le u^*(x_0,t_0) \le \overline{h}(x_0, t_0) = g(x_0, t_0).
\]
Hence, it holds $u_* = u^* = g$ on $\p_p \C_1$.

Now we provide the proof of Theorem \ref{thm:existence}.

\begin{customproof}{Theorem \ref{thm:existence}} ~

\underline{\textit{Step 1: Existence for smooth boundary data.}}
We begin by assuming that $g$ is a smooth function on $\p_p \C_1$ and we consider the Perron class $\mathcal{S}_g$ associated with $g$, as defined in \eqref{def:S_g}.

By standard viscosity theory (see  \cite[Lemma 4.18]{jesus2026fully}),  $u^*$ is a viscosity subsolution to \eqref{eq:main1} in $\mathcal{C}_1$. 

We now prove that $u_*$ is a viscosity supersolution to \eqref{eq:main1} in $\mathcal{C}_1$. 
Assume by contradiction that there exists $\varphi$ defined in $\C_1$ which touches $u_*$ strictly from below at $(x_0,t_0)$ in $\C_r(x_0,t_0)$ but
\begin{equation*}
 \begin{aligned}
     & \lambda \p_t \varphi(x_0,t_0) - \tr(A^{\pm}(t_0) D^2 \varphi(x_0,t_0)) < 0, & \quad \text{if }(x_0,t_0) \in \C_{1}^\pm,  \\
        &\p_t \varphi(x_0,t_0) < -\hat{\gamma}(t_0) \cdot  \nabla' \varphi(x_0,t_0)  + \gamma^+( t_0) \p_n \varphi^+(x_0,t_0) - \gamma^-(t_0) \p_n \varphi^-(x_0,t_0),  & \quad \text{if }(x_0,t_0) \in \C_{1}'.
 \end{aligned}
\end{equation*}

If $(x_0,t_0)\in \C_1'$, we can replace $\varphi$ with $\varphi-\eta|x_n| + C x_n^2$, with $\eta$ small and $C$ large, as in the proof of Proposition \ref{prop:diff_sol}, and assume without loss of generality that $\varphi$
is also a strict subsolution in $\mathcal{C}_r^\pm(x_0,t_0)$ while preserving the strict inequality on the interface.

We now define the perturbed test function
\[
\varphi_{\delta,\theta}(x,t) := \varphi(x,t) + \delta - \theta \left( |x - x_0|^2 + (t - t_0)^2 \right),
\]
where $\theta, \delta > 0$ are small constants. At $(x_0,t_0)$, the first derivatives of $\varphi$ coincide with the ones of $\varphi_{\delta,\theta}$.  
Choosing $\delta < \theta r^2/2$, we obtain that $u_*-\varphi_{\delta,\theta}>c$ on $\partial_p \C_r(x_0,t_0)$, for some small $c>0$. Hence, by lower semi-continuity of $u_*-\varphi_{\delta,\theta}$, there exists a small $\varepsilon>0$ such that $u_*-\varphi_{\delta,\theta}>c/2$ on $\partial_p \C_r(x_0,t_0+\varepsilon)$. By continuity, $\varphi_{\delta,\theta}$ remains a strict subsolution in a small neighborhood $\C_r(x_0,t_0+\eps)$, by choosing $\theta > 0$ and $\eps$  sufficiently small.

Thus, the function
\[
w(x,t) := 
\begin{cases}
\max\{u^*(x,t), \varphi_{\delta,\theta}(x,t)\} & \text{in } \overline{\C_r}(x_0,t_0+\eps), \\
u^*(x,t) & \text{in } \overline{\mathcal{C}_1} \setminus \C_r(x_0,t_0+\eps)
\end{cases}
\]
is upper semicontinuous by construction. 
Because $u^*$ is a viscosity subsolution, $w$ is also a viscosity subsolution and $w\leq g$ on $\p_p \C_1$ and thus belongs to the Perron class $\mathcal{S}_g$. As $u$ is the supremum over $\mathcal{S}_g$, we must have $w \le u$.

However, since $u_*$ is the lower semicontinuous envelope of $u$, there exists a sequence $(x_k, t_k) \to (x_0,t_0)$ such that $u(x_k, t_k) \to u_*(x_0,t_0)$. Evaluating the inequality $w \le u$ along this sequence yields
\[
\varphi_{\delta,\theta}(x_k, t_k) \le \max\{u^*(x_k, t_k), \varphi_{\delta,\theta}(x_k, t_k)\} = w(x_k, t_k) \le u(x_k, t_k).
\]
Passing to the limit as $k \to \infty$, since $\varphi_{\delta,\theta}$ is continuous, we get
\[
\varphi(x_0,t_0) + \delta = \varphi_{\delta,\theta}(x_0, t_0) \le u_*(x_0, t_0).
\]
This creates a contradiction with $\varphi(x_0,t_0) = u_*(x_0,t_0)$.

Hence, $u_*$ is a viscosity supersolution to \eqref{eq:main1} in $\C_1$.
 By Theorem \ref{Thm:CP}, we obtain $u^* \le u_*$ in $\overline{\mathcal{C}_1}$. Since $u_* \le u \le u^*$ holds everywhere, this implies that $u = u^* = u_*$ in $\overline{\mathcal{C}_1}$. 
This proves the existence of a unique viscosity solution $u \in C(\overline{\mathcal{C}_1})$ with smooth boundary datum $g$.

\underline{\textit{Step 2: Existence for continuous boundary data.}}
Now, let $g \in C(\partial_p \mathcal{C}_1)$ be a continuous boundary datum. We can find a sequence of smooth functions $\{g_k\}_{k\geq 0}$ on $\partial_p \mathcal{C}_1$ such that $g_k \to g$ uniformly on $\partial_p \mathcal{C}_1$. 
By the proof above, for each $k$, there exists a unique continuous viscosity solution $u_k \in C(\overline{\mathcal{C}_1})$ satisfying $u_k = g_k$ on $\partial_p \mathcal{C}_1$.

Let $M = \|g_k - g_m\|_{L^\infty(\partial_p \mathcal{C}_1)}$. Notice that $v = u_m + M$ is a viscosity solution to \eqref{eq:main1}. 
On the parabolic boundary $\partial_p \mathcal{C}_1$, we have
\[
u_k = g_k \le g_m + M = u_m + M = v.
\]
By Theorem \ref{Thm:CP}, it follows that $u_k \le u_m + M$ in $\overline{\mathcal{C}_1}$. Reversing the roles of $k$ and $m$, we obtain $u_m \le u_k + M$. Thus,
\[
\|u_k - u_m\|_{L^\infty(\overline{\mathcal{C}_1})} \le \|g_k - g_m\|_{L^\infty(\partial_p \mathcal{C}_1)}.
\]
Since $\{g_k\}_{k \geq 0}$ is a Cauchy sequence with respect to the uniform norm on the boundary, the above inequality implies that $\{u_k\}_{k \geq 0}$ is a Cauchy sequence in $L^\infty(\overline{\mathcal{C}_1})$. Consequently, $u_k$ converges uniformly to a continuous function $u \in C(\overline{\mathcal{C}_1})$. By stability properties of viscosity solutions under uniform limits, see, for instance, \cite[Proposition 2.9]{CC}, $u$ is a viscosity solution to \eqref{eq:main1}. Finally, since $u_k = g_k$ on $\partial_p \mathcal{C}_1$, passing to the uniform limit yields $u = g$ on $\partial_p \mathcal{C}_1$, completing the proof.
\end{customproof}

\section{Proof of Theorem \ref{Thm:C1gamma}}\label{sec:C1gamma}

In this section, we investigate the regularity of viscosity solutions to \eqref{eq:main1} up to and across the interface $\C_1'$. We present hereafter the proof of Theorem \ref{Thm:C1gamma}.

The proof proceeds in two stages. First, the H\"older estimate from Proposition \ref{Prop:holder} gives enough tangential regularity to reduce the problem, after freezing \(x'\), to a one-dimensional transmission problem with a mildly singular right-hand side. The one-dimensional estimate yields \(C^{1,\alpha}\) expansions from both sides of the interface. In the second stage, the improved regularity along the normal direction allows us to bootstrap \(C^{2,\alpha}\) estimates for the
one-dimensional problem and hence to classical
regularity.

\begin{customproof}{Theorem \ref{Thm:C1gamma}} 
The existence and uniqueness of a viscosity solution $u$ with boundary datum $g \in C(\p_p \C_1)$ is ensured by Corollary \ref{cor:uniqueness} and Theorem \ref{thm:existence}. 

In the following we show that $u$ is a classical solution to \eqref{eq:main1} and satisfies \eqref{eq:C11} and \eqref{eq:transmission_1}.

Let $(y,s) \in \C_{1/2}^{\pm}$ and let $r=|y_n|$. From Proposition \ref{Prop:holder} we deduce that
$$
\osc_{\mathcal{B}_{\lambda,r}(y,s)} u \leq C r^{{ \beta}},$$
for some ${\beta}>0$. The balls $\mathcal{B}_{\lambda,r}$ were defined in \eqref{eq:def_ballBrl}. Consider the following rescaling of $u$
$$
\tilde u(x,t):=u(y+rx,s+r^2\lambda t),
$$
that satisfies
$$
\p_t \tilde{u} = \tr \left( \overline{A}(t) D^2 \tilde{u} \right) \quad \text{ in } \C_1,
$$
where $\overline{A}(t)=A^\pm(s+r^2\lambda t)$, and the sign $\pm$ is chosen according to the phase containing the point $(y,s)$. From the assumptions on $A^{\pm}$ we know $|\p_t \overline{A}(t)| \leq C$ and thus applying interior parabolic estimates to $\tilde u$, we get
\begin{equation} \label{eq:int_est_on_u}     \begin{aligned}     \vert{}\p_n u(y,s)\vert{} &= r^{-1}\vert{}\p_n \tilde{u}(0,0)\vert{} \leq Cr^{-1}\osc_{\C_1} \tilde u \\     &= Cr^{-1}\osc_{\mathcal{B}_{\lambda,r}(y,s)} u \leq Cr^{{ \beta}-1}.     
\end{aligned} \end{equation}

Since the coefficients $A^\pm(t)$, $\gamma^\pm(t)$, and $\hat{\gamma}(t)$ depend only on the time variable, both the interior equation and the transmission condition in \eqref{eq:main1} are invariant under spatial translations in the tangential directions $x'$. 
As a consequence, using the linearity of the problem, for any tangential vector $e=(\hat e,0) \in \R^{n-1}\times \R$ and any $h>0$, the difference quotient 
$$ D_h^e u(x,t) := \frac{u(x+he, t) - u(x,t)}{h^\beta} $$
is still a viscosity solution to \eqref{eq:main1} by Proposition \ref{prop:diff_sol}. 
Applying Proposition \ref{Prop:holder} to $D_h^e u$, with estimates independent on $h$, and iterating finitely many times on nested cylinders, we may pass to the limit as $h \to 0$, as in \cite[Corollary 5.7]{CC}. This yields the corresponding estimates for the higher order partial derivatives with respect to $x'$. Using \eqref{eq:int_est_on_u}, for $\alpha \leq \beta$ we find 
\begin{align} \label{eq:mixed_bounds}
    |D_{x'}^2 u| \leq C, \quad |\nabla' \p_n u| \leq C|x_n|^{{ \alpha}-1} \quad \text{in }\C_{1/2}^{\pm}.
\end{align}

We now reduce our problem to a one-dimensional problem in space freezing the $x'$ variable. For simplicity we can take $x'=0$ and, looking at the problem in the $x_n,t$ variables solved by the function $w(x_n,t):=u(0,x_n,t)$, we find
\begin{equation} \label{eq:1d_transmission}
    \begin{cases}
        \lambda \p_t w = q^{\pm}(t) \p_{nn}^2w + \tilde{h}^{\pm}(x_n,t) \quad &\text{ in }\C_{1}^{\pm} \left(\subset \mathbb{R}^2 \right), \\
        \p_t w=\gamma^{+}( t) \p_{n}w^+ - \gamma^{-}( t) \p_n w^-  + \xi(t)  \quad &\text{ on }\C_{1}' \left( := \{x_n=0 \} \times (-1,0] \right) , \\
    \end{cases}
\end{equation}
with
\begin{equation}
    \begin{aligned} &q^{\pm}(t):=A^\pm_{nn}(t), \quad K^{-1} \leq q^{\pm}(t),\gamma^\pm(t) \leq K, \quad  |\p_t q^{\pm}(t)| \leq \lambda^{-1}  \\
        &\tilde{h}^\pm(x_n,t):= \sum_{(i,j) \neq (n,n)} A_{ij}^{\pm}(t) \p_{ij}^2 u(0,x_n,t), \quad |\tilde{h}^\pm(x_n,t)| \leq K |x_n|^{{ \alpha}-1}, \\
        &\xi(t):= - \hat\gamma(t)\cdot \nabla'u(0,0,t), \quad |\xi(t)| \leq CK,
    \end{aligned}
\end{equation}
 where the bounds follow from assumptions \eqref{Assump1}.

The transmission condition in \eqref{eq:1d_transmission} is satisfied in the viscosity sense. 
Indeed, if we consider $\varphi=\varphi(x_n,t) \in C_{x_n}^{1}(\overline{ \mathcal{B}_\delta^+(0,0)}) \cap  C_t^{1}(\overline{ \mathcal{B}_\delta^+(0,0)})$ and $\varphi \in C_{x_n}^{1}(\overline{ \mathcal{B}_\delta^-(0,0)}) \cap  C_t^{1}(\overline{ \mathcal{B}_\delta^-(0,0)})$  touching $w$ from above (resp. from below) at $(0,0) \in \R^2$, then the function
    \[
\varphi(x_n,t)+ \nabla' u(0,0,0) \cdot x' \pm C|(x,t)|^{1+\alpha/2}
    \]
touches $u$ from above (resp. below) at $(0,0,0)$ in $ \mathcal{B}_{\delta} \subset \R^{n+1}$. Here $|(x,t)|$ denotes the Euclidean norm in $\R^{n+1}$.
This is a consequence of the interior parabolic H\"older continuity of $\nabla' u$, which implies, for some $y'$ in the line segment $[0,x']$,
\begin{equation}\label{eq:est_on_x_prime}
\begin{aligned}
     \left| u(x,t) - u(0,x_n,t)-\nabla'u(0,0,0)\cdot x'\right| \,&\leq \left| \nabla' u(y',x_n,t) -\nabla'u(0,0,0) \right||x' | \\
    &\leq C\left(|(y',x_n)|^{\beta} +|t|^{\beta/2}\right)|x'|
    \leq C|(x,t)|^{1+\alpha},
\end{aligned}
\end{equation}
for $\alpha\leq \beta/2$.
We can eliminate the term $\xi(t)$ from the transmission condition by considering $w(x_n,t) - \int_{0}^t \xi(s) \, ds$. For convenience, we then work with the time-rescaled function $v(x_n,t) := w(x_n, \lambda t)$, which solves \eqref{eq:1d_transmission_redu} with $h^\pm(x_n,t) := \tilde{h}^\pm(x_n, \lambda t) - \lambda \xi(\lambda t)$.
  Hence, after relabeling $\lambda t$ with $t$ in the definition of the functions $h^{\pm},q^{\pm}$ and $\gamma^\pm$, we can reduce to consider the following problem   
\begin{equation} \label{eq:1d_transmission_redu}
    \begin{cases}
        \p_t v = q^{\pm}(t) \p_{nn}^2 v + h^{\pm}(x_n,t) \quad &\text{ in } (0,1)^{\pm} \times (-\lambda^{-1},0] , \\
        \p_t v= \lambda \left( \gamma^{+}( t) \p_n v^+ - \gamma^{-}( t) \p_n v^- \right)  \quad &\text{ on } \{ 0 \} \times  (-\lambda^{-1},0] , \\
    \end{cases}
\end{equation}
with the bounds
\begin{equation} \label{eq_bounds_on_h}
        \begin{aligned}
            K^{-1} \leq q^{\pm},\gamma^{\pm} \leq K, \quad |\p_t q^{\pm}| \leq K, \quad |\p_t \gamma^\pm| \leq 1, \quad |h^\pm|\leq K|x_n|^{{ \alpha}-1}.
        \end{aligned}
    \end{equation}
The $C^{1,{ \alpha}}$ estimates for the one-dimensional problem \eqref{eq:1d_transmission_redu} are obtained in Lemma \ref{lm:reg1d} which in terms of $u$ gives
\begin{equation} \label{eq:est_on_w}
    \begin{aligned}
    &\left| u(0,x_n,t) - \left( u(0,0,t) + \p_n u^{+}(0,0,t) (x_n)_+ -  \p_n u^{-}(0,0,t) (x_n)_- \right)\right| \leq C |x_n|^{1+{ \alpha}} \quad &\text{in }\C_{1/2}^{\pm}, \\
     &\left|\p_n u^{\pm}(0,x_n,t)- \p_n u^{\pm}(0,y_n,s)\right| \leq C \left( |x_n-y_n|^{{ \alpha}}+ \lambda^{-{ \alpha}/2} |t-s|^{{ \alpha}/2}\right) \quad &\text{in }\C_{1/2}^{\pm}.
\end{aligned}
\end{equation}

   This together with \eqref{eq:est_on_x_prime} leads to 
   \[
   |u(x,t)-\left( u(0,t) + \p_n u^+(0,t) (x_n)_{+} - \p_n u^-(0,t) (x_n)_{-}  + \nabla'u(0,0) \cdot x'\right)| \leq C r^{1+{ \alpha}} \quad \text{in }\C_{r}^{\pm},
   \]
   which we write in the compact form
   \[
   |u-\ell_{\tilde a^\pm,\tilde b}| \leq C r^{1+{\alpha}} \quad \text{in }    \C_{r}^{\pm}.
   \]
   where $\ell_{\tilde a^\pm, \tilde b}=\tilde a^{\pm}(t) \cdot x + \tilde b(t)$
   with
   \[
   \tilde a^\pm(t)=(\Hat{a},\tilde a_n^\pm(t)):=(\nabla' u(0,0),\p_n u^\pm(0,t) ) ,\quad \tilde b(t):=u(0,t)
   \]
   and the following relation descending from the transmission condition

   \[
 \p_t \tilde b(t)= -\hat{\gamma}(t) \cdot \nabla'u(0,t) + \gamma^+(t) \tilde a_n^+(t) - \gamma^-(t) \tilde a_n^-(t).
   \]

Now we slightly modify the coefficients $\tilde a^\pm(t)=(\Hat{a},\tilde a_n^\pm(t))$ and $\tilde b(t)$ to  $a^\pm(t)=(\Hat{a}, a_n^\pm(t))$ and $b(t)$ so that they satisfy \eqref{eq:transmission_1} and 
    \[
    \| u - \ell_{a^{\pm},b} \|_{L^\infty( \C^{\pm}_r)} \leq C r^{1+{ \alpha}},
    \]
    with $|\hat a| \leq C$, $|\p_t a_n^{\pm}| \leq C r^{{ \alpha}-2} \lambda^{-1}$.
By \eqref{eq:est_on_w} we know that 
\[
|\tilde a_n^\pm(t)-\tilde a_n^\pm(s)|\leq C \lambda^{-\alpha/2}|t-s|^{\alpha/2},
\]
which ensures that within any time interval of length $\lambda r^2$, the variation of $\tilde a_n^\pm$ is at most $Cr^{ \alpha}$.
To smooth out these coefficients uniformly over the entire interval $[-r,0]$, we perform a time-averaging procedure. Let $\eta$ be a smooth mollifier compactly supported in $[-1,1]$, and let $\eta_{\lambda r^2}$ be its standard rescaling. Extending $\tilde a_n^\pm$ constantly for both $t \geq 0$ and $t \leq -r$, we define the regularized coefficients as
    \[
    a_n^\pm(t) := (\tilde a_n^\pm \ast \eta_{\lambda r^2})(t), \quad \text{and we set} \quad \hat a := \nabla'u(0,0).
    \]
Since the mollification window has size $\lambda r^2$, and the variation of $\tilde a_n^\pm$ is uniformly bounded by $C r^\alpha$ on any such window, this guarantees that for all $t \in [-r,0]$ we have both $|\tilde a_n^\pm(t) - a_n^\pm(t)| \leq Cr^{ \alpha}$  and 
\begin{equation} \label{bound_on_an'}
\begin{aligned}
   |\p_t a_n^\pm(t)| &\leq   \sup_{|\tau| \le \lambda r^2} |\tilde a_n^\pm(t-\tau) - \tilde a_n^\pm(t)|  \cdot \int_{-\lambda r^2}^{\lambda r^2} |\p_\tau \eta_{\lambda r^2}(\tau)| \, d\tau \\
   &\leq C (\lambda r^2)^{-1}\sup_{|\tau|\le \lambda r^2} |\tilde{a}_n^\pm(t-\tau) - \tilde{a}_n^\pm(t)|\leq  C \lambda^{-1} r^{\alpha - 2}.
\end{aligned}
\end{equation}

The regularity of $u$ in the $x'$-direction leads to
\begin{align} \label{eq:reg_u_x'}
    |\p_t \tilde b(t) +\hat{\gamma}(t) \cdot \hat{a} - \gamma^+(t) \tilde a_n^+(t) + \gamma^-(t) \tilde a_n^-(t)| \leq |\hat \gamma \cdot (\nabla' u(0,t)- \nabla' u(0,0))|\leq C|t|^{\alpha}.
\end{align}

We now define $b(t)$ for $t\leq 0$ by setting $b(0)=\tilde b(0)$ and 
\[
 \p_t b(t) = -\hat{\gamma}(t) \cdot \hat{a} + \gamma^+(t) a_n^+(t) - \gamma^-(t) a_n^-(t).
\]
Furthermore, combining the uniform bound $|\tilde a_n^\pm(t) - a_n^\pm(t)| \leq Cr^{ \alpha}$ valid for $t \in [-r,0]$ and \eqref{eq:reg_u_x'}, we deduce 
that $|\p_t (\tilde b - b)(t)|\leq Cr^\alpha$ for all $t \in [-r, 0]$.
By integrating this inequality over the entire interval $t \in [-r, 0]$, we easily deduce $|b(t) - \tilde b(t)| \leq Cr^{1+{ \alpha}}$. Replacing $\ell_{\tilde a^\pm, \tilde b}$ with the updated affine function $\ell_{a^\pm, b}$ absorbs these differences, yielding the desired approximation \eqref{eq:transmission_1}.

While this already yields pointwise spatial $C^{1,\alpha}$  at each fixed time up to each side of the interface, the bound $|\partial_t a_n^\pm| \le C \lambda^{-1} r^{\alpha-2}$ is not enough to  control the slopes of the approximating planes which can tilt and oscillate wildly as $r \to 0$. To overcome this problem we need to bootstrap higher regularity.

For this, we make use of the readily established regularity in the $e_n$ direction. Since any tangential derivative $\partial_{x_i} u$  ($i<n$) solves the same transmission problem, we can apply the $C^{1,\alpha}$ estimates directly to $\partial_{x_i} u$. This implies that its full spatial gradient is $C^\alpha$. In particular, both the purely tangential derivatives $\partial_{ij}^2 u$ and the mixed derivatives $\partial_{in}^2 u$ are H\"older continuous, satisfying
\begin{align} \label{eq:mixedbound2}
 |\partial_{in}^2 u (0,x_n,\lambda t) - \partial_{in}^2 u (0,y_n,\lambda s)| \leq  C\left(|x_n-y_n|^\alpha + |t-s|^{\alpha/2}\right).
\end{align}
This in turn improves the regularity of $h^\pm$ in \eqref{eq_bounds_on_h}.
Indeed, we can show that 
\begin{equation}\label{eq:bound_h111}
    [h^\pm]_{C^\alpha\left((0,3/4)^\pm \times (-\frac{9}{16}\lambda^{-1},0] \right)} \leq C.
\end{equation}
Recall that in the rescaled variables, 
$$
h^\pm(x_n, t) = \sum_{(i,j) \neq (n,n)} A_{ij}^\pm(\lambda t) \partial_{ij}^2 u(0,x_n,\lambda t) + \lambda \hat{\gamma}(\lambda t) \cdot \nabla' u(0,0,\lambda t).
$$
Regarding the coefficients, assumptions \eqref{Assump1} ensure that  $A_{ij}^\pm(\lambda t)$ and $\hat{\gamma}(\lambda t)$ are Lipschitz continuous. 
Since $\nabla' u$ is already known to be regular, $h^\pm$ is a finite sum of products of bounded, H\"older continuous functions. This implies \eqref{eq:bound_h111}, which allows us to apply the second part of Lemma \ref{lm:reg1d} and obtain that 
$$
\|v\|_{C^{2,\alpha}\left( (0,1/\sqrt{2})^\pm \times (-(2\lambda)^{-1},0] \right)} \leq C.
$$
This in particular implies that 
$$|\p_{nn}^2 v(x_n, t)| \leq C \quad \text{ in } (0,1/\sqrt{2})^\pm \times (-(2\lambda)^{-1},0],$$
which gives $|\p_{nn}^2 u(x',x_n,t)| \leq C$ in $\C_{1/2}^\pm$ by repeating the same argument for different $x'$. Combining this with \eqref{eq:mixed_bounds} and  \eqref{eq:mixedbound2} leads to \eqref{eq:C11}.

Lemma \ref{lm:reg1d} also implies that $\p_{nn}^2 u$ and $\p_t u$ are H\"older continuous. This establishes that $u$ is indeed a classical solution to \eqref{eq:main1}. As a byproduct also the regularity of $a_n^\pm$ improves and repeating the time-averaging procedure implies that \eqref{bound_on_an'} holds with $\alpha+1$ instead of $\alpha$ thus giving $ |\p_t a_n^\pm| \leq C \lambda^{-1} r^{\alpha-1}$.

 \end{customproof}

\section{Estimates for the 1D case}\label{sec:1dcase}
We are interested in studying the regularity of the transmission problem \eqref{eq:1d_transmission_redu},  which reduces to a one-dimensional spatial problem in the  variables $x,t$ in $\mathbb{R}^2$. In this section we denote any derivative in space and time of a function $v$ respectively with $\p_x v$ (or $\p_{xx}^2 v$) and $\p_t v$. 

In this setting the parabolic cylinders are defined as
\[
\P_r:= (-r,r) \times (-r^2,0], \quad \P_r^{\pm}:= (0,r)^\pm \times (-r^2,0], 
\]
 where we used the notation $(0,r)^{\pm}$ to denote $(0,r)$ or $(-r,0)$ respectively. Here, for a nonnegative integer $k$, we denote by $C_{x,t}^{k,{ \alpha}}$ the H\"older spaces induced by the parabolic distance.
 
 We recall the scaling properties of \eqref{eq:1d_transmission_redu}. If $w$ solves \eqref{eq:1d_transmission_redu} in $\P_\rho$ then the rescaling
\[
\tilde w (x,t):= \rho^{-\sigma} w(\rho x, \rho^2 t)
\]
solves \eqref{eq:1d_transmission_redu} in $\P_1$ with coefficients
\begin{equation} \label{eq:coeff_rescale}
    \tilde q^{\pm}(t):= q^{\pm}(\rho^2 t), \quad \tilde h^{\pm}(x,t)= \rho^{2-\sigma} h^{\pm}(\rho x, \rho^2 t), \quad \tilde \lambda = \rho \lambda, \quad \tilde\gamma^{\pm}(t) = \gamma^{\pm}(\rho^2 t).
\end{equation}
As long as $\sigma \leq 1+{ \alpha}$, the hypotheses on the coefficients are preserved and $\tilde \lambda \to 0$ as $\rho \to 0$.

We start with a preliminary result about the H\"older regularity of solutions to \eqref{eq:1d_transmission_redu}.
\begin{Lemma} \label{lm:holder_reg1d}
    Let $v$ be a viscosity solution to \eqref{eq:1d_transmission_redu} in $\P_1$ with $\lambda \leq 1$ and the coefficients satisfy the bounds in \eqref{eq_bounds_on_h}.
    Then 
    \begin{equation} \label{eq:holder1d}
        \| v\|_{C_{x,t}^{0,\beta}\left( \overline{\P}_{1/2}^{\pm}\right)} \leq C \left( \| v\|_{L^{\infty}(\P_1)} +1 \right),
    \end{equation}
    for some $\beta >0$
    \end{Lemma}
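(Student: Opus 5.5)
The plan is to obtain H\"older continuity from an oscillation decay at every scale. Near the interface the decay will come from a one-dimensional, parabolically scaled version of the Harnack argument of Section~\ref{sec:harnack}; away from the interface it will come from classical interior estimates. The key observation is that \eqref{eq:1d_transmission_redu} is the $n=1$ case of \eqref{eq:main1} written in the time variable $\lambda t$. Hence $\tilde v(x,t):=v(x,t/\lambda)$ lies in the class $S^*_{\lambda,T}(f^\pm)$ (with $n=1$, no tangential term) with $|f^\pm|\le K\lambda^{-1}\cdot\lambda|x|^{\alpha-1}$ after the time change. The source $h^\pm$ is not bounded, however, so Theorem~\ref{Lm:Harnack} cannot be applied verbatim. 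I would therefore first remove the singular part of the source by hand. Let $H^\pm(x)$ be the explicit even-in-$x$ profile $H(x)=\frac{2K^2}{\alpha(1+\alpha)}|x|^{1+\alpha}$. It satisfies $q^\pm\partial_{xx}H\ge K|x|^{\alpha-1}\ge |h^\pm|$ and $\partial_x H^\pm(0)=0$. Consequently $v+H$ and $v-H$ are, respectively, a supersolution and a subsolution of the homogeneous transmission problem, and the interface condition is unchanged because $\partial_xH$ vanishes at $0$. This places $v$ between a sub- and a supersolution in the extremal classes \eqref{eq:subsol}--\eqref{eq:supersol} with $f^\pm=0$, up to an additive error of size $\|H\|_{L^\infty(\P_\rho)}\lesssim\rho^{1+\alpha}$ on $\P_\rho$.

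The next step is the oscillation decay in parabolic cylinders centred on the interface. Fix $\rho\le1/2$ and rescale as in \eqref{eq:coeff_rescale} with $\sigma=0$, setting $\tilde w(x,t)=v(\rho x,\rho^2t)$. This function solves the same problem with $\tilde\lambda=\rho\lambda\le1$ and $|\tilde h|\le K\rho^{1+\alpha}|x|^{\alpha-1}$. In the variables of \eqref{eq:1d_transmission_redu} (time already multiplied by $\lambda$), the interface condition reads $\partial_t v=\lambda(\cdots)$. Since $\lambda\le1$, the gain mechanism of Lemma~\ref{Lm:Barrier_improvement} runs over a unit time window instead of a window of length $\lambda$. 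I would redo the barrier argument of Lemma~\ref{Lm:Barrier_improvement} in this scaling: the Weak Harnack inequality (Lemma~\ref{Lm:WeakHarnackIneq}) on one side, transfer across $\{x=0\}$ by the barrier $\xi$ of Step 2 (which uses only the interior equation and the value on the interface), and then the narrow-region barriers. The interface inequalities there only require the sign structure of $\gamma^\pm$ multiplied by $\lambda$. These inequalities become weaker as $\lambda\to0$, and they remain valid since the barriers satisfy them with room. The outcome is $\osc_{\P_{\rho/2}}v\le(1-c)\osc_{\P_\rho}v+C\rho^{1+\alpha}$ with $c$ independent of $\lambda$ and $\rho$. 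Iterating this gives $\osc_{\P_\rho(0,t_0)}v\le C\rho^{\beta}(\|v\|_{L^\infty}+1)$.

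To conclude, I would combine the two regimes exactly as in the proof of Proposition~\ref{Prop:holder}. For a point $(x_0,t_0)\in\overline{\P}^\pm_{1/2}$ and a radius $r$, if $r\ge|x_0|/2$ the cylinder $\P_r(x_0,t_0)$ is contained in $\P_{3r}(0,t_0)$ and the interface decay applies. If $r<|x_0|/2$, I use interior parabolic H\"older estimates for $\partial_t v=q^\pm\partial_{xx}v+h^\pm$ on $\P_{|x_0|/2}(x_0,t_0)$, where $|h|\le C|x_0|^{\alpha-1}$. Rescaling to unit size makes the source of size $|x_0|^{1+\alpha}$. This yields oscillation $\le C(r/|x_0|)^{\beta}\osc_{\P_{|x_0|/2}}v+C|x_0|^{1+\alpha}$, and the interface bound at scale $|x_0|$ closes the estimate.

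I expect the main obstacle to be the uniformity in $\lambda$ of the interface Harnack step. Lemma~\ref{Lm:Barrier_improvement} was designed for hyperbolic cylinders, where the interface gain occurs over time $\lambda$, whereas here we need parabolic cylinders with a small interface coefficient. I would first check that Step 2 of that proof (the barrier $\omega$ built from $\xi$) is insensitive to $\lambda$. If it is, the degenerate case $\lambda\to0$, where the trace is nearly frozen in time, is handled purely by interior Harnack with Dirichlet-type data on $\{x=0\}$, and the proof of Lemma~\ref{lm:lipschitz_trace} gives the continuity of that trace in time.
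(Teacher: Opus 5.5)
Your removal of the singular source by $H(x)=\frac{2K^2}{\alpha(1+\alpha)}|x|^{1+\alpha}$ is fine, and so is your patching of interface and interior cylinders. One correction: since $q^\pm\partial_{xx}H\ge|h^\pm|$, it is $v+H$ that is the subsolution and $v-H$ the supersolution, not the reverse.

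The gap is in the central step. A $\lambda$-uniform decay $\osc_{\P_{\rho/2}}v\le(1-c)\osc_{\P_\rho}v+\dots$ cannot come from rerunning Lemma \ref{Lm:Barrier_improvement}. In the variables of \eqref{eq:1d_transmission_redu} the interface condition is $\partial_t v=\lambda(\gamma^+\partial_xv^+-\gamma^-\partial_xv^-)$. The barriers of that lemma must \emph{increase} on $\{x=0\}$: there $\phi=\eta>0$, and the coefficient of $\phi$ in $z$ grows in time. A subsolution can rise on the interface only at rate $O(\lambda)$ per unit parabolic time. So the interface inequality becomes more restrictive as $\lambda\to0$, not weaker as you claim.

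Put differently, Theorem \ref{Lm:Harnack} applied to $v(x,t/\lambda)$ is a statement on $Q_1\times(-\lambda^{-1},0]$ in your variables. Compressed into $\P_1$, it yields a gain of order $\lambda$ only. Rescaling to $\P_\rho$ gives $\tilde\lambda=\rho\lambda$, so you would get factors $1-c\lambda 2^{-k}$. Their product stays bounded away from zero, so there is no decay at all.

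In fact no measure-to-infimum Harnack inequality holds uniformly in $\lambda$ here. For $\lambda=0$, take $v\equiv0$ on $\{x\le0\}$ and, on $\{x>0\}$, a positive caloric function vanishing on $\{x=0\}$. This is a solution with $v\ge\frac14$ on most of $\P_1^+$, yet $\inf_{\P_{1/2}}v=0$. So the measure dichotomy of Lemma \ref{Lm:Harnack1} is the wrong dichotomy.

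The missing idea is that small $\lambda$ helps rather than hurts. The trace is nearly frozen in time, so oscillation decays because the supremum drops: each side is a heat equation with almost constant Dirichlet data. The relevant dichotomy is on the sign of $v(0,-1)$.

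The paper does this with a single barrier. After normalizing $|v|\le1$ and $\lambda\le\lambda_0$, set
\[
F=C_1g(|x|,t+1)+\tfrac14(t+1)^{1/2}-C_2|x|^{1+\alpha},
\]
with $g$ from \eqref{eq:construction_g}.
\begin{itemize}
\item \emph{Interior.} Because $\partial_{xx}g\le0$ for $x>0$ and $q^\pm\ge K^{-1}$, the $g$ part is a supersolution. The $|x|^{1+\alpha}$ term absorbs $h^\pm$.
\item \emph{Interface.} On $\{x=0\}$, $\partial_tF=\frac18(t+1)^{-1/2}$ dominates $2\lambda KC_1C_3(t+1)^{-1/2}$ once $\lambda_0$ is small.
\item \emph{Conclusion.} If $v(0,-1)\le0$, comparison gives $v\le F\le\frac12$ on $\P_\rho$; otherwise apply the argument to $-v$. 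Rescaling only decreases $\lambda$, so this iterates.
\end{itemize}

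Your last sentence points in this direction: the first claim in the proof of Lemma \ref{lm:lipschitz_trace} is essentially this barrier. But you present it as a fallback, conditioned on checking the $\xi$-transfer, which is the wrong check. It has to replace the Harnack transfer rather than supplement it.
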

\begin{proof} After an initial dilation, we may assume that $\| v\|_{L^{\infty}(\P_1)}\leq 1$ and $\lambda \leq \lambda_0$ is small. 
 For parabolic cylinders centered inside $\P_1^{\pm}$ we obtain H\"older estimates directly from regularity of parabolic equations.
 We concentrate our analysis to show a diminishing oscillation property for $v$ for points on the axis $\{x=0\}$.
  Without loss of generality we concentrate on the regularity around the point $(0,0)$.
  More precisely, it is enough to prove that for a small universal $\rho$ we have $\osc_{\P_\rho} v \leq \frac{3}{2}$.
  
Consider the function $g$, introduced in Lemma \ref{lm:lipschitz_trace} to be the function solving \eqref{eq:construction_g}. We then define
    \[
    F(x,t):=C_1g(|x|,t+1)+ \frac{1}{4}(t+1)^{1/2}-C_2|x|^{1+{ \alpha}}.
    \]
     By choosing appropriately the constants $C_1,C_2$ we want to show that $F$ is a supersolution to \eqref{eq:1d_transmission_redu} that lies above $v$ on $\p_p \P_1$. Thus, by Theorem \ref{Thm:CP}, we deduce that $v \leq F$ in $\P_1$  and $F(0,0)=\frac{1}{4}$. Hence if $v(0,-1) \leq 0$ then $v \leq 1/2$ in $\P_\rho$ with $\rho=c_0$ sufficiently small, implying $\osc_{\P_\rho} v \leq \frac{3}{2}$. 
    Expressing  explicitly $g$ as
    \[
    g(x,t)=(4K^{-1}\pi t)^{-1/2} \int_{\mathbb{R}} e^{-K|x-y|^{2}/4t} g(y,0) \, dy,
    \]
    we deduce the following
    \begin{equation} \label{eq:properties_g}
    g(0,t)=0 \; \quad \p_x g(0,t)\leq C_3 t^{-1/2} \; \text{for }t>0, \text{ and}  \quad \p_t g \leq 0 \; \text{for }x>0.
    \end{equation}
    Using the last of these properties, in $\P_1^\pm$ we have
\begin{align*}
        \p_t F&=K^{-1} \p_{xx}^2 F+ { \alpha}(1+{ \alpha})K^{-1}C_2|x|^{{ \alpha}-1} + \frac{1}{8}(t+1)^{-1/2} \\
        &\geq q^{\pm}(t)  \p_{xx}^2 F + h^{\pm}(x,t)
    \end{align*}
   by choosing $C_2$ sufficiently large.
On $\{ x=0\}$, the first two properties in \eqref{eq:properties_g}, together with  $\lambda \leq \lambda_0$ small, imply that 
\begin{align*}
    \p_t F(0,t)&=\frac{1}{8}(t+1)^{-1/2} \geq 2 \lambda K C_1 C_3 (t+1)^{-1/2} \\
    &\geq \lambda \left( \gamma^{+}(t) \p_x F^+(0,t) - \gamma^{-}(t) \p_x F^-(0,t) \right).
\end{align*}
Furthermore, by choosing $C_1>C_2$ sufficiently large we can make $F>1\geq v$ on $\p_p \P$ and obtain the desired improvement of oscillation.  

\end{proof}

The next result deals with higher regularity.

\begin{Lemma} \label{lm:reg1d}
    Let $v$ be a viscosity solution to \eqref{eq:1d_transmission_redu} in $\P_1$ with $\lambda \leq 1$ and the coefficients satisfy the bounds in \eqref{eq_bounds_on_h}.
    Then 
    \begin{equation} \label{eq:c1a_1d}
        \| v\|_{C_{x,t}^{1,{ \alpha}}\left( \overline{\P}_{1/2}^{\pm}\right)} \leq C \left( \| v\|_{L^{\infty}(\P_1)} +1 \right),
    \end{equation}
    and the transmission condition is satisfied in a classical sense.
    Moreover, if in addition we have 
    \[
    \| h^\pm \|_{C_{x,t}^{0,{ \alpha}}(\P_{3/4})} +  \| \gamma^\pm \|_{C_{t}^{0,{ \alpha/2}}(\P_{3/4})}   \leq K,   \]
    then
       \begin{equation} \label{eq:c2a_1d}
        \| v\|_{C_{x,t}^{2,{ \alpha}}\left( \overline{\P}_{1/2}^{\pm}\right)} \leq C \left( \| v\|_{L^{\infty}(\P_1)} +1 \right).
    \end{equation}

\end{Lemma}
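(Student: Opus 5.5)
We prove \eqref{eq:c1a_1d} by an improvement-of-flatness iteration at interface points, combined with interior and boundary estimates away from $\{x=0\}$. Normalize $\|v\|_{L^\infty(\P_1)}\le 1$. The main point is the parabolic scaling \eqref{eq:coeff_rescale} with $\sigma=1+\alpha$: it preserves the bounds \eqref{eq_bounds_on_h} and sends $\lambda\to\rho\lambda$. So at small scales the transmission condition degenerates to $\p_t v\approx 0$ on $\{x=0\}$, and the trace $v(0,t)$ becomes almost constant in time.

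\emph{Step 1 (compactness and limiting problem).} Suppose the normalized solutions satisfy $\lambda_k\to0$ and $\|h_k^\pm\|_{L^\infty}\to 0$ in the weighted sense $|h_k^\pm|\le \eps_k|x|^{\alpha-1}$. By Lemma \ref{lm:holder_reg1d} they are equicontinuous up to $\{x=0\}$ from both sides, so a subsequence converges uniformly to some $v_\infty$. By stability, $v_\infty$ solves $\p_t v_\infty=q^\pm_\infty(t)\p^2_{xx}v_\infty$ on each side with the trace condition $\p_t v_\infty(0,t)=0$. Thus $v_\infty$ solves two independent Dirichlet problems with constant boundary datum $v_\infty(0,0)$, and the coefficients $q^\pm_\infty$ are Lipschitz in $t$. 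Classical boundary Schauder estimates for the heat equation on a half-line then give
\[
|v_\infty-(b+a^\pm x)|\le C|x|^{2}+C|t|\quad\text{in } \P_r^\pm .
\]

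\emph{Step 2 (improvement of flatness).} We show there exist $\rho$, $\lambda_0$, $\eps_0$ such that if $\lambda\le\lambda_0$ and $|h^\pm|\le\eps_0|x|^{\alpha-1}$, then $v$ is approximated in $\P_\rho^\pm$ by $\ell^\pm=b(t)+a^\pm x$ within $\rho^{1+\alpha}$. Here $b$ solves $\p_t b=\lambda(\gamma^+a^+-\gamma^-a^-)$, so that $\ell^\pm$ satisfies the transmission condition exactly and $v-\ell^\pm$ solves the same problem. The proof is by contradiction using Step 1. The hypotheses of the general case reduce to this one by an initial dilation, which makes $\lambda$ and the $h$-norm small. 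Subtracting $\ell^\pm$, rescaling with $\sigma=1+\alpha$, and iterating, the slopes $a_k^\pm$ form a Cauchy sequence with $|a^\pm_{k+1}-a^\pm_k|\le C\rho^{k\alpha}$. The correction in $b$ is of order $\lambda\rho^{k}$, which is harmless. This yields the pointwise $C^{1,\alpha}$ expansion at every interface point, with one-sided derivatives $\p_xv^\pm(0,t)=\lim a_k^\pm$.

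\emph{Step 3 (global estimate and classical transmission).} We combine the pointwise expansions at interface points with interior estimates in parabolic cylinders of radius comparable to $|x|$. Away from the interface the equation has right-hand side of size $|x|^{\alpha-1}$, which scales correctly. The standard patching argument then gives \eqref{eq:c1a_1d}. The limits in Step 2 also give $|b_k(t)-v(0,t)|=o(|t|)$, together with the ODE for $b$. Using continuity of $\p_x v^\pm$ along the interface, this shows $\p_t v(0,t)$ exists and equals $\lambda(\gamma^+\p_xv^+-\gamma^-\p_xv^-)$ classically.

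\emph{Step 4 ($C^{2,\alpha}$).} Given $C^\alpha$ data, the trace $\beta(t):=v(0,t)$ satisfies $\beta'=\lambda(\gamma^+\p_xv^+-\gamma^-\p_xv^-)\in C^{\alpha/2}_t$ by Step 3. Hence $\beta\in C^{1,\alpha/2}_t$, which is exactly $C^{2,\alpha}$ parabolic regularity of the Dirichlet datum. Boundary Schauder estimates on each half-cylinder with $C^\alpha$ right-hand side $h^\pm$ and Lipschitz $q^\pm$ give \eqref{eq:c2a_1d}.

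The main obstacle is Step 2. The compactness must handle the singular source $|x|^{\alpha-1}$ uniformly up to the interface, which requires using Lemma \ref{lm:holder_reg1d} with barriers like $C_2|x|^{1+\alpha}$. The limiting problem must also be identified correctly in the viscosity sense at the interface, in particular that the condition becomes $\p_t v_\infty=0$ rather than a condition involving the normal derivatives.
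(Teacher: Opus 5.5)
Your Steps 1--2 and the interior patching match the paper's proof. This covers compactness to the limit problem with $\p_t\bar v=0$ on $\{x=0\}$, the one-step improvement with $\ell_k^\pm=a_k^\pm x+b_k(t)$ where $b_k$ solves the transmission ODE, and interior $C^{1,\alpha}$ estimates with right-hand side of size $|x|^{\alpha-1}$.

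\textbf{The gap is in Step 3.} At an interface point $(0,t_0)$ the iteration only yields $|v(0,t)-b_k(t)|\le\rho_k^{1+\alpha}$ for $t_0-\rho_k^2<t\le t_0$. Hence you only get $|v(0,t)-b_\infty(t)|\le C|t-t_0|^{(1+\alpha)/2}$. Since $(1+\alpha)/2<1$, this is not $o(|t-t_0|)$. Time has parabolic degree two, so a $C^{1,\alpha}$ expansion never determines $\p_t v$. Knowing that the limiting trace is constant only improves the constant in front of $\rho_k^{1+\alpha}$, not the exponent. So your claim $|b_k(t)-v(0,t)|=o(|t|)$ is unsupported, and the classical transmission condition is not proved. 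Step 4 inherits the gap, because it needs $\beta'=\lambda(\gamma^+\p_xv^+-\gamma^-\p_xv^-)$ to place $\beta$ in $C^{1,\alpha/2}_t$.

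\textbf{What is missing} is a separate comparison argument on the interface. It combines the viscosity definition with the one-sided $C^{1,\alpha}$ regularity you already have; this is the paper's Step 2. Suppose that along $t_k\to t_0^-$ the backward difference quotients of $v(0,\cdot)$ stay below
$\mu:=\lambda\big(\gamma^+\p_xv^+-\gamma^-\p_xv^-\big)(0,t_0)-2K\eta$. On $[-c(\eta),c(\eta)]\times[t_k,t_0]$ take
\[
g(x,t)=v(0,t_0)+\mu(t-t_0)+x\big(\p_xv^\pm(0,t_0)\mp\tfrac{\eta}{2}\big)+C|x|^{1+\alpha}\quad\text{for }\pm x\ge 0 .
\]
The argument then runs as follows.
\begin{itemize}
\item Using $\lambda\le 1$ and the continuity of $\gamma^\pm$, $g$ is a strict subsolution of the transmission condition near $(0,t_0)$.
\item The one-sided expansions of $v$, together with the continuity of $\p_xv^\pm(0,\cdot)$, force the minimum in $x$ of $v-g$ at each time to lie on $\{x=0\}$, once $c(\eta)$ is small.
\item By the contradiction hypothesis $v-g>0$ at $(0,t_k)$, while $v-g=0$ at $(0,t_0)$.
\item Hence a vertical translate of $g$ touches $v$ from below at some $(0,t^*)$ with $t^*\in(t_k,t_0]$. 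This contradicts the viscosity supersolution property of $v$ there.
\end{itemize}
The reverse inequality is symmetric. Therefore the left time derivative of the trace exists everywhere and equals the continuous flux, so the trace is $C^1$ and the condition holds classically. Only after this step does your Step 4 go through.
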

\begin{proof} We split the proof in three steps. 

\smallskip
\underline{ \it Step 1: $C^{1,{ \alpha}}$ estimates.} In order to achieve \eqref{eq:c1a_1d}, we establish a $C^{1,{ \alpha}}$ estimate at the origin from both sides. After an initial dilation and up to dividing by a large constant, we may assume that $\lambda \leq \delta$, $|h^\pm| \leq \delta |x|^{{ \alpha}-1}$ and $\|v \|_{L^{\infty}(\P_1)} \leq \delta$, for $\delta$ small to be fixed later.
We start with the following one-step improvement claim.

\textit{Claim.} Assume that there exist affine functions
$$
\ell^\pm_k(x, t) = a^\pm_k x + b_k(t)
$$
satisfying the transmission condition
\begin{equation} \label{eq:transmi1}
\p_t b_k(t) = \lambda(\gamma^+(t)a^+_k - \gamma^-(t)a_k^-) \quad  \text{with $|a^\pm_k| \le A_0$.}
\end{equation}
Suppose that $\ell^\pm_k$ approximate $v$ in $\P^\pm_\rho$ up to order $1+{ \alpha}$, namely
\[\|v^\pm - \ell^\pm_k\|_{L^\infty(\P^\pm_\rho)} \le \rho^{1+{ \alpha}}, \quad \text{for some } \rho \le 1.
\]
Then, provided $\delta > 0$ is sufficiently small, there exist new affine functions $\ell^\pm_{k+1}(x, t) = a^\pm_{k+1} x + b_{k+1}(t)$ whose coefficients satisfy \eqref{eq:transmi1}, such that
\[
\|v^\pm - \ell^\pm_{k+1}\|_{L^\infty(\P^\pm_{c_1\rho})} \le (c_1\rho)^{1+{ \alpha}},
\]
and $|a^\pm_{k+1} - a^\pm_k| \le C\rho^{ \alpha}$, where $c_1 \in (0, 1)$, $A_0 \ge 1$ and $C > 0$ are universal constants.

Notice that once this claim is proven, the desired $C^{1,{ \alpha}}$ estimate at the origin from both sides follows by iterating it indefinitely for $\rho_k = \rho c_1^k$. The base case at $\rho_0 = 1$ holds starting with $\ell^\pm_0 \equiv 0$, since our initial normalization guarantees $\|v\|_{L^\infty(\P_1)} \le \delta \le \rho_0^{1+\alpha}$.

\textit{Proof of the Claim.} We prove this one-step improvement by a compactness argument. 

It is not difficult to see that, since 
\[
\p_t \ell_k^\pm = \p_t b_k(t) = \lambda \left( \gamma^{+}(t) a_k^+ - \gamma^{-}(t) a_k^- \right) \leq 2\lambda K A_0 \leq C\delta,
\]
the function $v^\pm - \ell_k^\pm$ solves \eqref{eq:1d_transmission_redu} with a modified $h^{\pm}$ satisfying $|h^{\pm}| \leq \delta |x|^{{ \alpha}-1}+ C\delta$. This implies that the rescaled error function
\begin{equation} \label{eq:def_v_tilde}
\tilde v(x,t) = \rho^{-(1+{ \alpha})} (v^\pm-\ell_k^\pm)(\rho x,\rho^2 t)
\end{equation}
solves  \eqref{eq:1d_transmission_redu} with coefficients as in \eqref{eq:coeff_rescale}. Notice that $\tilde v$ is continuous across $\{x=0 \}$ and since $\| \tilde  v\|_{L^{\infty}(\P_1)} \leq 1$ then, by Lemma \ref{lm:holder_reg1d}, we also know that 
\[
 \| \tilde v\|_{C_{x,t}^{0,\beta}(\P_{\frac{1}{2}})} \leq C.
\]
Arguing by compactness, we know that for every $\varepsilon_1>0$ there exists a small $\delta_1>0$ such that if $\delta<\delta_1$, then there exists a function $\bar v$, solution to
\begin{equation} \label{eq:1d_transmission_redu_limit}
    \begin{cases}
\p_t \overline{v} = \overline{q}^{\pm} \p_{xx}^2 \overline{v} \quad &\text{ in } \P_{1/2}^\pm, \\
        \p_t \overline{v}= 0  \quad &\text{ on } \{x= 0 \},
    \end{cases}
\end{equation}
with $\overline{q}^\pm=q^\pm(0) \in (K^{-1},K)$, and satisfying
\[
\|\tilde v-\bar v\|_{L^\infty(\P_{1/2})}\leq \varepsilon_1.
\]
Since $\overline{v}$ is constant on $\{x=0 \}$, one can apply $C^2$ parabolic estimates in both sides and obtain
\[
|\overline{v}^{\pm} - (\overline{a}^\pm x + \overline{b}) |\leq C \tau^{2} \leq \frac{1}{8} \tau^{1+{ \alpha}}    \quad \text{ in } \P_{\tau}^{\pm},
\]
for $\tau \leq c_1$ small universal. Combining both estimates and choosing $\varepsilon_1=\frac{1}{8} \tau^{1+{ \alpha}}$, which also fixes $\delta_1$ universally, we obtain
\[
|\tilde v^\pm - (\overline{a}^\pm x + \overline{b}) | \leq \frac{1}{4} \tau^{1+{ \alpha}}    \quad \text{ in } \P_{\tau}^{\pm}.
\]
Rescaling back to $v$ and using the definition of $b_k(t)$, we get 
\[
|v^{\pm} - \left(a_{k+1}^\pm x +b(t) \right)| \leq \frac{1}{2} (\tau \rho)^{1+{ \alpha}}    \quad \text{ in } \P_{\rho\tau}^{\pm}, 
\]
with
\[
a_{k+1}^\pm := a_k^\pm + \rho^{{ \alpha}} \overline{a}^\pm, \quad b(t) := b_k(t)+ \rho^{1+{ \alpha}} \overline{b}.
\]
We now slightly perturb $b$ so that the approximating functions satisfy the transmission condition. For this, we define $b_{k+1}(t)$ such that
\[
\p_t b_{k+1}(t)=\lambda \left( \gamma^+(t) a_{k+1}^+ - \gamma^-(t) a_{k+1}^-\right), \quad b_{k+1}(0)=b(0).
\]
Defining $\ell_{k+1}^\pm(x,t):=a_{k+1}^\pm x + b_{k+1}(t)$ we check that it satisfies the hypotheses of the claim. In fact,
by definition of $a_{k+1}^\pm$ we have $|a_{k+1}^\pm - a_k^\pm| \leq  C \rho^{ \alpha}$ and 
\begin{align*}
    |\p_t (b_{k+1}-b)(t)|&= \lambda |\gamma^+(t)(a_{k+1}^+ - a_k^+) - \gamma^-(t)(a_{k+1}^- - a_k^-)| \\
    &\leq 2 \lambda K \rho^{ \alpha} |\overline{a}| \leq C\lambda \rho^{{ \alpha}},
\end{align*}
implying that for $t \in (-(\tau\rho)^2,0]$ 
\[
|(b_{k+1}-b)(t)| \leq \|\p_t (b_{k+1}-b)\|_{L^\infty} |t| \leq C \lambda \rho^{{ \alpha}} (\tau \rho)^{2} \leq \frac{1}{2} (\tau \rho)^{1+{ \alpha}}
\] 
where $\rho \leq \delta$ is sufficiently small. 
We can now conclude that $\ell_{k+1}^\pm$ approximates $v$ from both sides at the origin with order $1+{ \alpha}$ in $\P_{\tau \rho}$, namely
\[
|v^\pm - \ell_{k+1}^\pm | \leq |v^\pm -(a_{k+1}^\pm x +b)| + |b-b_{k+1}| \leq (\tau \rho)^{1+{ \alpha}} \quad \text{in }\P_{\tau \rho}^{\pm},
\]
which implies the desired claim. The same reasoning can be applied to any point on $\{ x= 0\} $.

For points in the interior of $\P_{1/2}^\pm$, notice that the bound on $h^\pm$ implies that for all $r \leq 1$,
\[ \fint_{\C_{r}} |h^{\pm}| \, dx \leq C r^{{ \alpha}-1},
\]
which is the condition on the right-hand side appearing in \cite[Theorem 1.3]{wang1992regularityII} to get $C^{1,{ \alpha}}$.

\smallskip
\underline{ \it Step 2: Regularity on the interface.} $\;$ 
We prove that the interface condition in \eqref{eq:1d_transmission_redu}  holds in the classical sense.
Assume by contradiction that there exist a sequence $t_k \to 0^-$ and an $\eta > 0$ such that 
\begin{equation} \label{eq:case_1_d}
    \frac{v(0,t_k)-v(0,0)}{t_k} < \mu:= \lambda \big(\gamma^+(0)\partial_x v^+(0,0) - \gamma^-(0)\partial_x v^-(0,0)\big) - 2K\eta.
\end{equation}

We construct a strict comparison subsolution $g(x,t)$ in a neighborhood $D_k = [-c(\eta), c(\eta)] \times [t_k, 0]$. Consider the function $g$ to be defined as
\[
g(x,t) := v(0,0) + \mu t + 
\begin{cases}
x\left(\partial_x v^+(0,0) - \frac{\eta}{2}\right) + C x^{1+\alpha} & \text{if } x \ge 0 \\
x\left(\partial_x v^-(0,0) + \frac{\eta}{2}\right) + C |x|^{1+\alpha} & \text{if } x < 0
\end{cases}
\]

Choosing $C$ large, it is not difficult to impose that the function $g$ is a strict classical subsolution in $D_k$ in $x \neq 0 $ using the bound on $q^\pm\geq K^{-1}$ given in \eqref{eq_bounds_on_h}. On $\{x=0\}$ our choice of $\mu$ (which includes the $-2K\eta$ penalization to absorb coefficient oscillations), yields, after evaluating the fluxes,
$$
\partial_t g(0,t) = \mu < \lambda \big( \gamma^+(t) \partial_x g^+(0, t) - \gamma^-(t) \partial_x g^-(0, t) \big),
$$
satisfying the strict boundary condition.

We now translate $g$ vertically upwards to find the first contact point with $v$ in $D_k$. 
By choosing the domain width $c(\eta)$ sufficiently small, the strict inequalities $\partial_x g^+ < \partial_x v^+$ and $\partial_x g^- > \partial_x v^-$ ensure that the spatial minimum of $v-g$ occurs at $\{x=0\}$. Therefore, the contact point cannot lie in the interior. 
Furthermore, the contact cannot occur at $t=t_k$ because $v(0, t_k) - g(0, t_k) > 0$ by our initial contradiction hypothesis. 
Consequently, the first touch must occur at the interface $(0, t^*)$ for some $t^* \in (t_k, 0]$. This violates the definition of a viscosity solution, since $v$ cannot be touched from below by a strict subsolution on the interface.

The case when \eqref{eq:case_1_d} is reversed can be treated similarly. 

\smallskip
\underline{ \it Step 3: $C^{2,{ \alpha}}$ estimates.} 
On the hyperplane $\{x=0\}$, we have $\p_x v^\pm, \gamma^{\pm} \in C_t^{0,{\alpha}/2}$, and the interface condition implies that $v(0,t) \in C_t^{1,{\alpha}/2}$. Consequently, we can apply standard $C_{x,t}^{2,{\alpha}}$ Schauder estimates for the heat equation up to the interface from both sides.

\end{proof}

\section*{Acknowledgements}

The authors would like to thank Daniela De Silva and Ovidiu Savin for fruitful
conversations on the topic of this paper. 
D.G. and D.J. wish to thank the Department of Mathematics of Columbia University for the warm hospitality.

\section*{Fundings}
This research is partially supported by University of Bologna funds
\say{Attività di collaborazione con università del Nord America 2024 MAT} in the framework of the project: \say{Interplaying problems in analysis and geometry}. 

\bibliographystyle{abbrv}
\bibliography{bibStefan}
\Addresses
\end{document}